  \let\ifdraftdoc\if@draft
\renewcommand{\qedsymbol}{$\blacksquare$}
\newcommand{\openqedsymbol}{$\square$}
\newcommand{\exercisesymbol}{$\lozenge$}
\newcommand{\openproof}{\renewcommand{\qedsymbol}{\openqedsymbol}}
\newcommand{\exerproof}{\renewcommand{\qedsymbol}{\exercisesymbol}}
\tikzset{baseline=(current bounding box.center)}
\let\LiningNumbers\relax
\DeclareTextFontCommand{\textrmup}{\rmfamily\mdseries\upshape}
\DeclareTextFontCommand{\textbfup}{\rmfamily\bfseries\upshape}
\DeclareTextFontCommand{\textttup}{\ttfamily\mdseries\upshape}
\DeclareTextFontCommand{\textsfup}{\sffamily\mdseries\upshape}
\setlist[enumerate]{font=\upshape}
\DeclareMathAlphabet{\mathpzc}{OT1}{pzc}{m}{it}
\newcommand{\hairspace}{\ifmmode\mskip1mu\else\kern0.08em\fi}
\def\abbrdot{\@ifnextchar.{}{.\@\xspace}}
\newcommand{\etc}{etc\abbrdot}
\newcommand{\eg}{e.g\abbrdot}
\newcommand{\ie}{i.e\abbrdot}
\newcommand{\resp}{resp\abbrdot}
\newcommand{\Sect}{\S~}
\newcommand{\strong}[1]{\textbfup{#1}}
\renewcommand{\implies}{\ifmmode\Longrightarrow\else$\Rightarrow$\expandafter\xspace\fi}
\renewcommand{\iff}{\ifmmode\Longleftrightarrow\else$\Leftrightarrow$\expandafter\xspace\fi}
\let\internal@prime\prime
\renewcommand{\prime}{\ifmmode\internal@prime\else$\sp{\internal@prime}$\expandafter\xspace\fi}
  \def\nequiv{\not\equiv}
\newcommand{\blank}{\mathord{-}}
\newcommand{\embedinto}{\hookrightarrow}
\newcommand{\hoto}{\Rightarrow}
\DeclareMathSymbol{.}{\mathpunct}{letters}{"3A}
\DeclareMathSymbol{:}{\mathrel}{operators}{"3A}
\DeclareMathOperator{\ob}{ob}
\DeclareMathOperator{\iso}{iso}
\newcommand{\Ho}[1][]{\mathop{\mathrm{Ho}\ifstrempty{#1}{}{\sb{#1}}}}
\newcommand{\id}{\mathrm{id}}
\newcommand*{\lr}[3]{\mathopen{}\left#1{#2}\right#3\mathclose{}}
\newcommand*{\lmr}[5]{\mathopen{}\left#1{#2}\,\middle#3\,{#4}\right#5\mathclose{}}
\newcommand*{\ordlr}[3]{\mathord{\left#1{#2}\right#3}}
\newcommand*{\argp}[1]{\ifstrempty{#1}{}{\lr({#1})}}
\newcommand*{\argptwo}[2]{\lr({#1, #2})}
\newcommand*{\argptwotwo}[4]{\lr({#1, #2; #3, #4})}
\newcommand*{\argb}[1]{\ifstrempty{#1}{}{\lr[{#1}]}}
\newcommand*{\parens}[1]{\lr({#1})}
\newcommand*{\bracket}[1]{\lr[{#1}]}
\newcommand*{\tuple}[1]{\ordlr({#1})}
\newcommand*{\prodtuple}[1]{\lr<{#1}>}
\newcommand*{\setbuilder}[2]{\mathord{\lmr\{{#1}|{#2}\}}}
\newcommand*{\seqbuilder}[2]{\mathord{\lmr({#1}|{#2})}}
\newcommand*{\ul}[1]{\smash{\underline{#1}}\vphantom{#1}}
\newcommand*{\optargp}{\new@ifnextchar\bgroup{\argp}{}}
\newcommand*{\optargptwo}{\new@ifnextchar\bgroup{\expandafter\optargptwo@first}{}}
\newcommand*{\optargptwo@first}[1]{\new@ifnextchar\bgroup{\argptwo{#1}}{\argp{#1}}}
\newcommand*{\optargptwotwo}{\new@ifnextchar\bgroup{\expandafter\optargptwotwo@first}{}}
\newcommand*{\optargptwotwo@first}[2]{\new@ifnextchar\bgroup{\argptwotwo{#1}{#2}}{\argptwo{#1}{#2}}}
\newcommand*{\sbtwo}[2]{\sb{\lr({{#1}, {#2}})}}
\newcommand*{\optsb}{\new@ifnextchar\bgroup{\sb}{}}
\newcommand*{\optsp}{\new@ifnextchar\bgroup{\sp}{}}
\newcommand*{\optsbtwo}{\new@ifnextchar\bgroup{\expandafter\optsbtwo@first}{}}
\newcommand*{\optsbtwo@first}[1]{\new@ifnextchar\bgroup{\sbtwo{#1}}{\sb{#1}}}
\newcommand*{\set}[1]{\new@ifnextchar\bgroup{\setbuilder{#1}}{\ordlr\{{#1}\}}}
\newcommand*{\seq}[1]{\new@ifnextchar\bgroup{\seqbuilder{#1}}{\tuple{#1}}}
  \def\hash{\#}
\newcommand*{\powerset}{\mathscr{P} \optargp}
\newcommand{\upJ}{\mathrm{J}}
\newcommand{\upK}{\mathrm{K}}
\newcommand{\upM}{\mathrm{M}}
\newcommand{\uph}{\mathrm{h}}
\newcommand{\upv}{\mathrm{v}}
\newcommand{\mbfI}{\mathbf{I}}
\newcommand*{\smashsp}[2]{{#1}\sp{\smash{#2}}}
\newcommand*{\op}[1]{\smashsp{#1}{\mathrm{op}}}
\newcommand*{\ctchoice}[2]{%
  \ifdef{\ct@homstyle}%
  {#2}%
  {#1}}
\newcommand*{\ctchoicetwo}[5]{%
  \ifdef{\ct@homcatstyle}%
  {#2}{%
  \ifdef{\ct@transfstyle}%
  {#3}{%
  \ifdef{\ct@catstyle}%
  {#4}{%
  \ifdef{\ct@homstyle}%
  {#5}{%
  {#1}}}}}}
\newcommand*{\DefineCategory}[2]{\DeclareRobustCommand{#1}{\ctchoice{\mathbf{#2}}{\mathbf{#2}}}}
\newcommand*{\DefineBicategory}[2]{\DeclareRobustCommand{#1}{\ctchoicetwo{\mathfrak{#2}}{\mathfrak{#2}}{\mathfrak{#2}}{\mathbf{#2}}{\mathbf{#2}}}}
\newcommand*{\cat}[1]{%
  \begingroup%
  \def\ct@catstyle{cat}%
  #1%
  \endgroup}
\newcommand*{\bicat}[1]{%
  \begingroup
  \def\ct@bicatstyle{bicat}%
  #1%
  \endgroup}
\newcommand*{\Hom}[1][]{%
  \begingroup%
  \def\ct@homstyle{hom}%
  \ifstrempty{#1}{\mathrm{Hom}}{#1}%
  \endgroup%
  \optargptwo}
\newcommand*{\HHom}[1][]{%
  \begingroup%
  \def\ct@homcatstyle{homcat}%
  \ifstrempty{#1}{\mathbf{Hom}}{#1}%
  \endgroup
  \optargptwo}
\newcommand*{\hpty}[1][]{%
  \begingroup%
  \def\ct@transfstyle{transf}%
  \ifstrempty{#1}{\mathrm{Nat}}{#1}%
  \endgroup
  \optargptwotwo}
\DefineCategory{\Set}{Set}
\DefineCategory{\Simplex}{\Delta}
\DefineCategory{\SSet}{sSet}
\DefineCategory{\SSSet}{ssSet}
\DefineBicategory{\Qcat}{Qcat}
\DefineBicategory{\CSS}{CSS}
\newcommand{\Cat}{\ctchoicetwo{\mathfrak{Cat}}{\mathbf{Fun}}{\mathbf{Fun}}{\mathbf{Cat}}{\mathrm{Fun}}}
\newcommand{\Grpd}{\ctchoicetwo{\mathfrak{Grpd}}{\mathbf{Fun}}{\mathbf{Fun}}{\mathbf{Grpd}}{\mathrm{Fun}}}
\newcommand{\SCat}{\ctchoicetwo{\mathfrak{SCat}}{\mathbf{SFun}}{\mathbf{SFun}}{\mathbf{SCat}}{\mathrm{SFun}}}
\newcommand*{\Nv}[1][]{\mathbf{N}\sp{#1}\optargp}
\newcommand*{\nv}[1][]{\mathrm{N}\sp{#1}\optargp}
\newcommand*{\xHom}[3][]{\ifstrempty{#1}{\ordlr[{{#2}, {#3}}]}{\ordlr[{{#2}, {#3}}]\sb{#1}}}
\newcommand*{\ulHom}[1][]{\ifstrempty{#1}{\ul{\Hom}}{\ul{\Hom[#1]}}\optargptwo}
\newcommand*{\Func}[2]{\xHom{#1}{#2}}
\renewcommand*{\@makefnmark}{\hbox{\textsuperscript{\normalfont [\LiningNumbers\@thefnmark]}}}
\renewcommand*{\@makefntext}[1]{\parindent 1.0em\noindent\ifdefempty{\@thefnmark}{}{\hb@xt@-1.0em{\hss \normalfont [\LiningNumbers \@thefnmark]}\hspace{1.0em}}#1}
\newcommand{\footpar}[1]{\gdef\@thefnmark{}\@footnotetext{#1}}
\newcommand{\hangsecnum}{\def\@seccntformat##1{\llap{\csname the##1\endcsname\quad}}}
    \name{sortname}
    \name{author}
    \name{editor}
    \name{translator}
\declaretheorem[style=plain,parent=section,title=Theorem,refname=theorem,Refname=Theorem]{thm}
\declaretheorem[style=plain,sibling=thm,title=Proposition,refname=proposition,Refname=Proposition]{prop}
\declaretheorem[style=plain,sibling=thm,title=Lemma,refname=lemma,Refname=Lemma]{lem}
\declaretheorem[style=plain,sibling=thm,title=Corollary,refname=corollary,Refname=Corollary]{cor}
\declaretheorem[style=definition,sibling=thm,title=Definition,refname=definition,Refname=Definition]{dfn}
\declaretheorem[style=definition,sibling=thm,title=Example,refname=example,Refname=Example]{example}
\declaretheorem[style=definition,sibling=thm,title=Examples,refname=examples,Refname=Examples]{examples}
\declaretheoremstyle[style=remark,headfont=\scshape]{remark}
\declaretheorem[style=remark,sibling=thm,title=Remark,refname=remark,Refname=Remark]{remark}
\declaretheorem[style=plain,numbered=no,title=Theorem]{thm*}
\declaretheorem[style=plain,numbered=no,title=Proposition]{prop*}
\declaretheorem[style=remark,numbered=no,title=Remark]{remark*}
\declaretheorem[style=definition,numbered=no,title=Example]{example*}
\newcommand*{\makenumpar}[1][]{%
  \par%
  \refstepcounter{numpar}%
  {\normalfont{\bfseries\textparagraph}\hspace{0.5em}\thenumpar\ifstrempty{#1}{}{ ({\normalfont #1})}.} }
\renewcommand*{\bibnamedash}{}
\newcommand*{\authoryearpunct}{\hspace*{-\bibhang}\bibsentence}
\appto\citesetup{\LiningNumbers}
    \newline\setunit{\authoryearpunct}}%
    \newline\setunit{\authoryearpunct}%
    \newline\setunit{\authoryearpunct}%
\appto\bibsetup{\raggedright}
\appto\bibfont{\LiningNumbers\small}
\title{The homotopy bicategory of $\tuple{\infty, 1}$-categories}
\author{Zhen~Lin Low}
\date{2 November 2013}
\begin{document}

\maketitle
\footpar{Department of Pure Mathematics and Mathematical Statistics, University of Cambridge, Cambridge, UK. \textsc{E-mail address}: \texttt{Z.L.Low@dpmms.cam.ac.uk}}
\begin{abstract}
Evidence is given for the correctness of the Joyal--Riehl--Verity construction of the homotopy bicategory of the $\tuple{\infty, 2}$-category of $\tuple{\infty, 1}$-categories; in particular, it is shown that the analogous construction using complete Segal spaces instead of quasicategories yields a bicategorically equivalent 2-category.
\end{abstract}

\section*{Introduction}

In recent years, it has become accepted that an $\tuple{\infty, 1}$-category should be something like a category weakly enriched in $\infty$-groupoids, where `$\infty$-groupoid' refers to weak homotopy types, and by `weakly enriched' we mean that composition is unital and associative only up to coherent homotopy. There has been a proliferation of proposed definitions making this precise: for example, simplicially enriched categories, Segal categories, complete Segal spaces, quasicategories, and relative categories. Fortunately, the categories of such structures (with the appropriate notion of equivalence) can each be embedded in a Quillen model category, and the model categories so obtained have been shown to be Quillen-equivalent: see \citep{Bergner:2007a}, \citep{Joyal-Tierney:2007}, and \citep{Barwick-Kan:2012}. In short, the aforementioned theories of $\tuple{\infty, 1}$-categories have the same homotopy theory.

However, the present writer contends that it is not enough to have presentations of the homotopy theory of $\tuple{\infty, 1}$-categories: we would like to also have presentations of its formal category theory. To illustrate, consider a homotopy pullback square in a simplicially enriched category $\mathcal{C}$: is it the case that the corresponding square in the quasicategorical incarnation of $\mathcal{C}$ is a pullback square in the sense of Joyal and Lurie? Recent work of \citet{Riehl-Verity:2013a,Riehl-Verity:2013b} has shown that questions of this nature can be answered by looking at what one might call the homotopy bicategory of the $\tuple{\infty, 2}$-category of $\tuple{\infty, 1}$-categories, at least when the latter is realised as the cartesian closed category of quasicategories.

We now briefly recall the aforementioned construction. Let $\cat{\SSet}$ be the category of simplicial sets, let $\cat{\Qcat}$ be the full subcategory of quasicategories, let $\nv : \cat{\Cat} \to \cat{\SSet}$ be the nerve functor, and let $\tau_1 : \cat{\SSet} \to \cat{\Cat}$ be its left adjoint. The Joyal model structure for quasicategories is cartesian (in the sense of \citet[\Sect 2]{Rezk:2010}), so $\cat{\Qcat}$ is an exponential ideal in $\cat{\SSet}$; in particular, $\cat{\Qcat}$ is cartesian closed. One can show that $\tau_1 : \cat{\SSet} \to \cat{\Cat}$ preserves finite products, so the cartesian closed structure of $\cat{\Qcat}$ gives rise to a 2-category $\bicat{\Qcat}$ whose underlying 1-category is (isomorphic to) $\cat{\Qcat}$. Moreover, a morphism in $\bicat{\Qcat}$ is an equivalence in the 2-categorical sense if and only if it is a weak equivalence in the Joyal model structure, so this is a good 2-truncation of the $\tuple{\infty, 2}$-category of $\tuple{\infty, 1}$-categories (in some sense still to be made precise).

Essentially the same story goes through for complete Segal spaces, though the proofs are (by necessity) more complicated. The purpose of the present paper is threefold:
\begin{itemize}
\item To give evidence for the correctness of the Joyal--Riehl--Verity construction of the homotopy bicategory of $\tuple{\infty, 1}$-categories, by showing that its bicategorical equivalence class is essentially theory-independent.

\item To show that an analogous construction can be carried out using complete Segal spaces instead of quasicategories.

\item To exhibit an explicit bicategorical equivalence between the two constructions.
\end{itemize}
In the first section, we will review some basic facts about cartesian closed categories and cartesian model structures. This will be followed by some observations about the 2-category $\bicat{\Qcat}$ and its intrinsicality with respect to the homotopy theory of $\tuple{\infty, 1}$-categories. We will then prove some propositions for Segal spaces needed to make the construction go through, and in the last section, we will show that one of the Quillen equivalences constructed by \citet{Joyal-Tierney:2007} induces the desired bicategorical equivalence.

\subsection*{Acknowledgements}

The author is indebted to Mike Shulman for providing the proof of \autoref{thm:bicategorical.equivalences}, which lies at the heart of the argument that the bicategorical equivalence class of the homotopy bicategory of the $\tuple{\infty, 2}$-category of $\tuple{\infty, 1}$-categories is well-defined. Thanks are also due to Emily Riehl for pointing out the work of \citet{Toen:2005}.

The author gratefully acknowledges financial support from the Cambridge Commonwealth, European and International Trust and the Department of Pure Mathematics and Mathematical Statistics.

\section{Generalities}

Recall the following definition:

\begin{dfn}
A \strong{cartesian closed category} is a category $\mathcal{C}$ equipped with the following data:
\begin{itemize}
\item A terminal object $1$.

\item A functor $\blank \times \blank : \mathcal{C} \times \mathcal{C} \to \mathcal{C}$ such that $X \times Y$ is the cartesian product of $X$ and $Y$ (in a natural way).

\item A functor $\xHom{\blank}{\blank} : \mathcal{C} \times \mathcal{C} \to \mathcal{C}$ equipped with bijections
\[
\Hom[\mathcal{C}]{X \times Y}{Z} \cong \Hom[\mathcal{C}]{X}{\xHom{Y}{Z}}
\]
that are natural in $X$, $Y$, and $Z$.
\end{itemize}
A \strong{cartesian closed functor} is a functor $F : \mathcal{C} \to \mathcal{D}$ between cartesian closed categories satisfying these conditions:
\begin{itemize}
\item $F$ preserves finite products (up to isomorphism).

\item The canonical morphism $F \xHom{X}{Y} \to \xHom{F X}{F Y}$ is an isomorphism for all $X$ and $Y$ in $\mathcal{C}$.
\end{itemize}
\end{dfn}

\begin{examples}
The following categories are cartesian closed:
\begin{enumerate}[(a)]
\item The category of sets, $\cat{\Set}$.

\item The category of small categories, $\cat{\Cat}$.

\item The category of small simplicially-enriched categories, $\cat{\SCat}$.

\item The category of presheaves on any small category.
\end{enumerate}
\end{examples}

It is not hard to check that a cartesian closed category $\mathcal{C}$ has a canonical $\mathcal{C}$-enrichment, with the object of morphisms $X \to Y$ being the exponential object $\xHom{X}{Y}$. As such, if $\mathcal{D}$ is a category with finite products and $F : \mathcal{C} \to \mathcal{D}$ is a functor that preserves finite products, there is an induced $\mathcal{D}$-enriched category $F \argb{\ul{\mathcal{C}}}$ with the same objects as $\mathcal{C}$ but where the object of morphisms $X \to Y$ is given by $F \xHom{X}{Y}$. (Of course, identities and composition are inherited from $\mathcal{C}$ via $F$.) Moreover:

\begin{lem}
\label{lem:transported.cartesian.closed.structure}
The underlying ordinary category of the $\mathcal{D}$-enriched category $F \argb{\ul{\mathcal{C}}}$ inherits the structure of a cartesian closed category from $\mathcal{C}$.
\end{lem}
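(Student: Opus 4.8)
The plan is to reduce first to the case $\mathcal{D} = \Set$. The underlying ordinary category of a $\mathcal{D}$-enriched category is computed by applying the global-sections functor $G = \Hom[\mathcal{D}]{1}{\blank} : \mathcal{D} \to \Set$ to its hom-objects; being representable, $G$ preserves all limits, in particular finite products. Hence $H = G \circ F : \mathcal{C} \to \Set$ again preserves finite products, and since change of base is functorial the underlying ordinary category of $F\argb{\ul{\mathcal{C}}}$ is precisely $H\argb{\ul{\mathcal{C}}}$, which I shall call $\mathcal{E}$: it has the objects of $\mathcal{C}$, with $\Hom[\mathcal{E}]{X}{Y} = H\xHom{X}{Y}$, and with identities and composition obtained by applying $H$ to the internal identities and composition morphisms of $\ul{\mathcal{C}}$ (using that $H$ carries the relevant products to products). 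It therefore suffices to equip $\mathcal{E}$ with a cartesian closed structure, and I claim that the terminal object $1$, the binary products $X \times Y$, and the exponentials $\xHom{X}{Y}$ of $\mathcal{C}$ all serve, transported along the canonical identity-on-objects functor $J : \mathcal{C} \to \mathcal{E}$.

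Next I would record the three internal natural isomorphisms valid in any cartesian closed category, each an immediate consequence of the Yoneda lemma:
\[
\xHom{W}{1} \cong 1, \qquad \xHom{W}{X \times Y} \cong \xHom{W}{X} \times \xHom{W}{Y}, \qquad \xHom{W \times X}{Y} \cong \xHom{W}{\xHom{X}{Y}},
\]
all natural in $W$. Applying $H$ and invoking preservation of finite products (so that $H(1) \cong 1$) turns these into bijections
\[
\Hom[\mathcal{E}]{W}{1} \cong 1, \qquad \Hom[\mathcal{E}]{W}{X \times Y} \cong \Hom[\mathcal{E}]{W}{X} \times \Hom[\mathcal{E}]{W}{Y}, \qquad \Hom[\mathcal{E}]{W \times X}{Y} \cong \Hom[\mathcal{E}]{W}{\xHom{X}{Y}},
\]
which on their face exhibit $1$ as terminal, $X \times Y$ as a binary product, and $\xHom{X}{Y}$ as an exponential in $\mathcal{E}$.

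The one delicate point — and the main obstacle — is that these bijections must be the canonical comparison maps, that is, natural in $W$ with respect to the morphisms of $\mathcal{E}$ rather than merely those of $\mathcal{C}$; this is exactly what is required for the universal properties to hold in $\mathcal{E}$, which has strictly more morphisms than $\mathcal{C}$ in general. To settle it I would verify that each transported isomorphism is induced by postcomposition with (respectively transposition against) the $J$-images of the $\mathcal{C}$-projections $\pi_X, \pi_Y$ and of the $\mathcal{C}$-evaluation $\xHom{X}{Y} \times X \to Y$. The relevant facts — that internal postcomposition $\xHom{W}{\pi_X}$ is composition with the name of $\pi_X$, and that internal transposition is composition with evaluation — are expressed by diagrams built solely from the product and composition morphisms of $\ul{\mathcal{C}}$; since $H$ preserves finite products it preserves these diagrams, so the $H$-images of the isomorphisms coincide with postcomposition and transposition in $\mathcal{E}$, which are automatically natural in $W$. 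The three universal properties follow, and $\mathcal{E}$ is cartesian closed. Conceptually, this merely unwinds the observation that $\ul{\mathcal{C}}$ is a cartesian closed $\mathcal{C}$-category, that change of base along the strong monoidal functor $F$ preserves this structure, and that passage to underlying ordinary categories does too.
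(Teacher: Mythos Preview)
Your proof is correct and follows essentially the same route as the paper: apply the product-preserving functor to the three internal isomorphisms $\xHom{W}{1} \cong 1$, $\xHom{W}{X \times Y} \cong \xHom{W}{X} \times \xHom{W}{Y}$, and $\xHom{W \times X}{Y} \cong \xHom{W}{\xHom{X}{Y}}$. Your preliminary reduction to $\mathcal{D} = \Set$ via the global-sections functor is a harmless cleanup, and you are in fact more careful than the paper on the one genuinely delicate point --- that the resulting bijections are the canonical comparison maps (postcomposition with projections, transposition against evaluation) and hence natural in $W$ over $\mathcal{E}$ rather than merely over $\mathcal{C}$ --- which the paper's proof simply elides.
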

\begin{proof}
It is easy to check that $\xHom{X}{1} \cong 1$ for all $X$ in $\mathcal{C}$, so $1$ is still a terminal object in $F \argb{\ul{\mathcal{C}}}$. Since $F$ preserves binary products, we have the following natural isomorphisms:
\[
F \xHom{X}{Y \times Z}
\cong F \argp{\xHom{X}{Y} \times \xHom{X}{Z}}
\cong F \xHom{X}{Y} \times F \xHom{X}{Z}
\]
It therefore follows that $Y \times Z$ is a cartesian product of $Y$ and $Z$ in $F \argb{\ul{\mathcal{C}}}$ as well. Finally, there is a natural isomorphism
\[
\xHom{X \times Y}{Z} \cong \xHom{X}{\xHom{Y}{Z}}
\]
and so by applying $F$ we deduce that $\xHom{Y}{Z}$, regarded as an object in $F \argb{\ul{\mathcal{C}}}$, is an exponential object of $Z$ by $Y$.
\end{proof}

\begin{dfn}
Let $\mathcal{C}$ be a cartesian closed category. A \strong{reflective exponential ideal} of $\mathcal{C}$ is a reflective subcategory $\mathcal{D}$ with the following property:
\begin{itemize}
\item For all $C$ in $\mathcal{C}$ and all $D$ in $\mathcal{D}$, the exponential object $\xHom{C}{D}$ (as computed in $\mathcal{C}$) is isomorphic to an object in $\mathcal{D}$.
\end{itemize}
\end{dfn}

\begin{prop}
\label{prop:reflective.exponential.ideals}
Let $\mathcal{C}$ be a cartesian closed category, let $G : \mathcal{D} \to \mathcal{C}$ be a fully faithful functor, and let $F : \mathcal{C} \to \mathcal{D}$ be a left adjoint of $G$. The following are equivalent:
\begin{enumerate}[(i)]
\item The image of $G : \mathcal{D} \to \mathcal{C}$ is a reflective exponential ideal of $\mathcal{C}$.

\item The functor $F : \mathcal{C} \to \mathcal{D}$ preserves finite products.

\item $\mathcal{D}$ is a cartesian closed category and, for all $C$ in $\mathcal{C}$ and all $D$ in $\mathcal{D}$, the canonical morphisms
\begin{align*}
G \xHom{F C}{D} & \to \xHom{G F C}{G D} &
\xHom{G F C}{G D} & \to \xHom{C}{G D}
\end{align*}
are isomorphisms.
\end{enumerate}
\end{prop}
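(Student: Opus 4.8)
The plan is to establish the cycle (iii)~$\Rightarrow$~(i)~$\Rightarrow$~(ii)~$\Rightarrow$~(iii), after first recording the standard facts about the reflection. Write $\eta : \id_{\mathcal{C}} \Rightarrow GF$ and $\epsilon : FG \Rightarrow \id_{\mathcal{D}}$ for the unit and counit. Since $G$ is fully faithful, $\epsilon$ is a natural isomorphism, so by the triangle identity $\epsilon_{FC} \circ F\eta_C = \id_{FC}$ each $F\eta_C$ is an isomorphism. Call $W$ in $\mathcal{C}$ \emph{local} if $\eta_W$ is an isomorphism; then $W$ is local exactly when it is isomorphic to an object in the image of $G$, and the universal property of the reflection is precisely that $\Hom[\mathcal{C}]{\eta_A}{W} : \Hom[\mathcal{C}]{GFA}{W} \to \Hom[\mathcal{C}]{A}{W}$ is a bijection for every local $W$. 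Finally, as $\mathcal{D}$ is reflective it is closed under the finite products of $\mathcal{C}$ (the terminal object and any product of local objects is again local, by testing against $\eta$), so $\mathcal{D}$ has finite products and $G$ preserves them.

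The implication (iii)~$\Rightarrow$~(i) is immediate: composing the two canonical isomorphisms of (iii) exhibits $\xHom{C}{GD}$ as isomorphic to $G\xHom{FC}{D}$, hence to an object in the image of $G$, which is exactly the defining condition of a reflective exponential ideal.

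For (i)~$\Rightarrow$~(ii), the step I expect to be the main obstacle, I would show that $F$ preserves the terminal object (automatic, since $1$ is local) and binary products. As $G$ is fully faithful and reflects isomorphisms, it suffices to prove that the comparison $GF(A \times B) \to GFA \times GFB \cong G(FA \times FB)$ is invertible, and for this I would verify that $\eta_A \times \eta_B : A \times B \to GFA \times GFB$ is a reflection, \ie that $\Hom[\mathcal{C}]{\eta_A \times \eta_B}{W}$ is a bijection for every local $W$. Factoring $\eta_A \times \eta_B = (\eta_A \times \id_{GFB}) \circ (\id_A \times \eta_B)$ and transposing the two factors across the exponential adjunction of $\mathcal{C}$ turns them into $\Hom[\mathcal{C}]{\eta_A}{\xHom{GFB}{W}}$ and $\Hom[\mathcal{C}]{\eta_B}{\xHom{A}{W}}$; here the exponential ideal hypothesis guarantees that $\xHom{GFB}{W}$ and $\xHom{A}{W}$ are local, so both are bijections by the reflection universal property. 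This is the crux, as it is exactly where cartesian closedness of $\mathcal{C}$ and the ideal property combine to force product-preservation; everything else is formal.

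For (ii)~$\Rightarrow$~(iii) I would first note that (ii) recovers (i): assuming $F$ preserves finite products, $\xHom{C}{GD}$ is local because for every $Z$ one has $\Hom[\mathcal{C}]{Z}{\xHom{C}{GD}} \cong \Hom[\mathcal{D}]{F(Z \times C)}{D}$, and the locality test corresponds to precomposition with $F(\eta_Z \times \id_C) \cong F\eta_Z \times \id_{FC}$, which is invertible. With the ideal property in hand, $\mathcal{D}$ is cartesian closed: its exponential $\xHom{D_1}{D_2}$ is the object of $\mathcal{D}$ whose image under $G$ is the local object $\xHom{GD_1}{GD_2}$, the required natural bijection being obtained by transporting the exponential adjunction of $\mathcal{C}$ across $G$ and using that $G$ preserves products. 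This identification is precisely the assertion that the first canonical morphism $G\xHom{FC}{D} \to \xHom{GFC}{GD}$ is an isomorphism. For the second, namely $\xHom{\eta_C}{GD} : \xHom{GFC}{GD} \to \xHom{C}{GD}$, I would test against arbitrary $Z$: under the exponential and reflection adjunctions the map becomes precomposition with $F(\id_Z \times \eta_C) \cong \id_{FZ} \times F\eta_C$, invertible since $F\eta_C$ is; so $\xHom{\eta_C}{GD}$ is an isomorphism by the Yoneda lemma, completing (iii).
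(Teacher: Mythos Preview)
Your argument is correct. The paper does not actually prove this proposition; it merely cites Proposition~A4.3.1 of Johnstone's \emph{Sketches of an Elephant}, so there is nothing substantive to compare against. Your cycle (iii)~$\Rightarrow$~(i)~$\Rightarrow$~(ii)~$\Rightarrow$~(iii) is the standard route, and the key step---showing that $\eta_A \times \eta_B$ is a reflection by factoring it and transposing each factor into a map of the form $\Hom[\mathcal{C}]{\eta_{(-)}}{\xHom{-}{W}}$ with local target---is exactly the argument one finds in Johnstone. The only mild redundancy is that in (ii)~$\Rightarrow$~(iii) you re-derive (i) as an intermediate step, but this is harmless and indeed convenient for constructing the exponentials in~$\mathcal{D}$.
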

\begin{proof} \openproof
See Proposition 4.3.1 in \citep[Part A]{Johnstone:2002a}.
\end{proof}

\begin{lem}
\label{lem:enrichment.via.transport}
Let $\mathcal{C}$ be a cartesian closed category and let $F : \mathcal{C} \to \mathcal{D}$ be a functor that preserves finite products. The functor $\mathcal{C} \to F \argb{\ul{\mathcal{C}}}$ induced by $F$ is an isomorphism of ordinary categories if and only if the hom-set maps
\[
\Hom[\mathcal{C}]{1}{X} \to \Hom[\mathcal{D}]{F 1}{F X}
\]
induced by $F$ are bijections for all objects $X$ in $\mathcal{C}$. 
\end{lem}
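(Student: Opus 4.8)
The plan is to use the fact that the canonical functor $\mathcal{C} \to F \argb{\ul{\mathcal{C}}}$ is the identity on objects, so that it is an isomorphism of ordinary categories if and only if it is a bijection on every hom-set. First I would write out the two hom-sets involved: by the cartesian closed adjunction, a morphism $X \to Y$ in $\mathcal{C}$ is the same as a morphism $1 \to \xHom{X}{Y}$, whereas a morphism $X \to Y$ in the underlying ordinary category of $F \argb{\ul{\mathcal{C}}}$ is by definition a morphism $1 \to F \xHom{X}{Y}$ in $\mathcal{D}$, with $1$ now the terminal object of $\mathcal{D}$. Since $F$ preserves the terminal object we have $F 1 \cong 1$ in $\mathcal{D}$, and the hom-set map induced by the canonical functor factors as
\[
\Hom[\mathcal{C}]{X}{Y} \cong \Hom[\mathcal{C}]{1}{\xHom{X}{Y}} \to \Hom[\mathcal{D}]{F 1}{F \xHom{X}{Y}} \cong \Hom[\mathcal{D}]{1}{F \xHom{X}{Y}},
\]
in which the two outer maps are bijections and the middle map is the one induced by $F$. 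Consequently the canonical functor is an isomorphism precisely when $\Hom[\mathcal{C}]{1}{\xHom{X}{Y}} \to \Hom[\mathcal{D}]{F 1}{F \xHom{X}{Y}}$ is a bijection for all $X$ and $Y$.

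It then remains to match this against the stated condition, that $\Hom[\mathcal{C}]{1}{Z} \to \Hom[\mathcal{D}]{F 1}{F Z}$ be a bijection for every object $Z$. One direction is immediate: specialising $Z$ to $\xHom{X}{Y}$ recovers exactly the criterion just obtained. For the converse I would invoke the natural isomorphism $\xHom{1}{Z} \cong Z$, which holds in any cartesian closed category because $1$ is the unit for the product; applying $F$ yields a natural isomorphism $F \xHom{1}{Z} \cong F Z$ in $\mathcal{D}$. Setting $X = 1$ and $Y = Z$ and transporting along this isomorphism then identifies the $F$-induced map on $\xHom{1}{Z}$ with the $F$-induced map on $Z$; naturality of $\Hom[\mathcal{C}]{1}{\blank}$ and $\Hom[\mathcal{D}]{F 1}{\blank}$ makes the comparison square commute, so bijectivity passes from one map to the other.

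The whole argument is a matter of unwinding definitions, and I do not anticipate a serious obstacle. The one point that genuinely requires care is the factorisation claimed in the first paragraph: one must confirm that the comparison functor $\mathcal{C} \to F \argb{\ul{\mathcal{C}}}$ sends a morphism $f : X \to Y$ to the composite of the canonical isomorphism $1 \cong F 1$ with $F g : F 1 \to F \xHom{X}{Y}$, where $g : 1 \to \xHom{X}{Y}$ is the morphism corresponding to $f$ under the cartesian closed adjunction. Once this description is in hand, both implications follow by restricting attention to the objects of the form $\xHom{X}{Y}$ and to the presentation $Z \cong \xHom{1}{Z}$ respectively.
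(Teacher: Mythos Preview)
Your proposal is correct and follows exactly the same approach as the paper's proof, which simply cites the natural bijection $\Hom[\mathcal{C}]{X}{Y} \cong \Hom[\mathcal{C}]{1}{\xHom{X}{Y}}$ and the natural isomorphism $X \cong \xHom{1}{X}$; you have merely spelled out the details that the paper leaves implicit.
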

\begin{proof}
Simply recall the natural bijection $\Hom[\mathcal{C}]{X}{Y} \cong \Hom[\mathcal{C}]{1}{\xHom{X}{Y}}$ and the natural isomorphism $X \cong \xHom{1}{X}$.
\end{proof}

Throughout this paper, we use the definition of `model category' found in \citep[\Sect 1.1]{Hovey:1999}, \ie our model categories are complete, cocomplete, and have functorial factorisations. 

\begin{dfn}
\needspace{3.0\baselineskip}
A \strong{cartesian model structure} on a cartesian closed category is a model structure satisfying the following additional axioms:
\begin{itemize}
\item If $p : X \to Y$ is a fibration and $i : Z \to W$ is a cofibration, then in any commutative diagram
\[
\begin{tikzcd}
\xHom{W}{X} \arrow[swap, bend right=15]{ddr}{\xHom{W}{p}} \arrow[bend left=15]{drr}{\xHom{i}{X}} \drar[dashed]{q} \\
& 
L \argp{i, p} \dar \rar &
\xHom{Z}{X} \dar{\xHom{Z}{p}} \\
&
\xHom{W}{Y} \rar[swap]{\xHom{i}{Y}} &
\xHom{Z}{Y}
\end{tikzcd}
\]
where the square in the lower right is a pullback square, the morphism $q : \xHom{W}{X} \to L \argp{i, p}$ indicated in the diagram is a fibration, and $q$ is a weak equivalence if either $i$ or $p$ is a weak equivalence.

\item The terminal object $1$ is cofibrant.
\end{itemize}
A \strong{cartesian model category} is a cartesian closed category equipped with a model structure.
\end{dfn}

\begin{example}
The Kan--Quillen model structure on $\cat{\SSet}$ is a cartesian model structure: see \eg Proposition 4.2.8 in \citep{Hovey:1999}.
\end{example}

\begin{thm}
\needspace{3.0\baselineskip}
Let $\mathcal{M}$ be a cartesian model category, let $\mathcal{M}_\mathrm{f}$ be the full subcategory of fibrant objects, and let $\Ho \mathcal{M}$ be the homotopy category of $\mathcal{M}$. 
\begin{enumerate}[(i)]
\item $\mathcal{M}_\mathrm{f}$ is closed under small products in $\mathcal{M}$, and $\xHom{X}{Y}$ is fibrant if $X$ is cofibrant and $Y$ is fibrant.

\item The localisation functor $\gamma : \mathcal{M}_\mathrm{f} \to \Ho \mathcal{M}$ preserves small products; in particular, $\Ho \mathcal{M}$ has products for all small families of objects.

\item $\Ho \mathcal{M}$ is a cartesian closed category, and $\gamma \xHom{X}{Y}$ is naturally isomorphic to $\xHom{\gamma X}{\gamma Y}$ when $X$ is cofibrant and $Y$ is fibrant.

\item Let $\Gamma : \mathcal{M} \to \cat{\Set}$ be the functor $\Hom[\mathcal{M}]{1}{\blank}$ and let $\tau_0 : \mathcal{M} \to \cat{\Set}$ be the functor $\Hom[\Ho \mathcal{M}]{\gamma 1}{\gamma \blank}$. The functor $\tau_0$ preserves small products in $\mathcal{M}_\mathrm{f}$, and the component $\chi_Y : \Gamma Y \hoto \tau_0 Y$ of the natural transformation $\chi : \Gamma \hoto \tau_0$ induced by the functor $\gamma$ is surjective for all fibrant objects $Y$ in $\mathcal{M}$.
%
\end{enumerate}
\end{thm}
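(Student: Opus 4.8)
The plan is to handle the four parts in order, the workhorse throughout being the fundamental theorem of model categories—that for $A$ cofibrant and $B$ fibrant the set $\Hom[\Ho\mathcal{M}]{\gamma A}{\gamma B}$ is the set $\pi(A, B)$ of homotopy classes of morphisms $A \to B$—together with the pullback-hom axiom. Part (i) is quick. A right lifting property is stable under small products, so a product of fibrations is a fibration; taking each target to be $1$ shows $\prod_i X_i \to 1$ is a fibration whenever every $X_i$ is fibrant, giving closure of $\mathcal{M}_\mathrm{f}$ under small products. For the exponential I would feed the cofibration $\emptyset \to X$ and the fibration $Y \to 1$ into the pullback-hom axiom; since $W \times \emptyset \cong \emptyset$ we have $\xHom{\emptyset}{Z} \cong 1$ for every $Z$, so $L\argp{i, p}$ collapses to $1$ and the axiom asserts precisely that $\xHom{X}{Y} \to 1$ is a fibration.

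The substance is part (ii), and this is where the one real obstacle lies: in a general model category a product of weak equivalences need not be a weak equivalence, so one cannot simply multiply path objects. The remedy is that products of \emph{trivial fibrations} are trivial fibrations, again by stability of the lifting property. Given fibrant objects $X_i$, I would pick a path object $X_i \to \mathrm{Path}(X_i) \to X_i \times X_i$ for each; the projection $\mathrm{Path}(X_i) \to X_i$ is a fibration because $X_i \times X_i \to X_i$ is a pullback of $X_i \to 1$, and a weak equivalence by two-out-of-three, hence a trivial fibration. Therefore $\prod_i \mathrm{Path}(X_i) \to \prod_i X_i$ is a trivial fibration; since its composite with the diagonal $\prod_i X_i \to \prod_i \mathrm{Path}(X_i)$ is the identity, two-out-of-three makes that diagonal a weak equivalence, so $\prod_i \mathrm{Path}(X_i)$ is a path object for $\prod_i X_i$ (the map to the square being a product of fibrations). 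A right homotopy into $\prod_i X_i$ through this path object is exactly a family of right homotopies into the factors, so $\pi(W, \prod_i X_i) \cong \prod_i \pi(W, X_i)$ for cofibrant $W$; letting $W$ range over cofibrant objects and invoking the fundamental theorem shows $\gamma \prod_i X_i$ is $\prod_i \gamma X_i$, which is (ii).

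For part (iii) I would first note that a product of cofibrant objects is cofibrant: the pullback-hom axiom is equivalent, by adjunction, to the assertion that the pushout-product of two cofibrations is a cofibration (trivial if either is), and applying this to $\emptyset \to A$ and $\emptyset \to X$ together with $\emptyset \times X \cong \emptyset$ yields the cofibration $\emptyset \to A \times X$. With (i) and (ii) this makes $\gamma$ preserve binary products of fibrant-cofibrant objects, so $\Ho\mathcal{M}$ has finite products with unit $\gamma 1$; since every object of $\Ho\mathcal{M}$ is isomorphic to $\gamma Z$ for some fibrant-cofibrant $Z$, it suffices to produce the exponential adjunction on such objects. For $X$ cofibrant and $Y$ fibrant I would check that $\xHom{X}{\mathrm{Path}(Y)}$ is a path object for $\xHom{X}{Y}$: the pullback-hom axiom applied to $\emptyset \to X$ and $\mathrm{Path}(Y) \to Y \times Y$ shows the induced map to $\xHom{X}{Y} \times \xHom{X}{Y}$ is a fibration, while applying it to $\emptyset \to X$ and the trivial fibration $\mathrm{Path}(Y) \to Y$ (with two-out-of-three) shows $\xHom{X}{Y} \to \xHom{X}{\mathrm{Path}(Y)}$ is a weak equivalence. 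Transporting right homotopies across the adjunction $\Hom[\mathcal{M}]{A}{\xHom{X}{Y}} \cong \Hom[\mathcal{M}]{A \times X}{Y}$ then gives $\pi(A, \xHom{X}{Y}) \cong \pi(A \times X, Y)$, which under the fundamental theorem is the natural isomorphism $\Hom[\Ho\mathcal{M}]{\gamma A}{\gamma \xHom{X}{Y}} \cong \Hom[\Ho\mathcal{M}]{\gamma A \times \gamma X}{\gamma Y}$ exhibiting $\gamma \xHom{X}{Y}$ as $\xHom{\gamma X}{\gamma Y}$.

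Finally, part (iv) falls out as a corollary. Since $\tau_0 = \Hom[\Ho\mathcal{M}]{\gamma 1}{\gamma \blank}$, preservation of small products in $\mathcal{M}_\mathrm{f}$ is immediate from (ii) and the universal property of products in $\Ho\mathcal{M}$. For surjectivity of $\chi_Y$, the cartesian axiom makes $1$ cofibrant, and $Y$ is fibrant, so the fundamental theorem identifies $\tau_0 Y = \Hom[\Ho\mathcal{M}]{\gamma 1}{\gamma Y}$ with the quotient $\pi(1, Y)$ of $\Gamma Y = \Hom[\mathcal{M}]{1}{Y}$; under this identification $\chi_Y$ is exactly the quotient map $f \mapsto [f]$, which is surjective. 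The only step demanding an idea beyond bookkeeping is the product-preservation of (ii), precisely because of the failure of products of weak equivalences to be weak equivalences, circumvented by passing through trivial fibrations.
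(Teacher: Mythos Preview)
Your proof is correct, but for parts (ii) and (iii) you take a different and more hands-on route than the paper. For (ii), the paper argues abstractly that products in $\Ho\mathcal{M}$ coincide with homotopy products via the equivalence $\Ho\Func{J}{\mathcal{M}} \simeq \Func{J}{\Ho\mathcal{M}}$ for discrete $J$, and then notes that homotopy products of fibrant objects are computed by ordinary products; for (iii) it simply cites Hovey's Theorem~4.3.2. Your approach instead builds explicit path objects: you show $\prod_i \mathrm{Path}(X_i)$ is a path object for $\prod_i X_i$ by exploiting that products of \emph{trivial fibrations} are trivial fibrations (sidestepping the failure of products of weak equivalences to be weak equivalences), and similarly that $\xHom{X}{\mathrm{Path}(Y)}$ is a path object for $\xHom{X}{Y}$, then read off the desired bijections on homotopy classes directly. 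This is more elementary and self-contained, and makes the mechanism transparent; the paper's approach is terser and situates the result within the general theory of homotopy limits. One small point worth tightening in your (iii): the identification $\gamma(A \times X) \cong \gamma A \times \gamma X$ comes from (ii) only when both factors are fibrant, so your adjunction argument is cleanest when $A$, $X$, $Y$ are all taken fibrant--cofibrant (as you in fact set up); the statement for $X$ merely cofibrant and $Y$ merely fibrant then follows because $\xHom{X}{Y}$ is weakly equivalent to $\xHom{X'}{Y'}$ for fibrant--cofibrant replacements, by the pullback-hom axiom and Ken Brown's lemma. Parts (i) and (iv) match the paper's arguments.
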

\begin{proof}
(i). That $\mathcal{M}_\mathrm{f}$ is closed in $\mathcal{M}$ under small products is a straightforward consequence of the right lifting property of fibrations, and the compatibility axiom for cartesian model structures implies the other half of the claim.

\bigskip\noindent
(ii). It can be shown that the canonical functor $\Ho \Func{J}{\mathcal{M}} \to \Func{J}{\Ho \mathcal{M}}$ is an equivalence of categories for all sets $J$, so products in $\Ho \mathcal{M}$ coincide with homotopy products. Homotopy products in $\mathcal{M}_\mathrm{f}$ coincide with ordinary products, hence the localisation functor $\gamma : \mathcal{M}_\mathrm{f} \to \Ho \mathcal{M}$ preserves small products. Since every object in $\mathcal{M}$ is weakly equivalent to one in $\mathcal{M}_\mathrm{f}$, it follows that $\Ho \mathcal{M}$ has products for all small families of objects.

\bigskip\noindent
(iii). See Theorem 4.3.2 in \citep{Hovey:1999}.

\bigskip\noindent
(iv). As a representable functor, $\Hom[\Ho \mathcal{M}]{\gamma 1}{\blank} : \Ho \mathcal{M} \to \cat{\Set}$ preserves small products, and by claim (ii), $\gamma : \mathcal{M}_\mathrm{f} \to \Ho \mathcal{M}$ preserves small products, so $\tau_0 : \mathcal{M}_\mathrm{f} \to \cat{\Set}$ indeed preserves small products. 

It is well-known that the localisation functor induces hom-set maps
\[
\Hom[\mathcal{M}]{X}{Y} \to \Hom[\Ho \mathcal{M}]{\gamma X}{\gamma Y}
\]
that are surjective when $X$ is cofibrant and $Y$ is fibrant; but $1$ is cofibrant by hypothesis, so the map $\chi_Y : \Gamma Y \to \tau_0 Y$ is surjective for all fibrant  $Y$.
%
\end{proof}

Under stronger hypotheses, the homotopy category of a cartesian model category admits a description à la Hurewicz:

\begin{prop}
\label{prop:Hurewicz.homotopy.category}
Let $\mathcal{M}$ be a cartesian model category, let $\mathcal{M}_\mathrm{f}$ be the full subcategory of fibrant objects, and let $\Ho \mathcal{M}_\mathrm{f}$ be the localisation of $\mathcal{M}_\mathrm{f}$ at the weak equivalences. If all fibrant objects in $\mathcal{M}$ are cofibrant, then:
\begin{enumerate}[(i)]
\item $\mathcal{M}_\mathrm{f}$ is a cartesian closed category.

\item The natural transformation $\chi : \Gamma \hoto \tau_0$ induces a functor $\mathcal{M}_\mathrm{f} \to \tau_0 \argb{\ul{\mathcal{M}_\mathrm{f}}}$ that is a bijection on objects, full, and preserves small products and exponential objects. 
%

\item The canonical functor $\Ho \mathcal{M}_\mathrm{f} \to \tau_0 \argb{\ul{\mathcal{M}_\mathrm{f}}}$ is an isomorphism of categories.
\end{enumerate}
\end{prop}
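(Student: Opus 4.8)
The plan is to dispose of (i) and (ii) quickly from the preceding theorem and \autoref{lem:transported.cartesian.closed.structure}, and to concentrate the real work on (iii). For (i), I would check that $\mathcal{M}_\mathrm{f}$ is closed under the cartesian closed structure of $\mathcal{M}$: the terminal object $1$ is fibrant, and by the preceding theorem $\mathcal{M}_\mathrm{f}$ is closed under small products, so finite products in $\mathcal{M}_\mathrm{f}$ exist and are computed as in $\mathcal{M}$. The one place the standing hypothesis enters is the exponential: if $X$ and $Y$ are fibrant, then $X$ is also cofibrant, whence $\xHom{X}{Y}$ is fibrant by the same theorem, so $\xHom{X}{Y}$ lies in $\mathcal{M}_\mathrm{f}$, and the defining adjunction restricts. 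For (ii), the relevant functor is the comparison $\mathcal{M}_\mathrm{f} \to \tau_0 \argb{\ul{\mathcal{M}_\mathrm{f}}}$ induced by $\tau_0$ (which preserves finite products by the theorem): it is the identity on objects, and on hom-sets it is $\chi_{\xHom{X}{Y}} \colon \Gamma \xHom{X}{Y} \to \tau_0 \xHom{X}{Y}$ after the identification $\Hom[\mathcal{M}]{X}{Y} \cong \Gamma \xHom{X}{Y}$. Bijectivity on objects is immediate; fullness follows because $\xHom{X}{Y}$ is fibrant by (i) and $\chi$ is surjective on fibrant objects by the theorem. Preservation of products and exponentials I would read off \autoref{lem:transported.cartesian.closed.structure}, which equips $\tau_0 \argb{\ul{\mathcal{M}_\mathrm{f}}}$ with a cartesian closed structure having the same terminal object, products, and exponentials as $\mathcal{M}_\mathrm{f}$; together with the facts that $\tau_0$ preserves small products and that $\xHom{W}{\prod_i X_i} \cong \prod_i \xHom{W}{X_i}$ naturally, this shows the comparison functor carries products to products and exponentials to exponentials.

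For (iii) I would first reduce both sides to homotopy classes. Since every object of $\mathcal{M}_\mathrm{f}$ is bifibrant, standard model-category theory gives $\Hom[\Ho \mathcal{M}_\mathrm{f}]{X}{Y} = \Hom[\mathcal{M}]{X}{Y}/{\simeq}$, homotopy classes of maps $X \to Y$. On the other side, $1$ is cofibrant and $\xHom{X}{Y}$ is fibrant, so $\gamma$ induces a bijection $\Hom[\mathcal{M}]{1}{\xHom{X}{Y}}/{\simeq} \xrightarrow{\;\cong\;} \Hom[\Ho \mathcal{M}]{\gamma 1}{\gamma \xHom{X}{Y}} = \tau_0 \xHom{X}{Y}$; this bijection is exactly $\chi_{\xHom{X}{Y}}$, so the comparison hom-map of (ii) is precisely the quotient by the homotopy relation on maps $1 \to \xHom{X}{Y}$. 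The comparison functor of (ii) then sends weak equivalences to isomorphisms: between bifibrant objects a weak equivalence is a homotopy equivalence by Whitehead's theorem, and — granting the comparison of homotopy relations established below — homotopic maps have equal image, so homotopy equivalences become isomorphisms. Hence it factors through the canonical functor $\Ho \mathcal{M}_\mathrm{f} \to \tau_0 \argb{\ul{\mathcal{M}_\mathrm{f}}}$, which is bijective on objects and full by (ii); it remains to prove faithfulness.

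The heart of the argument is thus the claim that, under the adjunction $\Hom[\mathcal{M}]{X}{Y} \cong \Hom[\mathcal{M}]{1}{\xHom{X}{Y}}$, a map $f$ is homotopic to $g$ if and only if its transpose $\hat{f} \colon 1 \to \xHom{X}{Y}$ is homotopic to $\hat{g}$. I would fix an interval object, that is, a cylinder $1 \amalg 1 \to J \xrightarrow{w} 1$ on the terminal object with $J$ cofibrant. Using the cartesian-model axiom (equivalently the pushout--product property, together with Ken Brown's lemma) one checks that $X \times J$ is a cylinder object for $X$ and that $\xHom{J}{Y}$ is a path object for $Y$. A right homotopy $H \colon X \to \xHom{J}{Y}$ from $f$ to $g$ then transposes to a map $J \to \xHom{X}{Y}$, which is exactly a left homotopy from $\hat{f}$ to $\hat{g}$ through the cylinder $J$ on $1$; transposing in both directions yields the asserted biconditional. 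Combined with the identifications above, the comparison hom-map then has fibres precisely the homotopy classes, so the canonical functor is faithful and hence an isomorphism of categories. I expect the verification that $X \times J$ and $\xHom{J}{Y}$ genuinely serve as cylinder and path objects — the only place the cartesian-model axiom is used in an essential way — to be the main obstacle; the remaining transposition of homotopies is formal once the adjunction and the bifibrancy of all objects involved are in hand.
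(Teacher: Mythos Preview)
Your proof is correct. Parts (i) and (ii) match the paper's argument essentially verbatim. For (iii), however, you take a genuinely different route from the paper.

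The paper dispatches (iii) in a single sentence: since the preceding theorem (via Hovey's Theorem~4.3.2) establishes that $\Ho \mathcal{M}$ is cartesian closed with $\gamma \xHom{X}{Y} \cong \xHom{\gamma X}{\gamma Y}$ for $X$ cofibrant and $Y$ fibrant, the category $\tau_0 \argb{\ul{\mathcal{M}_\mathrm{f}}}$ is \emph{by definition} the underlying ordinary category of the canonical $\parens{\Ho \mathcal{M}_\mathrm{f}}$-self-enrichment $\ul{\Ho \mathcal{M}_\mathrm{f}}$, and that underlying category is just $\Ho \mathcal{M}_\mathrm{f}$. In effect the paper has front-loaded all the work into the cartesian closedness of the homotopy category, after which the hom-set bijection $\Hom[\Ho \mathcal{M}_\mathrm{f}]{X}{Y} \cong \tau_0 \xHom{X}{Y}$ falls out of the exponential adjunction in $\Ho \mathcal{M}_\mathrm{f}$ for free.

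Your approach instead works directly at the level of homotopies in $\mathcal{M}$: you fix an interval $J$, check via the pushout-product/pullback-power axiom that $J \times X$ and $\xHom{J}{Y}$ serve as cylinder and path objects, and then transpose homotopies across the adjunction $\Hom[\mathcal{M}]{X}{Y} \cong \Hom[\mathcal{M}]{1}{\xHom{X}{Y}}$ to identify the two homotopy relations. This is longer but more elementary --- it never invokes the monoidal structure on $\Ho \mathcal{M}$, only Whitehead's theorem and the raw cartesian-model axiom --- and it makes the content of the isomorphism explicit, showing that the comparison map is literally the quotient by homotopy on both sides. The paper's argument is slicker because the heavy lifting has been delegated to Hovey; yours is more self-contained and would survive in settings where one wanted to avoid that citation.
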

\begin{proof}
(i). Since all fibrant objects are cofibrant, the exponential object $\xHom{X}{Y}$ is fibrant for all $X$ and $Y$ in $\mathcal{M}_\mathrm{f}$; since $\mathcal{M}_\mathrm{f}$ is closed under small products in $\mathcal{M}$, it follows that $\mathcal{M}_\mathrm{f}$ is a cartesian closed category.

\bigskip\noindent
(ii). This is a straightforward consequence of the fact that $\tau_0 : \mathcal{M}_\mathrm{f} \to \cat{\Set}$ preserves small products, that we have a natural bijection $\Gamma \xHom{X}{Y} \cong \Hom[\mathcal{M}]{X}{Y}$ for all objects $X$ and $Y$, and that $\chi_Z : \Gamma Z \to \tau_0 Z$ is a surjection for all fibrant objects $Z$.
%
%
%

\bigskip\noindent
(iii). By definition, $\tau_0 \argb{\ul{\mathcal{M}_\mathrm{f}}}$ is the underlying category of the $\parens{\Ho \mathcal{M}_\mathrm{f}}$-enriched category $\ul{\Ho \mathcal{M}_\mathrm{f}}$, so it is canonically isomorphic to $\Ho \mathcal{M}_\mathrm{f}$.
\end{proof}

\begin{prop}
\label{prop:Quillen.homotopy.category.as.a.reflective.subcategory}
\needspace{3.0\baselineskip}
Let $\mathcal{M}$ be a cartesian model category. If all objects in $\mathcal{M}$ are cofibrant, then:
\begin{enumerate}[(i)]
\item The functors $\gamma : \mathcal{M} \to \Ho \mathcal{M}$ and $\tau_0 : \mathcal{M} \to \cat{\Set}$ both preserve finite products.

\item A morphism $f : X \to Y$ in $\mathcal{M}$ is a weak equivalence if and only if the induced maps
\[
\tau_0 \xHom{f}{Z} : \tau_0 \xHom{Y}{Z} \to \tau_0 \xHom{X}{Z}
\]
are bijections for all fibrant objects $Z$ in $\mathcal{M}$.

\item The inclusion $\mathcal{M}_\mathrm{f} \embedinto \mathcal{M}$ induces a fully faithful functor $\tau_0 \argb{\ul{\mathcal{M}}_\mathrm{f}} \to \tau_0 \argb{\ul{\mathcal{M}}}$ with a left adjoint.
\end{enumerate}
\end{prop}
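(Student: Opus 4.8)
The plan is to dispatch the three claims in turn, drawing the bulk of the work from the preceding theorem and \autoref{prop:Hurewicz.homotopy.category}; the only genuinely new point is the compatibility of $\gamma$ with binary products of objects that need not be fibrant, which I treat first. For (i), since every object is cofibrant the initial object $\emptyset$ satisfies $\emptyset \times W \cong \emptyset$ for all $W$ (as $\blank \times W$ is a left adjoint, hence preserves the initial object), so the pushout-product of $\emptyset \to Y$ with any map $g$ is just $g \times \id_Y$. I would invoke the pushout-product form of the cartesian axiom (equivalent by adjunction to the pullback form stated above): taking fibrant replacements $r_X : X \to RX$ and $r_Y : Y \to RY$ through trivial cofibrations, each of $X \times Y \to RX \times Y$ and $RX \times Y \to RX \times RY$ is the pushout-product of a trivial cofibration with a cofibration of the shape $\emptyset \to (\blank)$, hence a trivial cofibration; composing, $X \times Y \to RX \times RY$ is a weak equivalence. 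Since $RX \times RY$ is a product of fibrant objects, the preceding theorem gives $\gamma RX \times \gamma RY \cong \gamma(RX \times RY) \cong \gamma(X \times Y)$, and one checks the resulting isomorphism is the canonical comparison map $\gamma(X \times Y) \to \gamma X \times \gamma Y$; the terminal object is preserved because $1$ is both fibrant and cofibrant. As $\tau_0 = \Hom[\Ho \mathcal{M}]{\gamma 1}{\gamma \blank}$ is the composite of $\gamma$ with the representable functor $\Hom[\Ho \mathcal{M}]{\gamma 1}{\blank}$ at the terminal object $\gamma 1$, it too preserves finite products.

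For (ii), the key identification is $\tau_0 \xHom{Y}{Z} \cong \Hom[\Ho \mathcal{M}]{\gamma Y}{\gamma Z}$ for $Z$ fibrant (and $Y$ cofibrant, as all objects are): by part (iii) of the preceding theorem $\gamma \xHom{Y}{Z} \cong \xHom{\gamma Y}{\gamma Z}$, and applying the cartesian closed adjunction of $\Ho \mathcal{M}$ together with $\gamma 1 \times \gamma Y \cong \gamma Y$ collapses $\Hom[\Ho \mathcal{M}]{\gamma 1}{\xHom{\gamma Y}{\gamma Z}}$ to $\Hom[\Ho \mathcal{M}]{\gamma Y}{\gamma Z}$, naturally in $Y$ and $Z$. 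Under this identification $\tau_0 \xHom{f}{Z}$ becomes precomposition with $\gamma f$, so the stated condition says precisely that $\Hom[\Ho \mathcal{M}]{\gamma f}{\gamma Z}$ is a bijection for every fibrant $Z$. Since every object of $\Ho \mathcal{M}$ is isomorphic to $\gamma Z$ for some fibrant $Z$, Yoneda turns this into the assertion that $\gamma f$ is an isomorphism, which by saturation of the weak equivalences is equivalent to $f$ being a weak equivalence.

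For (iii), I would first note that $\tau_0 \argb{\ul{\mathcal{M}}_\mathrm{f}}$ is legitimate: $\mathcal{M}_\mathrm{f}$ is cartesian closed by \autoref{prop:Hurewicz.homotopy.category}(i) (here all fibrant objects are cofibrant), $\tau_0$ preserves finite products on $\mathcal{M}_\mathrm{f}$, and for fibrant $X, Y$ the exponential $\xHom{X}{Y}$ computed in $\mathcal{M}_\mathrm{f}$ coincides with the one in $\mathcal{M}$. The induced functor is then the inclusion of a full subcategory whose hom-sets are literally the sets $\tau_0 \xHom{X}{Y}$, so it is fully faithful. For the left adjoint I would take fibrant replacement $X \mapsto RX$, with unit the class of $r_X : X \to RX$ in $\tau_0 \xHom{X}{RX}$; using the identification from (ii), composition with this unit is $\Hom[\Ho \mathcal{M}]{\gamma r_X}{\gamma Z}$, a bijection since $\gamma r_X$ is an isomorphism. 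Thus $RX$ is the reflection of $X$ and the inclusion is reflective, which is exactly the assertion.

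The main obstacle is the binary-product step in (i): the facts quoted from the preceding theorem only cover products of fibrant objects, so the real content is that multiplication by a cofibrant object preserves weak equivalences, and this is where the cartesian axiom and the computation $\emptyset \times W \cong \emptyset$ have to be brought to bear. Everything downstream is a formal consequence of the cartesian closed structure on $\Ho \mathcal{M}$ and the Yoneda lemma.
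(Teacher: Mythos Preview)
Your proof is correct and, for (i) and (iii), essentially identical to the paper's: both arguments for (i) reduce to showing that $\blank \times Y$ preserves weak equivalences when $Y$ is cofibrant (the paper packages this as ``$\blank \times Y$ is left Quillen, so Ken Brown applies'', you compute the relevant pushout-products directly), and (iii) is handled in both cases by fibrant replacement.

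For (ii) there is a small but genuine difference in route. The paper does not make your identification $\tau_0 \xHom{Y}{Z} \cong \Hom[\Ho \mathcal{M}]{\gamma Y}{\gamma Z}$; instead it first passes to a fibrant replacement $R f : R X \to R Y$, observes that the bijectivity hypothesis is inherited by $R f$, applies Yoneda inside $\tau_0 \argb{\ul{\mathcal{M}}_\mathrm{f}}$ (using \autoref{prop:Hurewicz.homotopy.category} to know this category is $\Ho \mathcal{M}_\mathrm{f}$), and finishes with 2-out-of-3. Your approach is slightly slicker: by going straight to $\Ho \mathcal{M}$ via the cartesian closed structure you avoid the detour through fibrant replacement and 2-out-of-3, at the cost of invoking saturation of the weak equivalences. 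Both are short and standard; the paper's version has the mild advantage of staying close to the category $\tau_0 \argb{\ul{\mathcal{M}}_\mathrm{f}}$ that is actually used downstream.
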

\begin{proof}
(i). It suffices to show that $\gamma : \mathcal{M} \to \Ho \mathcal{M}$ preserves finite products; that $\tau_0 : \mathcal{M} \to \cat{\Set}$ preserves finite products will follow automatically. It is not hard to check that $\gamma : \mathcal{M} \to \Ho \mathcal{M}$ preserves terminal objects for all model categories $\mathcal{M}$, and we will now show that $\gamma$ preserves binary products. 

The definition of cartesian model category implies that, for all cofibrant objects $Y$, the functor $\blank \times Y : \mathcal{M} \to \mathcal{M}$ is a left Quillen functor. Since we are assuming that all objects are cofibrant, Ken Brown's lemma implies that $\blank \times Y$ preserves weak equivalences. We then deduce that $\blank \times \blank : \mathcal{M} \times \mathcal{M} \to \mathcal{M}$ preserves all weak equivalences, and hence that it is its own left derived functor. Thus, the localisation functor $\gamma : \mathcal{M} \to \Ho \mathcal{M}$ indeed preserves binary products.

\bigskip\noindent
(ii). If $f : X \to Y$ is a weak equivalence, then $\xHom{f}{Z} : \xHom{Y}{Z} \to \xHom{X}{Z}$ is a weak equivalence for all fibrant objects $Z$, and hence $\tau_0 \xHom{f}{Z}$ must be a bijection. Conversely, suppose $\tau_0 \xHom{f}{Z}$ is a bijection for all fibrant objects $Z$. Let $R : \mathcal{M} \to \mathcal{M}$ be a fibrant replacement functor for $\mathcal{M}$. Then, the morphism $R f : R X \to R Y$ also induces bijections $\tau_0 \xHom{R f}{Z}$ for all fibrant objects $Z$, and since $R X$ and $R Y$ are in $\mathcal{M}_\mathrm{f}$, the Yoneda lemma implies that $R f : R X \to R Y$ is sent to an isomorphism in $\tau_0 \argb{\ul{\mathcal{M}}_\mathrm{f}}$, and hence must be a weak equivalence in $\mathcal{M}_\mathrm{f}$. The 2-out-of-3 property of weak equivalences then implies $f : X \to Y$ is a weak equivalence in $\mathcal{M}$.

\bigskip\noindent
(iii). It is clear that the induced functor $\tau_0 \argb{\ul{\mathcal{M}}_\mathrm{f}} \to \tau_0 \argb{\ul{\mathcal{M}}}$ is indeed fully faithful, and it is not hard to check that a fibrant replacement functor provides the required left adjoint $\tau_0 \argb{\ul{\mathcal{M}}} \to \tau_0 \argb{\ul{\mathcal{M}}_\mathrm{f}}$.
\end{proof}

\section{Quasicategories}

We begin with a bit of category-theoretic folklore.

\begin{thm}
\needspace{2.5\baselineskip}
The following data define a cartesian model structure on $\cat{\Cat}$:
\begin{itemize}
\item The weak equivalences are the \strong{categorical equivalences}, \ie functors that are fully faithful and essentially surjective on objects.

\item The cofibrations are the \strong{isocofibrations}, \ie functors that are injective on objects.

\item The fibrations are the \strong{isofibrations}, \ie functors $F : \mathcal{C} \to \mathcal{D}$ with the isomorphism lifting property, \ie for all objects $C$ in $\mathcal{C}$ and all isomorphisms $f : F C \to D$ in $\mathcal{D}$, there exists an isomorphism $\tilde{f} : C \to C'$ in $\mathcal{C}$ such that $F C' = D$ and $F \tilde{f} = f$.
\end{itemize}
We refer to this as the \strong{natural model structure} on $\cat{\Cat}$.
\end{thm}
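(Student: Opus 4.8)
The plan is to establish the model-structure axioms by exhibiting two weak factorisation systems, and then to verify the compatibility axiom by a pushout--product argument. Throughout I write $\mathbf{1}$ for the terminal category, $\mathbf{2}$ for the arrow category, and $\mathbf{J}$ for the free-living isomorphism (two objects and a unique isomorphism between them). The first step is to record the elementary characterisations on which everything rests. A functor is a trivial fibration (an isofibration that is a categorical equivalence) if and only if it is surjective on objects and fully faithful: this is a short diagram chase using isomorphism-lifting to upgrade essential surjectivity to honest surjectivity. Dually, a functor is an isofibration precisely when it has the right lifting property against $\mathbf{1} \embedinto \mathbf{J}$. Since categorical equivalences manifestly satisfy two-out-of-three, and $\cat{\Cat}$ is complete and cocomplete, and the three classes are each closed under retracts by inspection, it remains to produce the two factorisations and the two lifting properties.

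For the factorisation into a cofibration followed by a trivial fibration I would use the \emph{graph construction}: given $F : \mathcal{C} \to \mathcal{D}$, let $\mathcal{E}$ have object set $\ob \mathcal{C} \sqcup \ob \mathcal{D}$ and hom-sets $\mathcal{E}(x, y) = \mathcal{D}(P x, P y)$, where $P$ sends the $\mathcal{C}$-copy to $\mathcal{D}$ via $F$ and the $\mathcal{D}$-copy identically; then $\mathcal{C} \to \mathcal{E}$ is injective on objects and $P : \mathcal{E} \to \mathcal{D}$ is surjective on objects and fully faithful. For the factorisation into a trivial cofibration followed by a fibration I would use the \emph{iso-comma construction}, whose objects are triples $(C, D, \delta)$ with $\delta : F C \to D$ an isomorphism; the inclusion $C \mapsto (C, F C, \id)$ is injective on objects and an equivalence, and projection to $D$ is an isofibration. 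Both are visibly functorial. The two lifting properties are then direct: a cofibration lifts against a trivial fibration by defining the lift on the objects outside the image using surjectivity and on morphisms using full faithfulness; a trivial cofibration $i : Z \to W$ lifts against an isofibration by choosing, for each object, an object of $Z$ and an isomorphism witnessing essential surjectivity (taken to be identities on the image of $i$) and then transporting along isomorphism-lifts. With both weak factorisation systems in hand, the model structure follows from the standard characterisation of a model structure as a pair of weak factorisation systems sharing a class of weak equivalences that satisfies two-out-of-three.

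It remains to check that the structure is cartesian. The terminal object $\mathbf{1}$ is cofibrant because $\emptyset \to \mathbf{1}$ is vacuously injective on objects, so the content is the pullback--hom axiom. By the cartesian-closed adjunction, the assertion that $q : \xHom{W}{X} \to L \argp{i, p}$ is a fibration, and a trivial fibration when $i$ or $p$ is a weak equivalence, is equivalent to two statements about the pushout--product $\square$: that $i \mathbin{\square} j$ is a cofibration whenever $i, j$ are cofibrations (PP1), and that $i \mathbin{\square} j$ is a trivial cofibration whenever one of $i, j$ is a trivial cofibration and the other a cofibration (PP2). Indeed, a lifting problem against $q$ transposes to one against $p$, so ``$q$ is a fibration'' reduces (via $p$ having the right lifting property against trivial cofibrations) to (PP2), ``$q$ is a trivial fibration when $p$ is'' reduces to (PP1), and ``$q$ is a trivial fibration when $i$ is a trivial cofibration'' reduces to (PP2). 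Statement (PP1) is immediate: the functor $\ob : \cat{\Cat} \to \cat{\Set}$ has both adjoints, hence preserves the product and the pushout involved, and the pushout--product of two injections of sets is an injection.

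The main obstacle is (PP2). My plan is first to observe that a trivial cofibration $i : Z \to W$ is always a strong deformation retract inclusion: injectivity on objects lets one build a retraction $r$ with $r i = \id$ together with a natural isomorphism $\theta : \id_W \Rightarrow i r$ that is the identity on the image of $i$, so that $\theta i = \id$. Given a further cofibration $j : A \to B$, I would then exhibit a quasi-inverse to $i \mathbin{\square} j : Q \to W \times B$ directly, where $Q$ is the pushout: the composite of $r \times \id_B : W \times B \to Z \times B$ with the canonical inclusion $Z \times B \to Q$ is inverse to $i \mathbin{\square} j$ up to the natural isomorphism $\theta \times \id_B$ on one side, and up to a natural isomorphism $\Phi$ on the other, where $\Phi$ is assembled from $\theta$ on the $W \times A$-cell and from identities on the $Z \times B$-cell. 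The condition $\theta i = \id$ is exactly what makes the two prescriptions agree on the gluing locus $Z \times A$, so they descend along the pushout; hence $i \mathbin{\square} j$ is an equivalence, and being a cofibration by (PP1), a trivial cofibration. The fiddly point to watch is naturality of $\Phi$ across the morphisms introduced by the pushout, which I would sidestep by defining $\Phi$ as a functor $Q \to \xHom{\mathbf{J}}{Q}$ through the universal property of the pushout rather than checking components one at a time.
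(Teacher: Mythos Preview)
Your proposal is correct, but note that the paper does not actually prove this theorem: it simply refers the reader to \citep{Rezk:1996} and marks the proof as omitted. So there is no ``paper's approach'' to compare against beyond the citation.

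That said, your outline is a sound self-contained argument, essentially the folklore proof. A few remarks. Your identification of trivial fibrations with the surjective-on-objects fully faithful functors, and of isofibrations via lifting against $\mathbf{1} \embedinto \mathbf{J}$, is the right starting point. The graph and iso-comma factorisations are standard and functorial as you say. The reduction of the pullback--hom axiom to the pushout--product statements (PP1) and (PP2) is the usual adjunction trick and your bookkeeping is correct. For (PP2), your strong-deformation-retract argument is exactly the clean way to do it: the key identity $\theta i = \id$ is what makes the two pieces of $\Phi$ glue over $Z \times A$, and defining $\Phi$ as a functor $Q \to \xHom{\mathbf{J}}{Q}$ via the pushout's universal property is indeed the right way to avoid the component-level naturality check. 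One could alternatively prove (PP2) by observing that the class of trivial cofibrations is generated (as the left class of a weak factorisation system) by $\mathbf{1} \embedinto \mathbf{J}$ and checking the pushout--product against that single map, but your direct argument is just as short.
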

\begin{proof} \openproof
See \citep{Rezk:1996}.
\end{proof}

\begin{remark}
The natural model structure on $\cat{\Cat}$ makes all objects cofibrant and fibrant, and two functors are homotopic if and only if they are naturally isomorphic.
\end{remark}

To fix notation and terminology, we recall a few definitions and results:

\begin{dfn}
A \strong{quasicategory} is a simplicial set $X$ that satisfies the Boardman condition: for all $n \ge 2$ and all $0 < i < n$, the unique morphism $X \to 1$ has the right lifting property with respect to the (inner) horn inclusion $\Lambda^n_i \embedinto \Delta^n$.
\end{dfn}

\begin{prop}
Let $\nv : \cat{\Cat} \to \cat{\SSet}$ be the functor that sends a small category to its nerve.
\begin{enumerate}[(i)]
\item The functor $\nv$ is fully faithful and preserves colimits for all small filtered diagrams and limits for all diagrams.

\item $\nv$ has a left adjoint, $\tau_1 : \cat{\SSet} \to \cat{\Cat}$.

\item The functor $\tau_1$ preserves finite products.
\end{enumerate}
\end{prop}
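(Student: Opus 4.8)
The three claims concern the nerve functor $\nv : \cat{\Cat} \to \cat{\SSet}$ and its left adjoint $\tau_1$. The plan is to dispatch them in order, with claim (iii) being the substantive one.

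For claim (i), the plan is to recall that the nerve functor is the standard fully faithful embedding realising small categories as those simplicial sets satisfying the strict Segal condition (every inner horn has a \emph{unique} filler). Full faithfulness is immediate from the fact that a functor is determined by its action on objects and morphisms, which are the $0$- and $1$-simplices of the nerve, together with compatibility with composition, which is encoded by the $2$-simplices. For the limit/colimit claim, I would observe that $\nv$ is a right adjoint (having $\tau_1$ as its left adjoint, which is claim (ii)), so it preserves all limits automatically. For filtered colimits, the point is that the nerve $\nv[\mathcal{C}]$ in each simplicial degree $n$ is the hom-set $\Hom[\cat{\Cat}]{[n]}{\mathcal{C}}$ where $[n]$ is the finite ordinal category; since each $[n]$ is a finitely presentable object of $\cat{\Cat}$, the functor $\Hom[\cat{\Cat}]{[n]}{\blank}$ commutes with filtered colimits, and colimits of simplicial sets are computed degreewise, giving the claim.

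For claim (ii), the existence of the left adjoint $\tau_1$ is a formal consequence of the fact that $\cat{\Cat}$ is cocomplete and $\nv$ is accessible (indeed it is a right adjoint in the usual presentation as the nerve--realisation adjunction associated to the cosimplicial object $[\blank] : \cat{\Simplex} \to \cat{\Cat}$). Concretely, $\tau_1$ is the left Kan extension of $[\blank]$ along the Yoneda embedding, which sends a simplicial set $X$ to its \emph{fundamental category} (or homotopy category): objects are the $0$-simplices, morphisms are generated by the $1$-simplices, and relations are imposed by the $2$-simplices. I would cite this as the standard nerve--realisation construction and note that the adjunction $\tau_1 \dashv \nv$ is exactly the counit/unit pair of this Kan extension.

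Claim (iii) is where the real work lies, and I expect it to be the main obstacle, since a left adjoint \emph{a priori} preserves only colimits, not products. The plan is to argue directly on generators. Since every simplicial set is a colimit of representables $\Delta^n$ and $\tau_1$ preserves colimits, while binary products of simplicial sets also distribute over colimits in each variable (as $\cat{\SSet}$ is cartesian closed), it suffices to check that the canonical comparison map $\tau_1 \argp{\Delta^m \times \Delta^n} \to \tau_1 \Delta^m \times \tau_1 \Delta^n$ is an isomorphism, together with preservation of the terminal object. The terminal case is trivial, since $\tau_1 \Delta^0 = [0]$ is terminal in $\cat{\Cat}$. For the binary case, one has $\tau_1 \Delta^n \cong [n]$, so the target is $[m] \times [n]$. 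The hard part is computing $\tau_1 \argp{\Delta^m \times \Delta^n}$: the product $\Delta^m \times \Delta^n$ is the nerve of the poset $[m] \times [n]$, and I would verify that applying $\tau_1$ to this nerve returns $[m] \times [n]$ by exhibiting that its generating graph and relations (read off from the $1$- and $2$-simplices of the prism) present exactly the product poset. The key subtlety is that the comparison need not commute past arbitrary colimits unless the relevant products distribute appropriately, so I would take care to reduce only along colimits of representables in each variable separately, where distributivity of $\times$ over colimits in $\cat{\SSet}$ makes the reduction legitimate.
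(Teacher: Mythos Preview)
Your approach is essentially the same as the paper's: reduce claim (iii) to representables via distributivity of products over colimits, then check the comparison on $\Delta^m \times \Delta^n$. Two small points are worth tightening.

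First, the reduction to representables requires distributivity of $\times$ over colimits not only in $\cat{\SSet}$ but also in $\cat{\Cat}$: after identifying $\tau_1 \argp{\Delta^m \times \Delta^n} \cong \bracket{m} \times \bracket{n}$, you still need $\mathrm{colim}_{i,j} \argp{\bracket{m_i} \times \bracket{n_j}} \cong \argp{\mathrm{colim}_i \bracket{m_i}} \times \argp{\mathrm{colim}_j \bracket{n_j}}$ to recover $\tau_1 X \times \tau_1 Y$. The paper notes this explicitly by invoking cartesian closedness of both categories; you mention only $\cat{\SSet}$.

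Second, your proposed verification that $\tau_1 \argp{\nv{\bracket{m} \times \bracket{n}}} \cong \bracket{m} \times \bracket{n}$ via generators and relations works, but is more effort than needed. Since $\nv$ is fully faithful (your claim (i)), the counit $\tau_1 \nv \to \id_{\cat{\Cat}}$ is an isomorphism, so $\tau_1$ applied to \emph{any} nerve returns the original category. The paper exploits exactly this: once you have observed that $\Delta^m \times \Delta^n \cong \nv{\bracket{m} \times \bracket{n}}$ (because $\nv$ preserves products), the result is immediate without any explicit computation of the prism.
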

\begin{proof}
(i). This is well-known and not hard to check.

\bigskip\noindent
(ii). Apply the accessible adjoint functor theorem, or use the theory of Kan extensions.

\bigskip\noindent
(iii). Since both $\cat{\Cat}$ and $\cat{\SSet}$ are cartesian closed, $\blank \times \blank$ preserves colimits in both variables in both categories. It therefore suffices to check that the canonical morphism $\tau_1 \parens{\Delta^n \times \Delta^m} \to \tau_1 \Delta^n \times \tau_1 \Delta^m$ is an isomorphism for all pairs of natural numbers $n$ and $m$; but this is easy because $\nv : \cat{\Cat} \to \cat{\SSet}$ is fully faithful and each $\Delta^n$ is the nerve of a small category.
\end{proof}
%
%

\begin{remark}
There is a well-known description of $\tau_1 X$ when $X$ is a quasicategory:
\begin{itemize}
\item The objects are the vertices of $X$.

\item The morphisms $x_0 \to x_1$ are the edges $f$ such that $d_1 \argp{f} = x_0$ and $d_0 \argp{f} = x_1$, modulo the equivalence relation $f_0 \sim f_1$ generated by 2-simplices $\sigma$ such that $d_2 \argp{\sigma} = f_0$, $d_1 \argp{\sigma} = f_1$, and $d_0 \argp{\sigma} = s_0 \argp{x_1}$.

\item The identity morphism $x \to x$ is (the equivalence class of) the edge $s_0 \argp{x}$.

\item Composition is defined by 2-simplices: if $\sigma$ is a 2-simplex of $X$ with $d_2 \argp{\sigma} = f_1$ and $d_0 \argp{\sigma} = f_2$, then $f_2 \circ f_1$ is defined to be $d_1 \argp{\sigma}$.
\end{itemize}
Moreover, $X$ is a Kan complex if and only if $X$ is a quasicategory and $\tau_1 X$ is a groupoid. For details, see Propositions 1.2.3.9 and 1.2.5.1 in \citep{HTT}. 
\end{remark}

Let $\tau_0 : \cat{\SSet} \to \cat{\Set}$ be the functor that sends a simplicial set $X$ to the set of isomorphism classes of objects in the category $\tau_1 X$.

\begin{dfn}
A \strong{weak categorical equivalence} is a morphism $F : X \to Y$ in $\cat{\SSet}$ such that the induced maps
\[
\tau_0 \xHom{F}{Z} : \tau_0 \xHom{Y}{Z} \to \tau_0 \xHom{X}{Z}
\]
are bijections for all quasicategories $Z$.
\end{dfn}

\begin{thm}[Joyal]
\needspace{2.5\baselineskip}
The following data define a cartesian model structure on $\cat{\SSet}$:
\begin{itemize}
\item The weak equivalences are the weak categorical equivalences.

\item The cofibrations are the monomorphisms.
\end{itemize}
We refer to this as the \strong{Joyal model structure for quasicategories}, and the fibrant objects in this model structure are the quasicategories.
\end{thm}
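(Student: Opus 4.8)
Since $\cat{\SSet}$ is a presheaf topos, the monomorphisms are precisely the saturation of the set of boundary inclusions $\set{\partial \Delta^n \embedinto \Delta^n}{n \ge 0}$, so they already constitute the cofibrations of a cofibrantly generated weak factorisation system; the entire content of the theorem lies in producing a compatible class of fibrations and in matching the fibrant objects with the quasicategories. The plan is to take the weak categorical equivalences as the prescribed weak equivalences and to invoke a recognition theorem for combinatorial model categories (Jeff Smith's theorem, or equivalently Cisinski's theory of model structures on a presheaf topos with the monomorphisms as cofibrations). Writing $W$ for the class of weak categorical equivalences and $I$ for the boundary inclusions, I would verify: (a)~$W$ has the 2-out-of-3 property and is closed under retracts, which is immediate from the defining condition on $\tau_0 \xHom{\blank}{Z}$; (b)~every morphism with the right lifting property against all monomorphisms lies in $W$, because such a trivial fibration is a strong deformation retract for the interval $J$ introduced below, and $J$-homotopy equivalences are inverted by $\tau_0 \xHom{\blank}{Z}$; (c)~$W$ is accessible and satisfies the solution-set condition at $I$, by a standard cardinality argument testing only against quasicategories of bounded size; and (d)~the intersection of $W$ with the monomorphisms is closed under pushout and transfinite composition. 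Granting (a)--(d), the recognition theorem furnishes a cofibrantly generated model structure whose cofibrations are the monomorphisms and whose weak equivalences are the weak categorical equivalences.

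To interpret $W$ and to pin down the fibrant objects, I would introduce the interval $J$, the nerve of the contractible groupoid on two objects, together with the associated notion of $J$-homotopy of maps $X \to Y$, namely extension over $J \times X$. The essential preliminary is that exponentials of quasicategories are quasicategories, which follows from the stability of the inner anodyne maps under the pushout-product with monomorphisms; consequently, for any quasicategory $Z$ the set $\tau_0 \xHom{X}{Z}$ is exactly the quotient of $\Hom[\cat{\SSet}]{X}{Z}$ by $J$-homotopy, so a weak categorical equivalence is precisely a morphism inverted on $J$-homotopy classes of maps into every quasicategory. The implication \enquote{fibrant $\implies$ quasicategory} is then the easy half: the inner horn inclusions $\Lambda^n_i \embedinto \Delta^n$ are weak categorical equivalences, since for any quasicategory $Z$ the restriction $\xHom{\Delta^n}{Z} \to \xHom{\Lambda^n_i}{Z}$ is a trivial fibration between quasicategories (again by the pushout-product stability of inner anodyne maps), so every fibrant object has the right lifting property against inner horns. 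The reverse implication, that every quasicategory is fibrant, is the hard direction: it requires that a quasicategory have the right lifting property against \emph{every} trivial cofibration, not merely the inner ones.

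For the cartesian axiom I would verify the pushout-product (Leibniz) form of the compatibility condition, which is the adjoint transpose of the pullback-hom condition in the definition: the pushout-product of two monomorphisms is again a monomorphism, trivial as soon as one factor is. Reducing to generators, the substance is that the pushout-product of an inner horn inclusion with a boundary inclusion is inner anodyne --- exactly the stability already exploited above --- together with the analogous statement for the interval; both are classical combinatorial lemmas on simplicial sets. The cofibrancy of the terminal object is trivial, since $\emptyset \embedinto \Delta^0$ is a monomorphism; in fact every object is cofibrant.

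The decisive obstacle is the identification of the fibrant objects, \ie the proof that every quasicategory is fibrant, and this is bound up with the verification of (d): both come down to describing the trivial cofibrations as the saturation of a concrete set of \enquote{$J$-anodyne} generators --- the inner horn inclusions together with maps built from the endpoint inclusion $\Delta^0 \embedinto J$ --- since a saturated class is automatically closed under pushout and transfinite composition and its injective objects are then precisely the quasicategories. The key combinatorial input is that an edge of a quasicategory which becomes invertible in the homotopy category can be straightened to a map out of $J$, so that $J$-homotopy is a congruence; this is Joyal's theory of special outer horns, and it is the technical heart on which the whole theorem rests. Once it is in hand, the accessibility and pushout-product verifications, while laborious, are comparatively routine.
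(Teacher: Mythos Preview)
The paper does not prove this theorem: the proof box is marked open and the reader is simply referred to Theorem~6.12 of Joyal's 2008 notes. There is therefore nothing in the paper itself to compare your sketch against.

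Measured against Joyal's actual argument, your outline is sound in its broad strokes. The Cisinski/Smith recognition principle is a legitimate route to the model structure, the cartesian axiom does reduce to the pushout-product stability of inner anodyne maps together with the analogous statement for the interval, and you have correctly located the technical heart of the matter in the theory of special outer horns --- equivalently, the fact that an edge of a quasicategory which becomes invertible in $\tau_1$ extends to a map out of $J = \nv{\mbfI \bracket{1}}$. Joyal's own presentation in the cited notes is organised somewhat differently: he develops the class of quasi-fibrations (inner fibrations with the lifting property against $\set{0} \hookrightarrow J$) first and assembles the model structure directly from that, rather than by invoking Smith's theorem. But the combinatorial core is the same one you isolate.

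One caveat: your final paragraph understates the work that remains after the special-horn lemma is in hand. Knowing that quasicategories lift against the explicit ``$J$-anodyne'' generators does not by itself show that they lift against every monomorphism in $W$, nor that $W \cap \mathrm{Cof}$ is the saturation of those generators. A further step is needed --- for instance, that an inner isofibration between quasicategories which is also a weak categorical equivalence is necessarily a trivial fibration --- and this is precisely where condition~(d) and the identification of fibrant objects are proved simultaneously rather than one being deduced from the other. The argument is not long once all the pieces are assembled, but calling it ``comparatively routine'' is optimistic; in a full write-up it deserves to be stated and proved as a separate lemma.
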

\begin{proof} \openproof
See Theorem 6.12 in \citep{Joyal:2008}.
\end{proof}

\begin{prop}
\label{prop:truncation-nerve.adjunction.for.quasicategories}
The adjunction displayed below
\[
\tau_1 \dashv \nv : \cat{\Cat} \to \cat{\SSet}
\]
is a Quillen adjunction with respect to the natural model structure on $\cat{\Cat}$ and the Joyal model structure on $\cat{\SSet}$.
\end{prop}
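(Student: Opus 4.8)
The plan is to verify the adjunction against one of the standard equivalent characterisations of a Quillen adjunction. Recall that an adjunction $L \dashv R$ is a Quillen adjunction as soon as the left adjoint $L$ preserves cofibrations and the right adjoint $R$ preserves fibrations: transposing lifting problems across the adjunction shows that ``$L$ preserves cofibrations'' is equivalent to ``$R$ preserves trivial fibrations'', so these two hypotheses together amount to saying that $R$ preserves both fibrations and trivial fibrations. I would therefore reduce the proposition to two assertions: that $\tau_1$ carries monomorphisms to isocofibrations, and that $\nv$ carries isofibrations to fibrations in the Joyal model structure.

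The first assertion is essentially immediate once one observes that the object set of $\tau_1 X$ is naturally the set $X_0$ of vertices of $X$. Indeed, both $\ob \circ \tau_1$ and the vertex functor $X \mapsto X_0$ are colimit-preserving functors $\cat{\SSet} \to \cat{\Set}$ --- the former because $\tau_1$ is a left adjoint and $\ob : \cat{\Cat} \to \cat{\Set}$ itself has a right adjoint (the chaotic-category functor) --- and they agree on the representables $\Delta^n$; hence they agree in general. Since a monomorphism of simplicial sets is injective in each degree, it is in particular injective on vertices, so its image under $\tau_1$ is injective on objects, \ie an isocofibration. Thus $\tau_1$ preserves cofibrations.

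The substantive step is the second assertion. Every object of $\cat{\Cat}$ is fibrant, so every fibration there is a map between fibrant objects, and its nerve is a map between nerves, hence between quasicategories. I would then invoke Joyal's characterisation of the fibrations between quasicategories: such a map is a fibration precisely when it has the right lifting property against the inner horn inclusions $\Lambda^n_i \embedinto \Delta^n$ (for $0 < i < n$) and against the endpoint inclusion $\Delta^0 \embedinto J$, where $J$ is the nerve of the free-standing isomorphism (the groupoid on two objects with a single isomorphism between them). The inner-horn condition holds automatically for the nerve of any functor $F : \mathcal{C} \to \mathcal{D}$, because nerves admit unique fillers for inner horns: given an inner horn in $\nv \mathcal{C}$ together with a chosen filler of its image in $\nv \mathcal{D}$, the unique filler upstairs is carried by $\nv F$ to a filler of that image, which by uniqueness downstairs must be the chosen one, so it solves the lifting problem. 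For the endpoint inclusion, maps $J \to \nv \mathcal{C}$ correspond under the adjunction to isomorphisms in $\mathcal{C}$, and the lifting problem against $\Delta^0 \embedinto J$ unwinds to exactly the isomorphism-lifting property defining an isofibration. Hence the nerve of an isofibration satisfies both conditions and is a Joyal fibration.

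The only place where more than routine unwinding of definitions is needed is the appeal to Joyal's description of the fibrations between fibrant objects; for this I would cite \citep{Joyal:2008}, and \citep{HTT} for the elementary facts that nerves are $2$-coskeletal quasicategories with unique inner horn fillers and that the objects of $\tau_1 X$ are the vertices of $X$. With those inputs in hand, the two assertions above combine to give the Quillen adjunction.
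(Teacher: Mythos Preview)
Your argument is correct. The paper itself does not give a proof here at all: it simply refers the reader to Proposition~6.14 of \citep{Joyal:2008} and moves on. By contrast, you unpack the content of that reference, reducing the statement to the two concrete checks that $\tau_1$ sends monomorphisms to functors injective on objects (via the identification $\ob \tau_1 X \cong X_0$) and that $\nv$ sends isofibrations to Joyal fibrations (via Joyal's characterisation of fibrations between quasicategories as inner fibrations having the right lifting property against $\Delta^0 \hookrightarrow J$). Both checks are handled correctly, and your use of the mixed criterion ``$L$ preserves cofibrations and $R$ preserves fibrations'' is a legitimate shortcut. The only external input you genuinely need is that characterisation of Joyal fibrations between fibrant objects, which is precisely what lives in the cited part of \citep{Joyal:2008}; so your route and the paper's citation ultimately rest on the same result, but yours makes the dependence explicit.
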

\begin{proof} \openproof
See Proposition 6.14 in \citep{Joyal:2008}.
\end{proof}

\begin{cor}
\needspace{3.0\baselineskip}
Let $\cat{\Qcat}$ be the full subcategory of $\cat{\SSet}$ spanned by the quasicategories.
\begin{enumerate}[(i)]
\item $\cat{\Qcat}$ is closed under small products and exponential objects in $\cat{\SSet}$; in particular, $\cat{\Qcat}$ is a cartesian closed category.

\item Let $\Ho \cat{\Qcat}$ be the localisation of $\cat{\Qcat}$ at the weak categorical equivalences. The localisation functor $\cat{\Qcat} \to \Ho \cat{\Qcat}$ preserves small products and exponential objects.

\item If $X$ and $Y$ are quasicategories, then the categorical equivalence class of $\tau_1 \xHom{X}{Y}$ depends only on the weak categorical equivalence classes of $X$ and $Y$. 
\end{enumerate}
\end{cor}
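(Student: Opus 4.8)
The key observation is that $\cat{\Qcat}$ is precisely the subcategory $\mathcal{M}_{\mathrm f}$ of fibrant objects of the Joyal model structure $\mathcal{M}=\cat{\SSet}$, and that in this model structure every object is cofibrant: the cofibrations are the monomorphisms, and $\emptyset\to X$ is a monomorphism for every simplicial set $X$. In particular all fibrant objects are cofibrant, so the general results of Section~1—above all \autoref{prop:Hurewicz.homotopy.category}—apply, and all three claims reduce to translating those results into the present language. For (i), closure of $\cat{\Qcat}$ under small products in $\cat{\SSet}$ is part~(i) of the general theorem on cartesian model categories above; and since every object is cofibrant, the same theorem guarantees that $\xHom{X}{Y}$ is fibrant—hence a quasicategory—as soon as $Y$ is, which is closure under exponential objects. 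That $\cat{\Qcat}$ is itself cartesian closed is then exactly \autoref{prop:Hurewicz.homotopy.category}(i), whose hypothesis we have just verified.

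For (ii), I would identify the localisation functor $\cat{\Qcat}\to\Ho\cat{\Qcat}$ with the functor studied in \autoref{prop:Hurewicz.homotopy.category}. By definition $\Ho\cat{\Qcat}$ is the localisation $\Ho\mathcal{M}_{\mathrm f}$, and part~(iii) of that proposition exhibits an isomorphism $\Ho\mathcal{M}_{\mathrm f}\cong\tau_0\argb{\ul{\mathcal{M}_{\mathrm f}}}$ under which the localisation functor becomes the $\chi$-induced functor $\mathcal{M}_{\mathrm f}\to\tau_0\argb{\ul{\mathcal{M}_{\mathrm f}}}$ of part~(ii). Since that functor preserves small products and exponential objects, so does the localisation functor, which is the assertion.

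For (iii), the plan is to first prove the corresponding statement for $\xHom{X}{Y}$ itself and then transport it along $\tau_1$. Because $\Ho\cat{\Qcat}$ is cartesian closed and $\gamma$ preserves exponentials by (ii), and because for the fibrant–cofibrant objects of $\cat{\Qcat}$ two quasicategories are weakly categorically equivalent if and only if they become isomorphic under $\gamma$, replacing $X,Y$ by quasicategories $X',Y'$ in the same weak categorical equivalence classes yields $\gamma X\cong\gamma X'$ and $\gamma Y\cong\gamma Y'$; functoriality of $\xHom{\blank}{\blank}$ in $\Ho\cat{\Qcat}$ then gives $\gamma\xHom{X}{Y}\cong\gamma\xHom{X'}{Y'}$, so $\xHom{X}{Y}$ and $\xHom{X'}{Y'}$ are weakly categorically equivalent. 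Finally I would apply $\tau_1$: by \autoref{prop:truncation-nerve.adjunction.for.quasicategories} the adjunction $\tau_1\dashv\nv$ is Quillen, so $\tau_1$ is a left Quillen functor and, by Ken Brown's lemma together with the fact that every object of $\cat{\SSet}$ is cofibrant, sends every weak categorical equivalence to a categorical equivalence. Applying $\tau_1$ to a zigzag of weak categorical equivalences connecting $\xHom{X}{Y}$ and $\xHom{X'}{Y'}$ therefore produces a zigzag of categorical equivalences connecting $\tau_1\xHom{X}{Y}$ and $\tau_1\xHom{X'}{Y'}$, which is the claim.

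The main obstacle is not any single hard estimate but rather the two bookkeeping identifications on which (ii) and (iii) rest: that the localisation functor really is the $\chi$-induced functor of \autoref{prop:Hurewicz.homotopy.category}(ii) under the isomorphism of part~(iii), and the Whitehead-type statement that, for fibrant–cofibrant objects, isomorphism after $\gamma$ is detected by an honest weak equivalence. Both are standard for model categories in which all objects are cofibrant, but they are exactly the points at which a hasty argument could slip.
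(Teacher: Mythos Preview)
Your proposal is correct and follows essentially the same approach as the paper: the paper's proof is literally ``Apply \autoref{prop:Hurewicz.homotopy.category}'', and you have simply unwound what that entails, verifying the hypothesis that all fibrant objects are cofibrant and spelling out the extra step for~(iii) that $\tau_1$ sends weak categorical equivalences to categorical equivalences via \autoref{prop:truncation-nerve.adjunction.for.quasicategories} and Ken Brown. The bookkeeping identifications you flag as potential obstacles are indeed routine and are exactly what the paper is tacitly invoking.
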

\begin{proof}
Apply \autoref{prop:Hurewicz.homotopy.category}.
\end{proof}

\begin{lem}
\label{lem:weak.categorical.equivalences}
Let $F : X \to Y$ be a morphism in $\cat{\SSet}$. The following are equivalent:
\begin{enumerate}[(i)]
\item For all quasicategories $Z$, the induced morphisms
\[
\xHom{F}{Z} : \xHom{Y}{Z} \to \xHom{X}{Z}
\]
are weak categorical equivalences.

\item For all quasicategories $Z$, the induced functors
\[
\tau_1 \xHom{F}{Z} : \xHom{Y}{Z} \to \xHom{X}{Z}
\]
are categorical equivalences.

\item $F : X \to Y$ is a weak categorical equivalence.
\end{enumerate}
\end{lem}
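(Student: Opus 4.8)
The plan is to run the cycle of implications (iii) $\Rightarrow$ (i) $\Rightarrow$ (ii) $\Rightarrow$ (iii), exploiting that the Joyal model structure is a cartesian model structure in which every simplicial set is cofibrant and the quasicategories are exactly the fibrant objects. The first thing to record is that all the objects occurring in (i) and (ii) really are quasicategories: for any simplicial set $W$ and any quasicategory $Z$, the exponential $\xHom{W}{Z}$ is fibrant, by part (i) of the earlier theorem on cartesian model categories (with $W$ cofibrant and $Z$ fibrant). In particular $\xHom{X}{Z}$ and $\xHom{Y}{Z}$ are quasicategories, so $\tau_1$ may be applied to them and $\tau_0$ computes the isomorphism classes of objects of their fundamental categories.

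For (iii) $\Rightarrow$ (i), I would fix a quasicategory $Z$ and argue that the contravariant functor $\xHom{\blank}{Z}$ preserves weak categorical equivalences. Instantiating the compatibility axiom with the fibration $p : Z \to 1$ and an arbitrary cofibration $i : A \to B$ collapses the pullback $L\argp{i, p}$ to $\xHom{A}{Z}$ and identifies the comparison map $q$ with $\xHom{i}{Z} : \xHom{B}{Z} \to \xHom{A}{Z}$; hence $\xHom{i}{Z}$ is a trivial fibration whenever $i$ is a trivial cofibration. Since every simplicial set is cofibrant, Ken Brown's lemma (in its contravariant form) upgrades this to the statement that $\xHom{\blank}{Z}$ sends every weak categorical equivalence to one, and applying this to $F$ yields (i).

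For (i) $\Rightarrow$ (ii), I would just apply $\tau_1$. By \autoref{prop:truncation-nerve.adjunction.for.quasicategories} the functor $\tau_1$ is left Quillen, so Ken Brown's lemma shows it preserves weak equivalences between cofibrant objects; as all simplicial sets are cofibrant, it carries the weak categorical equivalence $\xHom{F}{Z}$ to a categorical equivalence $\tau_1 \xHom{F}{Z}$, which is (ii). For (ii) $\Rightarrow$ (iii), I would pass to isomorphism classes of objects: a categorical equivalence is essentially surjective and fully faithful, hence induces a bijection on isomorphism classes of objects, and by the definition of $\tau_0$ this bijection is exactly $\tau_0 \xHom{F}{Z} : \tau_0 \xHom{Y}{Z} \to \tau_0 \xHom{X}{Z}$. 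As this holds for every quasicategory $Z$, the morphism $F$ meets the defining condition for a weak categorical equivalence, giving (iii).

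The only genuinely homotopical step is (iii) $\Rightarrow$ (i), and I expect it to be the main obstacle: one must read off from the compatibility axiom that cotensoring into a fixed fibrant object is homotopically well behaved on cofibrant inputs, and take care that Ken Brown's lemma is applied in the correct variance (equivalently, to $\xHom{\blank}{Z}$ regarded as a functor $\cat{\SSet} \to \op{\cat{\SSet}}$, whose weak equivalences are those of $\cat{\SSet}$). It is worth noting why the cycle is routed through $\tau_0$ rather than attempting to deduce (i) from (ii) directly: the functor $\tau_1$ does \emph{not} reflect weak categorical equivalences in general (for instance it sends the non-equivalence $S^2 \to \Delta^0$ of Kan complexes to an isomorphism), so (ii) $\Rightarrow$ (iii) must extract only the bijection on isomorphism classes of objects, which is precisely the data that the definition of weak categorical equivalence via $\tau_0$ demands.
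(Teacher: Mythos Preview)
Your cycle (iii) $\Rightarrow$ (i) $\Rightarrow$ (ii) $\Rightarrow$ (iii) is correct and matches the paper's approach: the paper's one-line proof simply invokes \autoref{prop:Quillen.homotopy.category.as.a.reflective.subcategory} together with the fact that $\tau_1$ preserves weak equivalences, and what you have written is precisely an unwinding of that citation---the cartesian-model-structure axiom plus Ken Brown for (iii) $\Rightarrow$ (i), the left-Quillen property of $\tau_1$ for (i) $\Rightarrow$ (ii), and the passage to $\tau_0$ (which is exactly the definition of weak categorical equivalence) for (ii) $\Rightarrow$ (iii). Your closing remark that one cannot go (ii) $\Rightarrow$ (i) directly because $\tau_1$ fails to reflect weak equivalences is a nice clarification that the paper leaves implicit.
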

\begin{proof}
By \autoref{prop:Quillen.homotopy.category.as.a.reflective.subcategory}, this is a formal consequence of the fact that the Joyal model structure for quasicategories is cartesian and that $\tau_1 : \cat{\SSet} \to \cat{\Cat}$ preserves weak equivalences.
\end{proof}

\begin{remark}
Let $\bicat{\Qcat}$ be the 2-category defined by $\tau_1 \argb{\ul{\cat{\Qcat}}}$. \Autoref{lem:weak.categorical.equivalences} says that a weak categorical equivalence between quasicategories is the same thing as an equivalence in $\bicat{\Qcat}$, and \autoref{prop:Hurewicz.homotopy.category} implies that a parallel pair of morphisms in $\cat{\Qcat}$ become equal in $\Ho \cat{\Qcat}$ if and only if they are isomorphic in $\bicat{\Qcat}$. Thus, we may think of $\bicat{\Qcat}$ as being (a model for) the homotopy bicategory of an $\tuple{\infty, 2}$-category of quasicategories.
\end{remark}

\begin{prop}
\needspace{3.0\baselineskip}
Let $\Ho \cat{\Cat}$ be the localisation of $\cat{\Cat}$ at the categorical equivalences. 
\begin{enumerate}[(i)]
\item The adjunction $\tau_1 \dashv \nv : \cat{\Cat} \to \cat{\Qcat}$ descends to an adjunction of homotopy categories,
\[
\Ho \tau_1 \dashv \Ho \nv : \Ho \cat{\Cat} \to \Ho \cat{\Qcat}
\]
and the functor $\Ho \nv$ is fully faithful.

\item The functor $\Ho \tau_1 : \Ho \cat{\Qcat} \to \Ho \cat{\Cat}$ preserves finite products.

\item $\Ho \tau_1$ induces a $\parens{\Ho \cat{\Cat}}$-enrichment of $\Ho \cat{\Qcat}$.
\end{enumerate}
\end{prop}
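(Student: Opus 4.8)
The plan is to descend the Quillen adjunction of \autoref{prop:truncation-nerve.adjunction.for.quasicategories} directly. First I would observe that, being right Quillen into the Joyal structure, $\nv$ carries fibrant objects to fibrant objects; as every object of $\cat{\Cat}$ is fibrant, $\nv$ restricts to a functor $\cat{\Cat} \to \cat{\Qcat}$, and by the usual consequence of Ken Brown's lemma it preserves weak equivalences between fibrant objects, hence sends categorical equivalences to weak categorical equivalences. Dually, $\tau_1$ preserves all weak equivalences (as used already in the proof of \autoref{lem:weak.categorical.equivalences}). Both functors therefore induce functors on localisations, $\Ho\tau_1 : \Ho\cat{\Qcat} \to \Ho\cat{\Cat}$ and $\Ho\nv : \Ho\cat{\Cat} \to \Ho\cat{\Qcat}$. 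To produce the adjunction I would simply push the unit $X \hoto \nv\tau_1 X$ and counit $\tau_1\nv A \hoto A$ through the localisation functors---both $X$ and the nerve $\nv\tau_1 X$ lie in $\cat{\Qcat}$, so the unit is a morphism there---and note that the triangle identities survive localisation, giving $\Ho\tau_1 \dashv \Ho\nv$. Full faithfulness of $\Ho\nv$ is then immediate: since $\nv$ is fully faithful the counit $\tau_1\nv \hoto \id_{\cat{\Cat}}$ is a natural isomorphism, so its image $\Ho\tau_1 \circ \Ho\nv \cong \id$ is an invertible counit, and a right adjoint with invertible counit is fully faithful.

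For (ii) I would run a short diagram chase over the square $\Ho\tau_1 \circ \gamma = \gamma \circ \tau_1$ of localisation functors (writing $\gamma$ for both), using that $\gamma : \cat{\Qcat} \to \Ho\cat{\Qcat}$ is bijective on objects, so that every object of $\Ho\cat{\Qcat}$ is some $\gamma X$. Here $\gamma : \cat{\Qcat} \to \Ho\cat{\Qcat}$ preserves finite products (the corollary above), $\tau_1$ preserves finite products, and $\gamma : \cat{\Cat} \to \Ho\cat{\Cat}$ preserves finite products by \autoref{prop:Quillen.homotopy.category.as.a.reflective.subcategory}(i), every object of $\cat{\Cat}$ being cofibrant. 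Chasing the canonical comparison maps around the square shows $\Ho\tau_1(\gamma X \times \gamma Y) \to \Ho\tau_1 \gamma X \times \Ho\tau_1 \gamma Y$ and $\Ho\tau_1\, 1 \to 1$ to be isomorphisms, so $\Ho\tau_1$ preserves finite products.

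For (iii) the plan is to feed (ii) into the transport construction of \Sect 1. As the fibrant objects of the Joyal structure are all cofibrant, $\Ho\cat{\Qcat} \cong \tau_0\argb{\ul{\cat{\Qcat}}}$ is cartesian closed (\autoref{prop:Hurewicz.homotopy.category} with \autoref{lem:transported.cartesian.closed.structure}), $\Ho\cat{\Cat}$ has finite products, and $\Ho\tau_1$ preserves them, so the construction preceding \autoref{lem:transported.cartesian.closed.structure} yields a $\parens{\Ho\cat{\Cat}}$-enriched category $\Ho\tau_1\argb{\ul{\Ho\cat{\Qcat}}}$ with hom-objects $\Ho\tau_1\xHom{X}{Y}$. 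To see that its underlying ordinary category is $\Ho\cat{\Qcat}$ I would apply \autoref{lem:enrichment.via.transport}, reducing to the claim that $\Ho\tau_1$ induces bijections $\Hom[\Ho\cat{\Qcat}]{1}{X} \to \Hom[\Ho\cat{\Cat}]{\Ho\tau_1\, 1}{\Ho\tau_1\, X}$; this identification is the step I expect to need the most care. I would verify it by matching both sides with the set of isomorphism classes of objects of $\tau_1 X$: on the left \autoref{prop:Hurewicz.homotopy.category}(iii) gives $\Hom[\Ho\cat{\Qcat}]{1}{X} \cong \tau_0\xHom{1}{X} \cong \tau_0 X$, which is that set by the definition of $\tau_0$; on the right $\Ho\tau_1\, 1$ is terminal and, functors being equal in $\Ho\cat{\Cat}$ precisely when naturally isomorphic, $\Hom[\Ho\cat{\Cat}]{1}{\tau_1 X}$ is the set of objects of $\tau_1 X$ up to isomorphism, with the comparison map sending a point of $X$ to the object it names. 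With the hypothesis of \autoref{lem:enrichment.via.transport} in hand, the induced comparison functor is an isomorphism of categories, exhibiting the desired enrichment.
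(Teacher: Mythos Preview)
Your argument for (i) and (ii) matches the paper's proof essentially line for line: both functors preserve weak equivalences by Ken Brown's lemma (using that every object of $\cat{\Cat}$ is fibrant and every object of $\cat{\SSet}$ is Joyal-cofibrant), the unit and counit descend, and full faithfulness of $\Ho\nv$ follows from the counit of $\tau_1 \dashv \nv$ already being an isomorphism. The product-preservation argument via the commuting square of localisation functors, using that the horizontal arrows are bijective on objects and product-preserving, is exactly the paper's.

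For (iii) you do strictly more than the paper. The paper's proof is a single sentence: since $\Ho\cat{\Qcat}$ is cartesian closed and $\Ho\tau_1$ preserves finite products, the transport construction preceding \autoref{lem:transported.cartesian.closed.structure} immediately yields the $\parens{\Ho\cat{\Cat}}$-enriched category $\Ho\tau_1\argb{\ul{\Ho\cat{\Qcat}}}$. The paper does not pause to verify, via \autoref{lem:enrichment.via.transport}, that the underlying ordinary category of this enrichment is $\Ho\cat{\Qcat}$ itself. Your additional step---identifying both $\Hom[\Ho\cat{\Qcat}]{1}{X}$ and $\Hom[\Ho\cat{\Cat}]{1}{\tau_1 X}$ with the set of isomorphism classes of objects in $\tau_1 X$---is correct and makes the statement sharper; it is the kind of check the paper leaves implicit (cf.\ the remark following \autoref{lem:weak.categorical.equivalences}, where the same identification is tacitly used). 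So your route is not different, just more thorough on this point.
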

\begin{proof}
(i). Since all objects in $\cat{\Cat}$ are fibrant, Ken Brown's lemma implies that $\nv$ sends categorical equivalences in $\cat{\Cat}$ to weak categorical equivalences in $\cat{\Qcat}$; and since all objects in $\cat{\SSet}$ are cofibrant in the Joyal model structure, $\tau_1$ sends weak categorical equivalences in $\cat{\Qcat}$ to categorical equivalences in $\cat{\Cat}$. It follows that there is a well-defined adjunction $\Ho \tau_1 \dashv \Ho \nv$. The counit of this adjunction is a natural isomorphism because the counit of the adjunction $\tau_1 \dashv \nv$ is a natural isomorphism, so we may deduce that $\Ho \nv$ is fully faithful.

\bigskip\noindent
(ii). Consider the following commutative diagram of functors:
\[
\begin{tikzcd}
\cat{\Qcat} \dar[swap]{\tau_1} \rar &
\Ho \cat{\Qcat} \dar{\Ho \tau_1} \\
\cat{\Cat} \rar &
\Ho \cat{\Cat}
\end{tikzcd}
\]
The horizontal arrows are functors that are bijective on objects and preserve finite products, and $\tau_1 : \cat{\Qcat} \to \cat{\Cat}$ preserves finite products, so we deduce that $\Ho \tau_1 : \Ho \cat{\Qcat} \to \Ho \cat{\Cat}$ does as well.

\bigskip\noindent
(iii). This is an immediate consequence of the fact that $\Ho \cat{\Qcat}$ is a cartesian closed category and that $\Ho \tau_1$ preserves finite products.
\end{proof}

A $\parens{\Ho \cat{\Cat}}$-enriched category can be thought of as something like a bicategory without coherence data; indeed, any bicategory gives rise to a $\parens{\Ho \cat{\Cat}}$-enriched category in an obvious way. Moreover:

\begin{thm}
\label{thm:bicategorical.equivalences}
Let $\mathcal{C}$ and $\mathcal{D}$ be two $\parens{\Ho \cat{\Cat}}$-enriched categories, and let $F : \mathcal{C} \to \mathcal{D}$ be a $\parens{\Ho \cat{\Cat}}$-enriched equivalence.
\begin{enumerate}[(i)]
\item If $F : \mathcal{C} \to \mathcal{D}$ underlies a pseudofunctor between bicategories, then that pseudofunctor is a bicategorical equivalence.

\item If $\mathcal{D}$ underlies a bicategory $\mathfrak{D}$, then $\mathcal{C}$ underlies a bicategory $\mathfrak{C}$ such that $F : \mathcal{C} \to \mathcal{D}$ underlies a bicategorical equivalence $\mathfrak{C} \to \mathfrak{D}$.
\end{enumerate}
\end{thm}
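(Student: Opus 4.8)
The plan is to set up a dictionary between the enriched vocabulary and the bicategorical one, read off claim (i) directly from it, and then obtain claim (ii) by transporting the bicategory structure of $\mathfrak{D}$ across $F$ and appealing to claim (i). The dictionary runs as follows: if $\mathfrak{D}$ is a bicategory with underlying $\parens{\Ho \cat{\Cat}}$-enriched category $\mathcal{D}$, then the hom-object $\mathcal{D}(X, Y)$ is literally the hom-category $\mathfrak{D}(X, Y)$; a morphism $X \to Y$ of the underlying ordinary category is an isomorphism class of $1$-cells, with composition induced by horizontal composition modulo $2$-isomorphism; and hence two objects are isomorphic in the underlying category precisely when they are equivalent in $\mathfrak{D}$. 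In the same vein, a $\parens{\Ho \cat{\Cat}}$-enriched functor is enriched-fully-faithful exactly when its action on each hom-category is an equivalence of categories.

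For claim (i), let $\Phi : \mathfrak{C} \to \mathfrak{D}$ be a pseudofunctor whose underlying enriched functor is $F$. Since $F$ is a $\parens{\Ho \cat{\Cat}}$-enriched equivalence it is enriched-fully-faithful and essentially surjective; by the dictionary the former says that each hom-functor $\mathfrak{C}(A, B) \to \mathfrak{D}(\Phi A, \Phi B)$ is an equivalence of categories, \ie that $\Phi$ is a local equivalence, and the latter says that every object of $\mathfrak{D}$ is equivalent to some $\Phi A$, \ie that $\Phi$ is biessentially surjective. I would then invoke the standard characterisation of bicategorical equivalences---a pseudofunctor that is both a local equivalence and biessentially surjective admits a pseudo-inverse---to conclude that $\Phi$ is a bicategorical equivalence. (The pseudo-inverse is built by choosing, for each object of $\mathfrak{D}$, an equivalent object in the image of $\Phi$ together with adjoint-equivalence data on hom-categories; this is the bicategorical analogue of the fact that a fully faithful, essentially surjective functor is an equivalence.)

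For claim (ii), I would transport the structure of $\mathfrak{D}$ along $F$. First, for each pair $A, B$ choose a functor $F_{A, B} : \mathcal{C}(A, B) \to \mathfrak{D}(F A, F B)$ representing the enriched action of $F$; it is an equivalence by enriched-fully-faithfulness, so extend it to an adjoint equivalence with quasi-inverse $G_{A, B}$ and invertible unit and counit. Take the hom-categories of the putative bicategory $\mathfrak{C}$ to be the hom-objects $\mathcal{C}(A, B)$, and define its composition functors and identity $1$-cells by conjugating those of $\mathfrak{D}$ through these equivalences, for instance setting the composition to $G_{A, C} \circ c^{\mathfrak{D}} \circ \parens{F_{B, C} \times F_{A, B}}$ and the identity at $A$ to $G_{A, A}$ applied to the identity $1$-cell of $F A$; enriched-functoriality of $F$ ensures that these representatives lift the composition and identities already prescribed on $\mathcal{C}$. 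The associators and unitors of $\mathfrak{C}$ are then assembled from those of $\mathfrak{D}$ together with the units and counits of the chosen adjoint equivalences, and the compositor $2$-cells promoting $F$ to a pseudofunctor $\Phi : \mathfrak{C} \to \mathfrak{D}$ are supplied by the counits. Once $\Phi$ has been exhibited, claim (i) immediately shows it is a bicategorical equivalence.

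The hard part will be the coherence bookkeeping in claim (ii): one must check that the transported associators and unitors satisfy the pentagon and triangle identities and that the compositors satisfy the pseudofunctor axioms. After unwinding the definitions these reduce to the corresponding coherence in $\mathfrak{D}$ combined with the triangle identities of the chosen adjoint equivalences $F_{A, B} \dashv G_{A, B}$, so the verification is formal but long; the only real care required is to fix all of the adjoint-equivalence data once and for all, so that the resulting $2$-cells are natural and the constructions well defined. This transport step is genuinely necessary and cannot be shortcut through claim (i), since claim (i) presupposes that the bicategory $\mathfrak{C}$ already exists.
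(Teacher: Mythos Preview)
Your argument is correct. Part (i) is essentially the paper's one-line proof unpacked: the paper simply says ``this is just the definition of bicategorical equivalence'', taking for granted the local-equivalence-plus-biessential-surjectivity characterisation that you spell out.

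For part (ii) your route genuinely differs from the paper's. You transport the bicategory structure directly: choose adjoint equivalences $F_{A,B} \dashv G_{A,B}$ on hom-categories, conjugate the composition and identities of $\mathfrak{D}$ through them, and build the associators, unitors, and compositors from the coherence of $\mathfrak{D}$ together with the triangle identities of the chosen adjunctions. The paper instead first factors $F$ as an identity-on-objects equivalence followed by an identity-on-homs equivalence, disposes of the latter trivially, and handles the former by invoking $2$-monad theory: the $2$-category of small $2$-categories with fixed object set $O$ is $2$-monadic over $\bicat{\Cat}^{O \times O}$, pseudoalgebras for this $2$-monad are unbiased bicategories, and pseudoalgebra structures transport along equivalences in the base. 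Your approach is more elementary and self-contained but, as you acknowledge, leaves a long coherence verification; the paper's approach packages that verification into the cited general fact about transport of pseudoalgebra structure, so the coherence comes for free once one accepts the $2$-monadic framework. Both arrive at the same transported bicategory, and indeed the ``transport of pseudoalgebra structure'' result, when unwound, performs exactly the conjugation-by-adjoint-equivalences construction you describe.
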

\begin{proof}
(i). This is just the definition of bicategorical equivalence.

\bigskip\noindent
(ii). By factorising $F : \mathcal{C} \to \mathcal{D}$, we can reduce the problem to the following two cases:
\begin{enumerate}[(a)]
\item $F$ is an identity-on-objects $\parens{\Ho \cat{\Cat}}$-enriched equivalence.

\item $F$ is a $\parens{\Ho \cat{\Cat}}$-enriched equivalence that acts as the identity on hom-objects.
\end{enumerate}
Case (b) is straightforward, so we focus on case (a).

To prove the claim, we will use some techniques from 2-dimensional category theory. We assume for simplicity that $\mathfrak{D}$ is a small bicategory. Recall that a $\cat{\Cat}$-valued graph is a set of objects together with a small category for each ordered pair of objects. Clearly, every small bicategory has an underlying $\cat{\Cat}$-valued graph, and there is an obvious 2-category of $\cat{\Cat}$-valued graphs over a fixed set of objects, say $O$, namely the 2-category $\bicat{\Cat}^{O \times O}$. The following facts are well-known:
\begin{itemize}
\item The 2-category of small 2-categories with object-set $O$ (whose morphisms are the identity-on-objects 2-functors) is 2-monadic over $\bicat{\Cat}^{O \times O}$.

\item Pseudoalgebras for the induced 2-monad on $\bicat{\Cat}^{O \times O}$ are unbiased small bicategories with object-set $O$, and the strong morphisms are identity-on-objects pseudofunctors.

\item Pseudoalgebra structures for any 2-monad can be transported along equivalences.
\end{itemize}
Now, let $O = \ob \mathcal{C}$, let $\mathfrak{C}$ be the $\cat{\Cat}$-valued graph determined by $\mathcal{C}$ and choose, for each pair of objects $X$ and $Y$ in $\mathcal{C}$, a categorical equivalence to represent the hom-object morphism $\Hom[\mathcal{C}]{X}{Y} \to \Hom[\mathcal{D}]{F X}{F Y}$. This defines an equivalence $\tilde{F} : \mathfrak{C} \to \mathfrak{D}$ in the 2-category $\bicat{\Cat}^{O \times O}$, so using the facts recalled above, we deduce that $\mathfrak{C}$ admits the structure of a bicategory making $\tilde{F} : \mathfrak{C} \to \mathfrak{D}$ into a bicategorical equivalence. By examining the construction of the transported bicategory structure on $\mathfrak{C}$, one may then deduce that $F : \mathcal{C} \to \mathcal{D}$ is indeed the $\parens{\Ho \cat{\Cat}}$-enriched functor underlying $\tilde{F} : \mathfrak{C} \to \mathfrak{D}$.
%
%
\end{proof}

\begin{remark}
\label{rem:intrinsicality.of.the.homotopy.bicategory}
It is clear from the construction of the 2-category $\bicat{\Qcat}$ that the $\parens{\Ho \cat{\Cat}}$-enriched version of $\Ho \cat{\Qcat}$ is the $\parens{\Ho \cat{\Cat}}$-enriched category underlying $\bicat{\Qcat}$. Thus, if we accept the following hypotheses,
\begin{itemize}
\item $\Ho \cat{\Qcat}$ is the correct homotopy category of the $\tuple{\infty, 1}$-category of small $\tuple{\infty, 1}$-categories.

\item $\Ho \nv : \Ho \cat{\Cat} \to \Ho \cat{\Qcat}$ is the correct embedding of $\Ho \cat{\Cat}$ into $\Ho \cat{\Qcat}$.
\end{itemize}
then we should also believe that the $\parens{\Ho \cat{\Cat}}$-enriched incarnation of $\Ho \cat{\Qcat}$ underlies the correct homotopy bicategory of the $\tuple{\infty, 2}$-category of small $\tuple{\infty, 1}$-categories\hairspace ---\hairspace however we choose to make those notions precise. In particular, \autoref{thm:bicategorical.equivalences} implies that any such homotopy bicategory must be equivalent to $\bicat{\Qcat}$ as a bicategory.
\end{remark}
%

We finish off this section with a ``transport of structure'' construction that puts a 2-category structure on the full subcategory of cofibrant--fibrant objects in any model category Quillen-equivalent to the Joyal model category.

\begin{lem}
\label{lem:homotopical.equivalences.from.Quillen.equivalences}
Let $\mathcal{M}$ and $\mathcal{N}$ be model categories, and let $\mathcal{M}_\mathrm{cf}$ and $\mathcal{N}_\mathrm{cf}$ be the respective full subcategories of cofibrant--fibrant objects. If $\mathcal{M}$ and $\mathcal{N}$ are Quillen-equivalent, then there exists a functor $F : \mathcal{M}_\mathrm{cf} \to \mathcal{N}_\mathrm{cf}$ with the following properties:
\begin{itemize}
\item $F : \mathcal{M}_\mathrm{cf} \to \mathcal{N}_\mathrm{cf}$ preserves weak equivalences.

\item The induced functor $\Ho F : \Ho \mathcal{M}_\mathrm{cf} \to \Ho \mathcal{N}_\mathrm{cf}$ is a categorical equivalence.
\end{itemize}
\end{lem}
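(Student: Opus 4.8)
The plan is to realise one of the derived equivalences coming from the Quillen equivalence by an honest functor defined at the level of objects, using the functorial factorisations that our model categories are assumed to possess. First I reduce to the case of a single Quillen equivalence: if $\mathcal{M}$ and $\mathcal{N}$ are connected by a zig-zag of Quillen equivalences, I construct a functor for each step and compose them, noting that both preservation of weak equivalences and the property of inducing a categorical equivalence are stable under composition. So suppose $L \dashv R$ is a Quillen equivalence with $L : \mathcal{M} \to \mathcal{N}$ the left adjoint; the case of a step given by a right Quillen functor is handled dually, replacing fibrant replacement by cofibrant replacement.

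Next I define the functor. Let $R_\mathcal{N} : \mathcal{N} \to \mathcal{N}$ be the fibrant replacement functor supplied by the functorial factorisation of $\mathcal{N}$, and set $F = R_\mathcal{N} \circ L$, restricted to $\mathcal{M}_\mathrm{cf}$. I claim $F$ lands in $\mathcal{N}_\mathrm{cf}$: for $X$ in $\mathcal{M}_\mathrm{cf}$ the object $L X$ is cofibrant because $L$ is left Quillen, and the fibrant-replacement map $L X \to R_\mathcal{N} L X$ is an acyclic cofibration, so $R_\mathcal{N} L X$ is both cofibrant and fibrant. To see that $F$ preserves weak equivalences, note that every object of $\mathcal{M}_\mathrm{cf}$ is cofibrant, so Ken Brown's lemma implies that $L$ preserves weak equivalences between them, while fibrant replacement preserves all weak equivalences by the two-out-of-three property applied to the natural transformation $\id \hoto R_\mathcal{N}$.

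It remains to show that $\Ho F : \Ho \mathcal{M}_\mathrm{cf} \to \Ho \mathcal{N}_\mathrm{cf}$ is a categorical equivalence, and this is the crux. I will use the canonical equivalences $\Ho \mathcal{M}_\mathrm{cf} \simeq \Ho \mathcal{M}$ and $\Ho \mathcal{N}_\mathrm{cf} \simeq \Ho \mathcal{N}$ induced by the inclusions of the cofibrant--fibrant subcategories. Under these identifications I expect $\Ho F$ to agree with the total left derived functor $\mathbf{L} L : \Ho \mathcal{M} \to \Ho \mathcal{N}$: on cofibrant--fibrant objects the derived functor is computed by $L$ itself, and post-composing with $R_\mathcal{N}$ does not change the isomorphism class of a morphism in $\Ho \mathcal{N}$, since $L X \to R_\mathcal{N} L X$ is a weak equivalence. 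Because $L \dashv R$ is a Quillen equivalence, $\mathbf{L} L$ is an equivalence of homotopy categories with quasi-inverse $\mathbf{R} R$; hence it is fully faithful and essentially surjective, and therefore so is $\Ho F$, which is precisely what it means to be a categorical equivalence in the natural model structure on $\cat{\Cat}$.

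The main obstacle is this last identification. It is easy to produce \emph{some} functor between homotopy categories and to recognise abstractly that the derived functors are equivalences, but I must verify that the specific point-set-level functor $F$ — the one that genuinely preserves weak equivalences — descends exactly to $\mathbf{L} L$, rather than merely to something equivalent to it. This requires tracking the comparison transformation $\id \hoto R_\mathcal{N}$ and the counit/unit of $L \dashv R$ carefully, rather than appealing to a soft uniqueness argument; everything else in the proof is routine consequence of Ken Brown's lemma and the functoriality of the chosen replacements.
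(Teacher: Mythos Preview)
Your proof is correct and takes essentially the same approach as the paper: reduce to a single Quillen adjunction by induction on the zigzag, compose the Quillen functor with a functorial replacement to land in $\mathcal{N}_\mathrm{cf}$, invoke Ken Brown's lemma for preservation of weak equivalences, and identify the induced functor on homotopy categories with the total derived functor; the paper simply works with the right adjoint and cofibrant replacement where you use the left adjoint and fibrant replacement. Your self-flagged ``obstacle'' is not one: the natural weak equivalence $L \Rightarrow R_{\mathcal{N}} L$ descends to a natural isomorphism $\Ho F \cong \mathbf{L}L$ on $\Ho \mathcal{M}_\mathrm{cf}$, and a functor naturally isomorphic to an equivalence is itself an equivalence, so no finer ``exact'' identification is needed --- which is why the paper dispatches this step in a single clause.
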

\begin{proof}
By induction on the length of the zigzag of Quillen equivalences connecting $\mathcal{M}$ and $\mathcal{N}$, we may reduce the claim to the case where there is either a left or right Quillen equivalence $\mathcal{M} \to \mathcal{N}$; by duality, we may assume without loss of generality that there is a right Quillen equivalence $R : \mathcal{M} \to \mathcal{N}$. 

For each object $N$ in $\mathcal{N}$, choose a functorial fibrant cofibrant replacement $p_N : Q N \to N$. Since $R$ preserves fibrant objects, $Q R M$ is a cofibrant--fibrant object in $\mathcal{N}$ for all fibrant objects $M$ in $\mathcal{M}$. We may then define the functor $F : \mathcal{M}_\mathrm{cf} \to \mathcal{N}_\mathrm{cf}$ by taking $F = Q R$. Ken Brown's lemma says that $R$ preserves weak equivalences between fibrant objects, and $p : Q \hoto \id_\mathcal{N}$ is a natural weak equivalence, so $F : \mathcal{M}_\mathrm{cf} \to \mathcal{N}_\mathrm{cf}$ must preserve all weak equivalences. There is then an induced functor $\Ho F : \Ho \mathcal{M}_\mathrm{cf} \to \Ho \mathcal{N}_\mathrm{cf}$, and it is a categorical equivalence because $R : \mathcal{M} \to \mathcal{N}$ is a right Quillen equivalence.
\end{proof}

\begin{prop}
\needspace{3.0\baselineskip}
Let $\mathcal{C}$ be a category with weak equivalences satisfying the following conditions:
\begin{itemize}
\item A morphism in $\mathcal{C}$ is a weak equivalence if and only if the localisation functor $\mathcal{C} \to \Ho \mathcal{C}$ sends it to an isomorphism.

\item The localisation functor $\mathcal{C} \to \Ho \mathcal{C}$ is full.
\end{itemize}
Let $\mathfrak{D}$ be a 2-category, let $\mathcal{D}$ be its underlying ordinary category, and let $\Ho \mathcal{D}$ be $\tau_0 \argb{\mathfrak{D}}$, \ie the category obtained from $\mathcal{D}$ by identifying parallel pairs of morphisms that are isomorphic. Suppose $F : \mathcal{C} \to \mathcal{D}$ is a functor that sends weak equivalences in $\mathcal{C}$ to equivalences in $\mathfrak{D}$. If the induced functor $\Ho F : \Ho \mathcal{C} \to \Ho \mathcal{D}$ is a categorical equivalence, then:
\begin{enumerate}[(i)]
\item There exist a unique (up to unique isomorphism) 2-category $\mathfrak{C}$ equipped with a bicategorical equivalence $\tilde{F} : \mathfrak{C} \to \mathfrak{D}$ whose underlying functor is $U : \mathcal{C} \to \mathcal{D}$.

\item A morphism in $\mathcal{C}$ is a weak equivalence if and only if it is an equivalence in the 2-category $\mathfrak{C}$.

\item A parallel pair of morphisms in $\mathcal{C}$ are sent to the same morphism in $\Ho \mathcal{C}$ if and only if they are isomorphic in $\mathfrak{C}$.
\end{enumerate}
\end{prop}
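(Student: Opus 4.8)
The plan is to build $\mathfrak{C}$ by pulling the $2$-cells of $\mathfrak{D}$ back along $F$, keeping $\mathcal{C}$ itself as the underlying $1$-category. Concretely, for objects $X$ and $Y$ of $\mathcal{C}$ I would let the hom-category $\mathfrak{C}(X, Y)$ have as objects the morphisms $f : X \to Y$ of $\mathcal{C}$, and as $2$-cells $f \Rightarrow g$ the $2$-cells $F f \Rightarrow F g$ of $\mathfrak{D}$, with vertical composition inherited from $\mathfrak{D}$. Since $F$ is an ordinary functor we have $F(g \circ f) = F g \circ F f$ on the nose, so horizontal composition in $\mathfrak{D}$ restricts to a composition functor $\mathfrak{C}(Y, Z) \times \mathfrak{C}(X, Y) \to \mathfrak{C}(X, Z)$; the associativity, unit, and interchange laws are inherited strictly from $\mathfrak{D}$, so $\mathfrak{C}$ is a strict $2$-category whose underlying ordinary category is exactly $\mathcal{C}$. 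The evident assignment $\tilde F$, equal to $F$ on objects and $1$-cells and sending a $2$-cell $f \Rightarrow g$ to the underlying $2$-cell $F f \Rightarrow F g$, is then a strict $2$-functor whose underlying functor is $F$.

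Next I would verify that $\tilde F$ is a bicategorical equivalence, for which it suffices that it be locally an equivalence of categories and essentially surjective on objects. Local full faithfulness holds by construction, since $\mathfrak{C}(X, Y)$ was defined to have exactly the $2$-cells of $\mathfrak{D}(F X, F Y)$ between the relevant $1$-cells. For local essential surjectivity, note that the composite $\mathcal{C} \to \mathcal{D} \to \Ho \mathcal{D}$ agrees with $\Ho F$ precomposed with the localisation $\mathcal{C} \to \Ho \mathcal{C}$; the latter is full by hypothesis and $\Ho F$ is full, being a categorical equivalence, so every morphism $F X \to F Y$ of $\Ho \mathcal{D} = \tau_0 \argb{\mathfrak{D}}$ is hit. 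That is, every $1$-cell $F X \to F Y$ of $\mathfrak{D}$ is isomorphic to some $F f$. Finally, since isomorphism in $\tau_0 \argb{\mathfrak{D}}$ is the same as equivalence in $\mathfrak{D}$ and $\Ho F$ is essentially surjective, every object of $\mathfrak{D}$ is equivalent to some $F X$, so $\tilde F$ is essentially surjective on objects as well.

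Claims (ii) and (iii) then follow formally. Because $\tilde F$ is a bicategorical equivalence, a morphism $f$ is an equivalence in $\mathfrak{C}$ if and only if $F f$ is an equivalence in $\mathfrak{D}$, hence if and only if $F f$ is invertible in $\tau_0 \argb{\mathfrak{D}} = \Ho \mathcal{D}$; since $\Ho F$ reflects isomorphisms and $f$ is a weak equivalence exactly when it is inverted in $\Ho \mathcal{C}$, this is equivalent to $f$ being a weak equivalence, proving (ii). Likewise, two parallel morphisms $f$ and $g$ are isomorphic in $\mathfrak{C}$ if and only if there is an invertible $2$-cell $F f \Rightarrow F g$, that is, if and only if $F f$ and $F g$ agree in $\tau_0 \argb{\mathfrak{D}}$; as $\Ho F$ is faithful this happens exactly when $f$ and $g$ agree in $\Ho \mathcal{C}$, proving (iii).

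For the uniqueness asserted in (i), suppose $(\mathfrak{C}', \tilde F')$ is another such pair. Any comparison must fix the underlying category $\mathcal{C}$, so I would define an isomorphism $\Phi : \mathfrak{C} \to \mathfrak{C}'$ that is the identity on objects and $1$-cells; on $2$-cells it is forced to be $(\tilde F'_{X,Y})^{-1} \circ \tilde F_{X,Y}$, which is a bijection because $\tilde F'$ is locally fully faithful. This exhibits $\Phi$ as the unique possible comparison, and checking that it respects the $2$-categorical structure reduces to the functoriality of $\tilde F$ and $\tilde F'$. I expect this last step to be the main obstacle: reconciling the forced $2$-cell bijection with the coherence data of the comparison pseudofunctors, and confirming that horizontal composition really is transported strictly so that $\Phi$ is a genuine isomorphism of $2$-categories. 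Everything else is a formal consequence of local full faithfulness together with the two fullness hypotheses; and the existence half of (i) can alternatively be extracted from \autoref{thm:bicategorical.equivalences}, with the pullback description above serving to identify the transported structure explicitly.
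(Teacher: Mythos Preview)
Your proof is correct and follows essentially the same approach as the paper: both build $\mathfrak{C}$ by declaring the $2$-cells $f \Rightarrow g$ to be the $2$-cells $Ff \Rightarrow Fg$ of $\mathfrak{D}$ and transporting composition, then deduce (ii) and (iii) from the fact that $\tilde{F}$ is a bicategorical equivalence together with $\Ho F$ being an equivalence. You are in fact more explicit than the paper in two places: you spell out why the fullness hypotheses give local essential surjectivity of $\tilde{F}$ (the paper packages this into a terse ``key observation'' about lifting categories along object maps that hit every isomorphism class), and you attempt the uniqueness clause in (i) directly rather than invoking that same observation hom-category by hom-category.
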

\begin{proof}
(i). The key observation is that, given any category $\mathcal{A}$ and any map $u : X \to \ob \mathcal{A}$ such that the composite $X \stackrel{u}{\to} \ob \mathcal{A} \to \tau_0 \mathcal{A}$ is surjective, there is a unique (up to unique isomorphism) category $\tilde{\mathcal{A}}$ equipped with a categorical equivalence $\tilde{u} : \tilde{\mathcal{A}} \to \mathcal{A}$ such that $\ob \tilde{u} : \ob \tilde{\mathcal{A}} \to \mathcal{A}$ is the map $u : X \to \ob \mathcal{A}$. It is now clear how to construct the 2-category $\mathfrak{C}$: given a parallel pair $f_0, f_1 : A \to B$ in $\mathcal{C}$, we define the set of natural transformations $f_0 \hoto f_1$ in $\mathfrak{C}$ to be the set of natural transformations $F f_0 \hoto F f_1$ in $\mathfrak{D}$, and we define horizontal and vertical composition by transport of structure. There is then an obvious bicategorical equivalence $\tilde{F} : \mathfrak{C} \to \mathfrak{D}$ whose underlying functor is $F : \mathcal{C} \to \mathcal{D}$.

\bigskip\noindent
(ii). Let $f : A \to B$ be a morphism in $\mathcal{C}$. First, suppose $f$ is a weak equivalence in $\mathcal{C}$. Then, $F f$ is an equivalence in $\mathfrak{D}$. Since $\tilde{F} : \mathfrak{C} \to \mathfrak{D}$ is a bicategorical equivalence, we deduce that $f$ is an equivalence in the 2-category $\mathfrak{C}$.

Conversely, suppose $f$ is an equivalence in the 2-category $\mathfrak{C}$. Then, $F f$ is an equivalence in the 2-category $\mathfrak{D}$, and hence is an isomorphism in $\Ho \mathcal{D}$. Since $\Ho F : \Ho \mathcal{C} \to \Ho \mathcal{D}$ is a categorical equivalence, $f$ must be an isomorphism in $\Ho \mathcal{C}$, so we deduce that $f$ is a weak equivalence in $\mathcal{C}$.

\bigskip\noindent
(iii). The proof is similar to that of claim (ii).
\end{proof}

\begin{cor}
Let $\mathcal{M}$ be a model category and let $\mathcal{M}_\mathrm{cf}$ be the full subcategory of cofibrant--fibrant objects in $\mathcal{M}$. If $\mathcal{M}$ is Quillen-equivalent to the Joyal model category for quasicategories, then there exist a 2-category $\mathfrak{M}$ and a 2-functor $\tilde{F} : \mathfrak{M} \to \bicat{\Qcat}$ with the following properties:
\begin{itemize}
\item $\tilde{F} : \mathfrak{M} \to \bicat{\Qcat}$ is a bicategorical equivalence.

\item The underlying functor of $\tilde{F}$ is a functor $F : \mathcal{M}_\mathrm{cf} \to \cat{\Qcat}$ that preserves weak equivalences.

\item The induced functor $\Ho F : \Ho \mathcal{M}_\mathrm{cf} \to \Ho \cat{\Qcat}$ is a categorical equivalence.
\end{itemize}
\end{cor}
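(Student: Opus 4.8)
The plan is to assemble this corollary from \autoref{lem:homotopical.equivalences.from.Quillen.equivalences} and the preceding proposition. First I would observe that, in the Joyal model structure on $\cat{\SSet}$, every object is cofibrant (the cofibrations being the monomorphisms) and the fibrant objects are precisely the quasicategories; hence the full subcategory of cofibrant--fibrant objects is exactly $\cat{\Qcat}$. Applying \autoref{lem:homotopical.equivalences.from.Quillen.equivalences} with $\mathcal{N}$ taken to be $\cat{\SSet}$ equipped with the Joyal model structure then yields a functor $F : \mathcal{M}_\mathrm{cf} \to \cat{\Qcat}$ that preserves weak equivalences and for which the induced functor $\Ho F : \Ho \mathcal{M}_\mathrm{cf} \to \Ho \cat{\Qcat}$ is a categorical equivalence. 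This already secures the last two bullet points, so it remains only to upgrade $F$ to a $2$-functor $\tilde{F} : \mathfrak{M} \to \bicat{\Qcat}$.

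To do this I would invoke the preceding proposition with $\mathcal{C} = \mathcal{M}_\mathrm{cf}$, with $\mathfrak{D} = \bicat{\Qcat}$, and with the functor $F$ just constructed; four hypotheses must be checked. (1) The weak equivalences of $\mathcal{M}_\mathrm{cf}$ are detected by the localisation $\mathcal{M}_\mathrm{cf} \to \Ho \mathcal{M}_\mathrm{cf}$: this is the Whitehead theorem for model categories, since a map between cofibrant--fibrant objects is a weak equivalence exactly when it becomes invertible in the homotopy category. (2) The localisation $\mathcal{M}_\mathrm{cf} \to \Ho \mathcal{M}_\mathrm{cf}$ is full, because its hom-sets are homotopy classes of morphisms and every such class is represented by an honest morphism of $\mathcal{M}_\mathrm{cf}$. (3) The underlying category of $\bicat{\Qcat}$ is $\cat{\Qcat}$ and $\tau_0 \argb{\bicat{\Qcat}}$ is canonically $\Ho \cat{\Qcat}$: this is the content of the remark following \autoref{lem:weak.categorical.equivalences}, which records that a parallel pair in $\cat{\Qcat}$ is identified in $\Ho \cat{\Qcat}$ precisely when it is isomorphic in $\bicat{\Qcat}$. (4) $F$ sends weak equivalences to equivalences in $\bicat{\Qcat}$: indeed $F$ sends weak equivalences in $\mathcal{M}_\mathrm{cf}$ to weak categorical equivalences between quasicategories, and by \autoref{lem:weak.categorical.equivalences} these are exactly the equivalences in $\bicat{\Qcat}$.

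With these hypotheses in hand, the preceding proposition produces a $2$-category $\mathfrak{M}$ together with a bicategorical equivalence $\tilde{F} : \mathfrak{M} \to \bicat{\Qcat}$ whose underlying ordinary functor is $F$; inspecting the construction shows $\tilde{F}$ to be strictly a $2$-functor, since it agrees with $F$ on objects and $1$-cells and is the identity on the transported $2$-cells, which gives the first bullet point, while the remaining two bullet points are exactly the properties of $F$ obtained in the first step. I expect the only genuine work to lie in verifying hypotheses (1) and (2) — the properly model-categorical input — with the remaining steps being bookkeeping assembled from the cited results; in particular, the passage from ``preserves weak equivalences'' to ``sends weak equivalences to equivalences in $\bicat{\Qcat}$'' rests entirely on \autoref{lem:weak.categorical.equivalences}.
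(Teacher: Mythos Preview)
Your proposal is correct and follows essentially the same route as the paper: apply \autoref{lem:homotopical.equivalences.from.Quillen.equivalences} to produce $F$, then feed it into the preceding proposition. The paper's proof is terser, mentioning explicitly only the fullness of $\mathcal{M}_\mathrm{cf} \to \Ho \mathcal{M}_\mathrm{cf}$ and leaving the remaining hypotheses implicit, whereas you have carefully spelled out all four checks (including the identification $\tau_0 \argb{\bicat{\Qcat}} \cong \Ho \cat{\Qcat}$ needed to match the two meanings of $\Ho \mathcal{D}$), which is entirely appropriate.
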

\begin{proof}
Using \autoref{lem:homotopical.equivalences.from.Quillen.equivalences} and the well-known fact that the localisation functor $\mathcal{M}_\mathrm{cf} \to \Ho \mathcal{M}_\mathrm{cf}$ is full, we see that there is a functor $F : \mathcal{M} \to \cat{\Qcat}$ satisfying the hypotheses of the earlier proposition.
\end{proof}

\begin{remark}
As explained in \autoref{rem:intrinsicality.of.the.homotopy.bicategory}, the $\parens{\Ho \cat{\Cat}}$-enriched category underlying $\mathfrak{M}$ can be described intrinsically in terms of $\Ho \mathcal{M}$, at least once we know how $\Ho \cat{\Cat}$ is embedded in $\Ho \mathcal{M}$. We are able to obtain a straightforward description of the 2-category $\mathfrak{M}$ itself in this case because \autoref{lem:homotopical.equivalences.from.Quillen.equivalences} gives us the extra data of a functor $\mathcal{M}_\mathrm{cf} \to \cat{\Qcat}$ that descends to a categorical equivalence $\Ho \mathcal{M}_\mathrm{cf} \to \Ho \cat{\Qcat}$. 

It seems unlikely that $\mathfrak{M}$ can be described purely in terms of the $\parens{\Ho \cat{\Cat}}$-enrichment of $\Ho \mathcal{M}_\mathrm{cf}$: though the hom-categories are determined uniquely up to unique isomorphism, it appears to be impossible to unambiguously define the horizontal composition without reference to a known 2-category.
\end{remark}

\section{Segal spaces}

We now turn to Rezk's theory of $\tuple{\infty, 1}$-categories. As usual, $\cat{\Simplex}$ denotes the full subcategory of $\cat{\Cat}$ spanned by the categories $\bracket{0}, \bracket{1}, \bracket{2}, \ldots$, where $\bracket{n}$ is the category freely generated by the following graph:
\[
0 \to 1 \to \cdots \to \parens{n-1} \to n
\]
For consistency, we will follow the conventions of \citep{Joyal-Tierney:2007} regarding bisimplicial sets:

\begin{dfn}
A \strong{bisimplicial set} is a functor $\op{\cat{\Simplex}} \times \op{\cat{\Simplex}} \to \cat{\Set}$. For brevity, if $X$ is a bisimplicial set, we write $X_{n,m}$ for the set $X \argp{\bracket{n}, \bracket{m}}$. The \strong{$n$-th column} of a bisimplicial set $X$ is the simplicial set $X_{n, \bullet}$, and the \strong{$m$-th row} of $X$ is the simplicial set $X_{\bullet, m}$.
\end{dfn}

\makenumpar
Given simplicial sets $X$ and $Y$, we define a bisimplicial set $X \boxtimes Y$ by the following formula:
\[
\parens{X \boxtimes Y}_{n,m} = X_n \times Y_m
\]
It is not hard to check that the bisimplicial set that represents $\tuple{\bracket{n}, \bracket{m}}$ is (isomorphic to) $\Delta^n \boxtimes \Delta^m$. Note also that $X \boxtimes Y$ is naturally isomorphic to $\parens{X \boxtimes \Delta^0} \times \parens{\Delta^0 \boxtimes Y}$.

\begin{prop}
\label{prop:bisimplicial.set.properties}
\needspace{2.5\baselineskip}
Let $\cat{\SSSet}$ be the category of bisimplicial sets.
\begin{enumerate}[(i)]
\item $\cat{\SSSet}$ is a cartesian closed category, with exponential objects defined by the formula below:
\[
\xHom{X}{Y}_{n,m} = \Hom[\SSSet]{\parens{\Delta^n \boxtimes \Delta^m} \times X}{Y}
\]

\item $\cat{\SSSet}$ admits (at least) two $\cat{\SSet}$-enrichments: the \strong{vertical enrichment}, where the space of morphisms $X \to Y$ is given by $\xHom{X}{Y}_{0, \bullet}$, and the \strong{horizontal enrichment}, where the space of morphisms $X \to Y$ is given by $\xHom{X}{Y}_{\bullet, 0}$.
%
%

\item For each simplicial set $Y$, the functor $\cat{\SSet} \to \cat{\SSSet}$ defined by $X \mapsto X \boxtimes Y$ has a right adjoint, namely the functor that sends a bisimplicial set $Z$ to the simplicial set $\upM_\uph \argp{Y, Z}$ defined by the formula below:
\[
\parens{\upM_\uph \argp{Y, Z}}_n = \Hom[\SSet]{Y}{Z_{n, \bullet}}
\]
Symmetrically, for each simplicial set $X$, the functor $\cat{\SSet} \to \cat{\SSSet}$ defined by $Y \mapsto X \boxtimes Y$ has a right adjoint, namely the functor that sends a bisimplicial set $Z$ to the simplicial set $\upM_\upv \argp{X, Z}$ defined by the formula below:
\[
\parens{\upM_\upv \argp{X, Z}}_m = \Hom[\SSet]{X}{Z_{\bullet, m}} 
\]

\item For each simplicial set $Y$, the functor $\upM_\uph \argp{Y, \blank} : \cat{\SSSet} \to \cat{\SSet}$ can be represented by $\upK_\upv Y$ with respect to the horizontal enrichment; symmetrically, for each simplicial set $X$, the functor $\upM_\upv \argp{X, \blank} : \cat{\SSSet} \to \cat{\SSet}$ can be represented by $\upK_\uph X$ with respect to the vertical enrichment.

\item The tensor product of a simplicial set $X$ and a bisimplicial set $Z$ with respect to the vertical enrichment of $\cat{\SSSet}$ is given by $\upK_\upv X \times Z$; symmetrically, the tensor product of a simplicial set $X$ and a bisimplicial set $Z$ with respect to the horizontal enrichment of $\cat{\SSet}$ is given by $\upK_\uph X \times Z$.
\end{enumerate}
\end{prop}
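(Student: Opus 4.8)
The plan is to get (i) for free from the theory of presheaf categories and then to derive (ii)--(v) from the two product-preserving restriction functors $\parens{\blank}_{0, \bullet}, \parens{\blank}_{\bullet, 0} : \cat{\SSSet} \to \cat{\SSet}$ extracting the $0$-th column and the $0$-th row, together with their left adjoints $\upK_\upv Y = \Delta^0 \boxtimes Y$ and $\upK_\uph X = X \boxtimes \Delta^0$. Claim (i) is immediate: $\cat{\SSSet}$ is the category of presheaves on $\cat{\Simplex} \times \cat{\Simplex}$ and so is cartesian closed, and the displayed formula is the standard description of the exponential $\xHom{X}{Y}_{n,m} = \Hom[\SSSet]{R \times X}{Y}$ where $R$ is the representable at $\tuple{\bracket{n}, \bracket{m}}$, once we recall from the paragraph on $\boxtimes$ that this representable is $\Delta^n \boxtimes \Delta^m$.

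For (ii), I would note that $\parens{\blank}_{0, \bullet}$ and $\parens{\blank}_{\bullet, 0}$ preserve finite products, since products of bisimplicial sets are computed pointwise. Hence, by the transport-of-enrichment construction recalled just before \autoref{lem:transported.cartesian.closed.structure}, each induces a $\cat{\SSet}$-enriched category with the same objects as $\cat{\SSSet}$ and hom-objects $\xHom{X}{Y}_{0, \bullet}$, respectively $\xHom{X}{Y}_{\bullet, 0}$. Putting $n = m = 0$ in the exponential formula and using $\Delta^0 \boxtimes \Delta^0 \cong 1$ yields $\xHom{X}{Y}_{0,0} \cong \Hom[\SSSet]{X}{Y}$, so the criterion of \autoref{lem:enrichment.via.transport} is satisfied and the underlying ordinary category of each enrichment is $\cat{\SSSet}$ itself.

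For (iii), the functor $\parens{\blank} \boxtimes Y$ is cocontinuous, being built pointwise out of the cocontinuous functor $\parens{\blank} \times Y_m$ on sets, so it has a right adjoint; I would identify it directly by currying the defining maps $X_n \times Y_m \to Z_{n,m}$ of a morphism $X \boxtimes Y \to Z$ over the variable $Y_m$, reading off the naturality in $m$ for fixed $n$ as a simplicial map $Y \to Z_{n, \bullet}$ and the naturality in $n$ as a simplicial map $X \to \upM_\uph\argp{Y, Z}$. This gives the natural bijection $\Hom[\SSSet]{X \boxtimes Y}{Z} \cong \Hom[\SSet]{X}{\upM_\uph\argp{Y, Z}}$, and the symmetric statement is dual.

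Parts (iv) and (v) are then formal. For (iv), using $\upK_\upv Y = \Delta^0 \boxtimes Y$, the decomposition $\parens{\Delta^n \boxtimes \Delta^0} \times \parens{\Delta^0 \boxtimes Y} \cong \Delta^n \boxtimes Y$, and (iii), one computes
\[
\xHom{\upK_\upv Y}{Z}_{n, 0} \cong \Hom[\SSSet]{\Delta^n \boxtimes Y}{Z} \cong \parens{\upM_\uph\argp{Y, Z}}_n
\]
naturally in $n$, which says precisely that $\upK_\upv Y$ represents $\upM_\uph\argp{Y, \blank}$ in the horizontal enrichment; the other half is symmetric. For (v), I would first specialise (iii) to the adjunction $\upK_\upv \dashv \parens{\blank}_{0, \bullet}$ (take the fixed factor to be $\Delta^0$). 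The enriched tensor, which I write $X \otimes_\upv Z$, is characterised by $\xHom{X \otimes_\upv Z}{W}_{0, \bullet} \cong \xHom{X}{\xHom{Z}{W}_{0, \bullet}}$ naturally in $W$, the outer bracket on the right being the cartesian closed structure of $\cat{\SSet}$; I would verify that $\upK_\upv X \times Z$ has this property by combining the cartesian closure of $\cat{\SSSet}$ (to rewrite $\xHom{\upK_\upv X \times Z}{W} \cong \xHom{\upK_\upv X}{\xHom{Z}{W}}$), the identity $\parens{\Delta^0 \boxtimes A} \times \parens{\Delta^0 \boxtimes B} \cong \Delta^0 \boxtimes \parens{A \times B}$, the adjunction $\upK_\upv \dashv \parens{\blank}_{0, \bullet}$, and the cartesian closure of $\cat{\SSet}$. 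The genuinely load-bearing inputs are few --- the pointwise computation of products, the currying bijection of (iii), and the two compatibility identities relating $\boxtimes$ to products --- so the main obstacle is not any single hard step but the bookkeeping: one must keep the horizontal and vertical variables, the two enrichments, and their respective tensors rigidly apart, and in (v) unwind the universal property of the enriched tensor through the chain of adjunctions rather than attempting a brute-force computation of simplices.
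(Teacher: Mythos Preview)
Your proposal is correct and carries out exactly the kind of routine verification the paper has in mind: the paper records no argument beyond ``Straightforward, but omitted'' and marks the result as an exercise, so there is no alternative approach to compare against. Your identifications $\upK_\upv Y = \Delta^0 \boxtimes Y$ and $\upK_\uph X = X \boxtimes \Delta^0$ (which the paper uses without ever writing down explicitly) are the intended ones, and the chain of adjunctions you run in (iv) and (v) is the natural unwinding.
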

\begin{proof} \exerproof
Straightforward, but omitted.
\end{proof}

\begin{thm}[Reedy]
\needspace{3.0\baselineskip}
The following data define a cartesian model structure on $\cat{\SSSet}$:
\begin{itemize}
\item The weak equivalences are \strong{vertical weak homotopy equivalences}, \ie the morphisms that induce weak homotopy equivalences between the respective columns.

\item The cofibrations are the monomorphisms.
\end{itemize}
We refer to this as the \strong{vertical Reedy model structure} on $\cat{\SSSet}$, and the fibrations are called \strong{vertical Reedy fibrations}. It is a simplicial model structure with respect to the vertical enrichment of $\cat{\SSSet}$.
\end{thm}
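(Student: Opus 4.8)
The plan is to realise the vertical Reedy model structure as an instance of the general Reedy model structure, and then to verify the two genuinely $\cat{\Simplex}$-specific assertions — that the cofibrations are exactly the monomorphisms, and that the structure is cartesian and vertically simplicial — by reducing everything to column-wise statements about the Kan--Quillen model structure on $\cat{\SSet}$.

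First I would regard a bisimplicial set as a simplicial object in simplicial sets, currying in the first variable, so as to identify $\cat{\SSSet}$ with $\Func{\op{\cat{\Simplex}}}{\cat{\SSet}}$, the value at $\bracket{n}$ being the $n$-th column $X_{n, \bullet}$. Since $\op{\cat{\Simplex}}$ is a Reedy category and the Kan--Quillen model structure on $\cat{\SSet}$ is cofibrantly generated, the general theory of Reedy model categories produces a model structure on $\Func{\op{\cat{\Simplex}}}{\cat{\SSet}}$ whose weak equivalences are the objectwise weak homotopy equivalences — precisely the vertical weak homotopy equivalences — and whose cofibrations and fibrations are detected by the relative latching and matching maps. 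This disposes of existence and of the stated class of weak equivalences for free, and it provides the (as yet uncharacterised) vertical Reedy fibrations.

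The crux is the identification of the cofibrations with the monomorphisms. One direction is general Reedy theory: a Reedy cofibration is objectwise a cofibration, hence here a column-wise monomorphism, and therefore a monomorphism of bisimplicial sets. For the converse — that every monomorphism is a Reedy cofibration, i.e.\ has monic relative latching maps — I would invoke the Eilenberg--Zilber lemma in the first variable: because $\cat{\Simplex}$ is an elegant Reedy category, each column decomposes canonically along its degeneracies, so the latching inclusion $L_n X \embedinto X_n$ picks out the degenerate simplices and the relative latching map of a monomorphism $f : X \to Y$ is the inclusion of the newly-appearing nondegenerate simplices, which is again monic. This is the one point at which the combinatorics of $\cat{\Simplex}$ genuinely enters, and it is where I expect the main difficulty to lie.

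Finally, the cartesian and vertically simplicial axioms reduce to column-wise statements. Because both the cartesian product of bisimplicial sets and pushouts are computed column-wise, the $n$-th column of the pushout--product $i \mathbin{\hat{\times}} j$ is $i_n \mathbin{\hat{\times}} j_n$ in $\cat{\SSet}$; as the Kan--Quillen model structure is cartesian, it follows at once that the pushout--product of two monomorphisms is a monomorphism, and is a vertical weak homotopy equivalence as soon as one factor is — which is exactly the cartesian axiom in its adjoint pushout--product form — while the cofibrancy of $1$ is the triviality of $\emptyset \embedinto 1$. For the vertical enrichment I would use \autoref{prop:bisimplicial.set.properties}: the vertical tensor of a simplicial set $C$ with a bisimplicial set $Z$ is $\upK_\upv C \times Z$, and since $\upK_\upv C \cong \Delta^0 \boxtimes C$ has $n$-th column $C$, this tensor has $n$-th column $C \times Z_{n, \bullet}$; the simplicial pushout--product axiom therefore again reduces, column by column, to the fact that $\cat{\SSet}$ is a (cartesian, hence) simplicial model category over itself.
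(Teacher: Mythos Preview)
Your outline is correct and is essentially the standard argument for this result. Note, however, that the paper does not actually prove this theorem: it simply cites Theorem~2.6 of \citep{Joyal-Tierney:2007} and marks the proof as deferred (the open \openqedsymbol\ symbol). So there is no ``paper's own proof'' to compare against beyond the reference.

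That said, your sketch is exactly what one finds upon unwinding that reference. The one minor inaccuracy is that cofibrant generation of the Kan--Quillen model structure is not needed for the Reedy model structure to exist --- Reedy model structures exist over any base model category --- though it does help if one later wants cofibrant generation of the Reedy structure itself. The identification of Reedy cofibrations with monomorphisms via the elegance of $\cat{\Simplex}$ (equivalently, the Eilenberg--Zilber lemma) is indeed the substantive step, and your column-wise reduction of both the cartesian pushout--product axiom and the vertical SM7 axiom is the right way to finish: since products, pushouts, and the functor $\upK_\upv$ are all computed column-wise, the pushout--product in $\cat{\SSSet}$ has $n$-th column equal to the pushout--product of the $n$-th columns in $\cat{\SSet}$, and one appeals to the cartesianness of the Kan--Quillen model structure.
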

\begin{proof} \openproof
See Theorem 2.6 in \citep{Joyal-Tierney:2007}. 
\end{proof}

\begin{lem}
\label{lem:vertically.Reedy-fibrant.bisimplicial.sets}
Let $X$ be a bisimplicial set. The following are equivalent:
\begin{enumerate}[(i)]
\item $X$ is fibrant with respect to the vertical Reedy model structure.

\item For all $n \ge 0$, the morphism $\upM_\upv \argp{\Delta^n, X} \to \upM_\upv \argp{\partial \Delta^n, X}$ induced by the boundary inclusion $\partial \Delta^n \embedinto \Delta^n$ is a Kan fibration.

\item For all monomorphisms $i : Y \to Z$ in $\cat{\SSet}$, the morphism
\[
\upM_\upv \argp{i, X} : \upM_\upv \argp{Z, X} \to \upM_\upv \argp{Y, X}
\]
induced by $i$ is a Kan fibration.
\end{enumerate}
\end{lem}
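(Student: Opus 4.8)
The plan is to read all three conditions through the Reedy model structure on simplicial objects in $\cat{\SSet}$. First I would regard the bisimplicial set $X$ as the simplicial object $\bracket{n} \mapsto X_{n, \bullet}$ given by its columns; by construction the vertical Reedy model structure is precisely the Reedy model structure on this diagram category, with $\cat{\SSet}$ carrying the Kan--Quillen model structure. The crucial preliminary observation is that the $n$-th matching object of this simplicial object is canonically $\upM_\upv \argp{\partial \Delta^n, X}$. Indeed, evaluation at $\bracket{m}$ preserves limits, so the $m$-simplices of the matching object are the level-$n$ matching object of the simplicial set $X_{\bullet, m}$, namely $\Hom[\SSet]{\partial \Delta^n}{X_{\bullet, m}} = \parens{\upM_\upv \argp{\partial \Delta^n, X}}_m$; likewise the $n$-th object is $X_{n, \bullet} = \upM_\upv \argp{\Delta^n, X}$, and the canonical matching map is exactly $\upM_\upv \argp{\partial \Delta^n \embedinto \Delta^n, X}$.

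Granting this identification, the equivalence of (i) and (ii) is immediate from the definition of a Reedy fibration. The terminal bisimplicial set has terminal matching objects, so the relative matching map of $X \to 1$ at level $n$ reduces to the matching map $\upM_\upv \argp{\Delta^n, X} \to \upM_\upv \argp{\partial \Delta^n, X}$; thus $X$ is vertically Reedy fibrant precisely when all of these are Kan fibrations, which is (ii). The implication (iii) $\Rightarrow$ (ii) is trivial, since each boundary inclusion $\partial \Delta^n \embedinto \Delta^n$ is a monomorphism.

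The real content is (ii) $\Rightarrow$ (iii), which I would establish by a skeletal cell-attachment induction. The structural input is that the contravariant functor $\upM_\upv \argp{\blank, X}$ sends colimits in $\cat{\SSet}$ to limits, since at each level it is the hom-functor $\Hom[\SSet]{\blank}{X_{\bullet, m}}$. Every monomorphism $i : Y \to W$ presents $W$, via its skeletal filtration, as a countable composite of pushouts of coproducts of boundary inclusions. I would then trace this presentation through $\upM_\upv \argp{\blank, X}$: a coproduct of boundary inclusions is carried to a product of the maps appearing in (ii), hence to a Kan fibration; each attaching pushout is carried to a pullback square, so the map at that stage is a pullback of a Kan fibration and hence a Kan fibration; and the colimit presenting $W$ is carried to the inverse limit of the resulting tower, exhibiting $\upM_\upv \argp{i, X}$ as a limit of a tower of Kan fibrations. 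Appealing to the stability of Kan fibrations under products, pullbacks, and inverse limits of such towers then yields that $\upM_\upv \argp{i, X}$ is a Kan fibration for every monomorphism $i$.

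I expect the one genuinely non-formal point to be the closure of Kan fibrations under the inverse limit of the transfinite tower produced by the skeletal filtration; this is the dual of the stability of cofibrations under transfinite composition, and I would handle it by solving a lifting problem against a horn inclusion stage by stage, assembling the chosen lifts compatibly at limit ordinals---or simply cite this standard property of the right class of a weak factorisation system. Everything else, namely that evaluation preserves the relevant limits, that $\upM_\upv \argp{\blank, X}$ is continuous in its first variable, and that the skeletal filtration has the asserted form, is routine.
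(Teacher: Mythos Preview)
Your argument is correct and is in fact the standard one: identify the matching map at level $n$ with $\upM_\upv \argp{\partial \Delta^n \embedinto \Delta^n, X}$ to get (i)~$\iff$~(ii), and then run a skeletal induction, using that $\upM_\upv \argp{\blank, X}$ converts the cell-attachment presentation of a monomorphism into a tower of pullbacks of products of the matching maps, to obtain (ii)~$\Rightarrow$~(iii). The closure of Kan fibrations under countable towers that you flag as the one non-formal step is indeed handled exactly as you describe, by lifting against a horn inclusion stage by stage; equivalently it is the general fact that the right class of a weak factorisation system is closed under inverse limits of towers.

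The paper, however, does not prove this lemma at all: it simply refers the reader to Proposition~2.3 of Joyal--Tierney. So there is no ``paper's approach'' to compare against beyond noting that your write-up unpacks precisely the argument one would expect to find behind that citation. If anything, you might tighten the passage from (ii) to (iii) by phrasing it as an adjunction/lifting-property statement rather than an explicit induction: a square $\Lambda^m_k \to \upM_\upv \argp{W, X}$ over $\Delta^m \to \upM_\upv \argp{Y, X}$ transposes to a lifting problem for $X \to 1$ against the pushout-product of $Y \embedinto W$ with $\Lambda^m_k \embedinto \Delta^m$, and that pushout-product is again built from boundary inclusions box horn inclusions, which (i) handles directly. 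This avoids mentioning the tower altogether, but your version is equally valid.
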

\begin{proof} \openproof
See Proposition 2.3 in \citep{Joyal-Tierney:2007}
\end{proof}

\begin{dfn}
Let $n \ge 1$. A \strong{principal edge} of $\Delta^n$ is a 1-simplex in $\Delta^n$ that corresponds to a functor $\bracket{1} \to \bracket{n}$ sending $0$ to $i$ and $1$ to $i + 1$ (where $0 \le i < n$). The \strong{spine} of $\Delta^n$ is the smallest simplicial subset $G^n \subseteq \Delta^n$ containing all the principal edges in $\Delta^n$.
\end{dfn} 

\begin{dfn}
\needspace{3.0\baselineskip}
A bisimplicial set $X$ satisfies the \strong{Segal condition} (\resp \strong{strict Segal condition}) if it has the following property:
\begin{itemize}
\item For $n \ge 1$, the morphism $\upM_\upv \argp{\Delta^n, X} \to \upM_\upv \argp{G^n, X}$ induced by the spine inclusion $G^n \embedinto \Delta^n$ is a weak equivalence (\resp isomorphism) of simplicial sets.
\end{itemize}
A \strong{Segal space} is a bisimplicial set that is fibrant with respect to the vertical Reedy model structure and satisfies the Segal condition.
\end{dfn}

\begin{remark}
\label{rem:Segal.operators.are.trivial.Kan.fibrations}
If $X$ is a Segal space, then \autoref{lem:vertically.Reedy-fibrant.bisimplicial.sets} implies the morphism $\upM_\upv \argp{\Delta^n, X} \to \upM_\upv \argp{G^n, X}$ is a trivial Kan fibration; in particular, it is a split epimorphism.
\end{remark}
%

\begin{dfn}
The \strong{classifying diagram} of a small category $\mathcal{C}$ is the bisimplicial set $\Nv{\mathcal{C}}$ defined below,
\[
\Nv{\mathcal{C}}_{n,m} = \Hom[\Cat]{\bracket{n} \times \mbfI \bracket{m}}{\mathcal{C}}
\]
where $\mbfI \bracket{m}$ is the groupoid obtained by freely inverting all morphisms in  $\bracket{m}$.
\end{dfn}

\begin{thm}[Rezk]
\label{thm:classifying.diagram.functor.homotopical.properties}
\needspace{3.0\baselineskip}
Let $\Nv : \cat{\Cat} \to \cat{\SSSet}$ be the functor that sends a small category to its classifying diagram.
\begin{enumerate}[(i)]
\item The functor $\Nv : \cat{\Cat} \to \cat{\SSSet}$ is fully faithful, cartesian closed, and for any functor $F : \mathcal{C} \to \mathcal{D}$, $\Nv{F} : \Nv{\mathcal{C}} \to \Nv{\mathcal{D}}$ is a vertical weak homotopy equivalence if and only if $F : \mathcal{C} \to \mathcal{D}$ is a categorical equivalence.

\item For any small category $\mathcal{C}$, the classifying diagram $\Nv{\mathcal{C}}$ is a Segal space.

\item $\Nv$ has a left adjoint, $\tau_1 : \cat{\SSSet} \to \cat{\Cat}$, which is the unique (up to unique isomorphism) colimit-preserving functor that sends $\Delta^n \boxtimes \Delta^m$ to the category $\bracket{n} \times \mbfI \bracket{m}$. 
\end{enumerate}
\end{thm}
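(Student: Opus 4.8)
The plan is to establish the three claims in the order (iii), then (i), then (ii), since the left adjoint $\tau_1$ and a single ``matching object'' computation drive everything else. For (iii), I observe that $\Nv$ is the nerve $\Hom[\Cat]{j\argp{\blank, \blank}}{\blank}$ associated to the functor $j : \cat{\Simplex} \times \cat{\Simplex} \to \cat{\Cat}$ sending $\tuple{\bracket{n}, \bracket{m}}$ to $\bracket{n} \times \mbfI \bracket{m}$. Its left adjoint is the left Kan extension of $j$ along the Yoneda embedding $\cat{\Simplex} \times \cat{\Simplex} \embedinto \cat{\SSSet}$; this is cocontinuous by construction, satisfies $\tau_1 \parens{\Delta^n \boxtimes \Delta^m} \cong \bracket{n} \times \mbfI \bracket{m}$, and is the unique cocontinuous functor with these values because every bisimplicial set is a colimit of representables. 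I would also record here the decomposition $\tau_1 \parens{X \boxtimes Y} \cong c X \times \mbfI c Y$, where $c : \cat{\SSet} \to \cat{\Cat}$ denotes the left adjoint of $\nv$ (the functor called $\tau_1$ in the previous section, which preserves finite products): both sides are cocontinuous in each of $X$ and $Y$ separately and agree on pairs of representables, hence are naturally isomorphic.

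For (i), full faithfulness is equivalent to the density of the family $\set{\bracket{n} \times \mbfI \bracket{m}}$ in $\cat{\Cat}$; since this family contains $\set{\bracket{n}} = \set{\bracket{n} \times \mbfI \bracket{0}}$, which is dense (this is precisely the full faithfulness of $\nv$), and any family containing a dense family is dense, $\Nv$ is fully faithful. That $\Nv$ preserves finite products is immediate from its defining formula, so for cartesian closedness I would not argue the exponential comparison directly but instead invoke \autoref{prop:reflective.exponential.ideals} with $\mathcal{C} = \cat{\SSSet}$, $G = \Nv$, and $F = \tau_1$: it remains only to check that $\tau_1$ preserves finite products, whereupon the cited equivalence supplies the isomorphisms $\Nv \xHom{\mathcal{A}}{\mathcal{B}} \to \xHom{\Nv{\mathcal{A}}}{\Nv{\mathcal{B}}}$ (read off the first comparison morphism at $C = \Nv{\mathcal{A}}$, using that the counit $\tau_1 \Nv \hoto \id$ is invertible). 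That $\tau_1$ preserves finite products follows from the decomposition above together with $\parens{\Delta^n \boxtimes \Delta^m} \times \parens{\Delta^p \boxtimes \Delta^q} \cong \parens{\Delta^n \times \Delta^p} \boxtimes \parens{\Delta^m \times \Delta^q}$: by cocontinuity it reduces to representables, where it rests on the facts that $c$ preserves finite products and that $\mbfI \parens{\bracket{m} \times \bracket{q}} \cong \mbfI \bracket{m} \times \mbfI \bracket{q}$ (both are the indiscrete groupoid on $\parens{m+1}\parens{q+1}$ objects, the posets being contractible).

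The linchpin for the remaining assertions is the natural isomorphism
\[
\upM_\upv \argp{X, \Nv{\mathcal{C}}} \cong \nv{\iso \xHom{c X}{\mathcal{C}}}
\]
exhibiting the vertical matching object as the nerve of the core of a functor category; it follows from the defining formula for $\upM_\upv$, the adjunctions $c \dashv \nv$ and $\blank \times \mbfI \bracket{m} \dashv \xHom{\mbfI \bracket{m}}{\blank}$, and the identity $\Hom[\Cat]{\mbfI \bracket{m}}{\mathcal{G}} \cong \Hom[\Cat]{\bracket{m}}{\mathcal{G}}$ valid for groupoids $\mathcal{G}$. In particular $\upM_\upv \argp{X, \Nv{\mathcal{C}}}$ is always the nerve of a groupoid, hence a Kan complex. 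For (ii) the Segal condition is then immediate: $c$ sends the spine inclusion $G^n \embedinto \Delta^n$ to an isomorphism $\bracket{n} \to \bracket{n}$ (the fundamental category of the spine is freely generated by $0 \to \dots \to n$), so $\upM_\upv \argp{\Delta^n, \Nv{\mathcal{C}}} \to \upM_\upv \argp{G^n, \Nv{\mathcal{C}}}$ is an isomorphism, and even the strict Segal condition holds. Vertical Reedy fibrancy I check via criterion (ii) of \autoref{lem:vertically.Reedy-fibrant.bisimplicial.sets}: each matching map is the nerve of the restriction functor along $c\parens{\partial \Delta^n \embedinto \Delta^n}$, which is injective on objects, so this restriction is an isofibration between cores, and the nerve of an isofibration of groupoids is a Kan fibration. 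Finally, the same formula gives $\Nv{\mathcal{C}}_{n, \bullet} \cong \nv{\iso \xHom{\bracket{n}}{\mathcal{C}}}$; since a functor of groupoids induces a weak homotopy equivalence of nerves exactly when it is an equivalence, $\Nv{F}$ is a vertical weak homotopy equivalence if and only if $\iso \xHom{\bracket{n}}{F}$ is an equivalence of groupoids for every $n$. If $F$ is a categorical equivalence then so is each $\xHom{\bracket{n}}{F}$, giving one direction; conversely, the cases $n = 0$ and $n = 1$ force $F$ to be essentially surjective and bijective on isomorphisms (from $\iso F$) and then full and faithful (by lifting identity $2$-cells along the full, faithful functor $\iso \xHom{\bracket{1}}{F}$), so $F$ is a categorical equivalence.

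I expect the main obstacle to be the vertical, groupoidal direction throughout: pinning down the matching-object formula with the correct $\mbfI \bracket{m}$-versus-$\bracket{m}$ bookkeeping, and, relatedly, verifying that $\tau_1$ preserves finite products, since the product behaviour of the groupoid completion $\mbfI$ is the one place where the argument is not purely formal. The converse half of the weak-equivalence characterisation is short but delicate, as one must extract full faithfulness of $F$ from information that, a priori, concerns only isomorphisms; the device of lifting identity $2$-cells along $\iso \xHom{\bracket{1}}{F}$ is what makes it work.
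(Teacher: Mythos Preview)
Your proposal is correct, but it differs substantially from the paper's own proof, which for parts (i) and (ii) simply cites \citep{Rezk:2001} and for (iii) gives only the Yoneda-lemma observation you also make. So there is little to compare at the level of the theorem itself; the interesting comparison is with the paper's \emph{subsequent} proposition (\autoref{prop:classifying.diagram.functor.properties}), whose content you have partly absorbed and reorganised.

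Two structural differences are worth noting. First, you deduce that $\Nv$ is cartesian closed by first proving $\tau_1$ preserves finite products (via the decomposition $\tau_1 \parens{X \boxtimes Y} \cong c X \times \mbfI c Y$ and the identification $\mbfI \parens{\bracket{m} \times \bracket{q}} \cong \mbfI \bracket{m} \times \mbfI \bracket{q}$) and then invoking \autoref{prop:reflective.exponential.ideals}. The paper runs this in the opposite direction: in \autoref{prop:classifying.diagram.functor.properties} it establishes the explicit exponential formula $\Nv{\xHom{\bracket{n} \times \mbfI \bracket{m}}{\mathcal{C}}} \cong \xHom{\Delta^n \boxtimes \Delta^m}{\Nv{\mathcal{C}}}$ by an end computation, concludes that $\Nv$ exhibits $\cat{\Cat}$ as a reflective exponential ideal, and \emph{then} reads off product-preservation of $\tau_1$. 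Your route is shorter and avoids the end calculation, at the cost of the small groupoid-completion computation you flagged. Second, your matching-object formula $\upM_\upv \argp{X, \Nv{\mathcal{C}}} \cong \nv{\iso \xHom{c X}{\mathcal{C}}}$ is a genuine organising device: it yields the strict Segal condition, Reedy fibrancy (via the isofibration criterion), and the column description $\Nv{\mathcal{C}}_{n,\bullet} \cong \nv{\iso \xHom{\bracket{n}}{\mathcal{C}}}$ all at once. The paper records only the last of these (as \autoref{prop:classifying.diagram.functor.properties}(i)) and otherwise defers to Rezk.

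Your converse argument for the weak-equivalence characterisation is correct but terse; the cleanest way to extract full faithfulness of $F$ from the $n=0,1$ data is to observe that the squares
\[
\begin{tikzcd}
\iso \xHom{\bracket{1}}{\mathcal{C}} \rar \dar & \iso \xHom{\bracket{1}}{\mathcal{D}} \dar \\
\iso \mathcal{C} \times \iso \mathcal{C} \rar & \iso \mathcal{D} \times \iso \mathcal{D}
\end{tikzcd}
\]
have isofibrations for vertical maps and equivalences for horizontal maps, so the induced maps on fibres---which are the discrete groupoids $\Hom[\mathcal{C}]{c_0}{c_1}$ and $\Hom[\mathcal{D}]{F c_0}{F c_1}$---are equivalences, i.e.\ bijections. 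This is equivalent to your ``lifting identity $2$-cells'' argument but makes the role of essential surjectivity of $\iso \xHom{\bracket{1}}{F}$ (needed for fullness) more visible.
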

\begin{proof} \openproof
(i). See Theorem 3.6 in \citep{Rezk:2001}.

\bigskip\noindent
(ii). It is not hard to see that $\Nv{\mathcal{C}}$ satisfies the (strict!) Segal condition. For the fibrancy of $\Nv{\mathcal{C}}$ with respect to the vertical Reedy model structure, see Lemma 3.8 in \citep{Rezk:2001}. 

\bigskip\noindent
(iii). As usual, one may either apply the accessible adjoint functor theorem or use the theory of Kan extensions. To see that $\tau_1  \argp{\Delta^n \boxtimes \Delta^m}$ is (isomorphic to) $\bracket{n} \times \mbfI \bracket{m}$, simply observe that the Yoneda lemma gives us the following natural bijection:
\[
\Hom[\SSSet]{\Delta^n \boxtimes \Delta^m}{\nv{\mathcal{C}}} \cong \nv{\mathcal{C}}_{n,m} = \Hom[\Cat]{\bracket{n} \times \mbfI \bracket{m}}{\mathcal{C}}
\qedhere
\]
\end{proof}

\begin{prop}
\label{prop:classifying.diagram.functor.properties}
Let $\mathcal{C}$ be a small category.
\begin{enumerate}[(i)]
\item There is a natural isomorphism $\nv{\iso \xHom{\bracket{n}}{\mathcal{C}}} \cong \Nv{\mathcal{C}}_{n, \bullet}$, where $\iso : \cat{\Cat} \to \cat{\Grpd}$ is the \emph{right} adjoint of the inclusion $\cat{\Grpd} \embedinto \cat{\Cat}$.

\item There is a natural isomorphism $\Nv{\xHom{\bracket{n} \times \mbfI \bracket{m}}{\mathcal{C}}} \cong \xHom{\Delta^n \boxtimes \Delta^m}{\Nv{\mathcal{C}}}$.

\item The functor $\tau_1 : \cat{\SSSet} \to \cat{\Cat}$ preserves finite products, and for any bisimplicial set $X$, the natural morphism
\[
\Nv{\xHom{\tau_1 X}{\mathcal{C}}} \to \xHom{\Nv{\tau_1 X}}{\Nv{\mathcal{C}}} \to \xHom{X}{\Nv{\mathcal{C}}}
\]
is an isomorphism of bisimplicial sets.
\end{enumerate}
\end{prop}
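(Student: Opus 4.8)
The plan is to dispatch part~(i) by a direct adjunction chase, and then to reduce both part~(ii) and part~(iii) to a single combinatorial fact about the free-groupoid functor $\mbfI : \cat{\Cat} \to \cat{\Grpd}$ (the left adjoint to the inclusion $\cat{\Grpd} \embedinto \cat{\Cat}$, extending the notation $\mbfI \bracket{m}$), after which the cartesian closure of $\Nv$ and \autoref{prop:reflective.exponential.ideals} do the remaining work. For part~(i) I would start from the defining formula $\Nv{\mathcal{C}}_{n, m} = \Hom[\Cat]{\bracket{n} \times \mbfI \bracket{m}}{\mathcal{C}}$ and use the cartesian closure of $\cat{\Cat}$ to rewrite it as $\Hom[\Cat]{\mbfI \bracket{m}}{\xHom{\bracket{n}}{\mathcal{C}}}$. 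Since $\mbfI \bracket{m}$ is a groupoid, the core adjunction (whose right adjoint is $\iso$) turns this into $\Hom[\Grpd]{\mbfI \bracket{m}}{\iso \xHom{\bracket{n}}{\mathcal{C}}}$, and the free-groupoid adjunction $\mbfI \dashv (\cat{\Grpd} \embedinto \cat{\Cat})$ then turns it into $\Hom[\Cat]{\bracket{m}}{\iso \xHom{\bracket{n}}{\mathcal{C}}} = \nv{\iso \xHom{\bracket{n}}{\mathcal{C}}}_m$. Each bijection is natural in $\bracket{m}$, so they assemble into the desired isomorphism of simplicial sets, and naturality in $\mathcal{C}$ is automatic.

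The crux is the lemma: if a small category $\mathcal{A}$ has a terminal object, then $\mbfI \mathcal{A}$ is the chaotic (indiscrete) groupoid on $\ob \mathcal{A}$. I would prove this by hand: writing $!_x : x \to 1$ for the unique map to the terminal object $1$, every $f : x \to y$ satisfies $!_y \circ f = !_x$, so $f$ and $\parens{!_y}^{-1} \circ !_x$ acquire the same image in $\mbfI \mathcal{A}$; since the morphisms $\parens{!_y}^{-1} \circ !_x$ compose correctly and generate $\mbfI \mathcal{A}$, there is exactly one morphism between any two objects. As $\bracket{q}$, $\bracket{m}$, and $\bracket{q} \times \bracket{m}$ all have terminal objects, both $\mbfI \parens{\bracket{q} \times \bracket{m}}$ and $\mbfI \bracket{q} \times \mbfI \bracket{m}$ are chaotic groupoids on $\ob \bracket{q} \times \ob \bracket{m}$, and the canonical comparison functor between them is an identity-on-objects functor of chaotic groupoids, hence an isomorphism; thus $\mbfI$ preserves products of finite ordinals. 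I also need the auxiliary formula $\tau_1 \parens{\nv{A} \boxtimes \nv{B}} \cong A \times \mbfI B$ for small categories $A$, $B$, which I would obtain by writing $\nv{A}$ and $\nv{B}$ as colimits of representables, using that $\boxtimes$ and the cartesian product on $\cat{\Cat}$ preserve colimits in each variable while $\tau_1$ and $\mbfI$ preserve colimits, evaluating on representables via \autoref{thm:classifying.diagram.functor.homotopical.properties}~(iii), and noting that every small category is the colimit of the simplices of its nerve.

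For part~(iii) I would first show that $\tau_1 : \cat{\SSSet} \to \cat{\Cat}$ preserves finite products. Exactly as in the earlier argument for the simplicial nerve, since $\blank \times \blank$ preserves colimits in each variable in both $\cat{\SSSet}$ and $\cat{\Cat}$ and $\tau_1$ preserves colimits, the comparison $\tau_1 \parens{X \times Y} \to \tau_1 X \times \tau_1 Y$ is an isomorphism for all $X$, $Y$ once it is one on representables. Using the isomorphism $\parens{\Delta^n \boxtimes \Delta^m} \times \parens{\Delta^{n'} \boxtimes \Delta^{m'}} \cong \parens{\Delta^n \times \Delta^{n'}} \boxtimes \parens{\Delta^m \times \Delta^{m'}}$ together with the auxiliary formula, the left-hand side evaluates to $\parens{\bracket{n} \times \bracket{n'}} \times \mbfI \parens{\bracket{m} \times \bracket{m'}}$, which the lemma rewrites as $\parens{\bracket{n} \times \mbfI \bracket{m}} \times \parens{\bracket{n'} \times \mbfI \bracket{m'}}$, namely $\tau_1 \parens{\Delta^n \boxtimes \Delta^m} \times \tau_1 \parens{\Delta^{n'} \boxtimes \Delta^{m'}}$, as required.

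With product-preservation in hand, I would feed it into \autoref{prop:reflective.exponential.ideals}, applied to the fully faithful $\Nv : \cat{\Cat} \to \cat{\SSSet}$ (\autoref{thm:classifying.diagram.functor.homotopical.properties}~(i)) with left adjoint $\tau_1$, so that the ambient cartesian closed category is $\cat{\SSSet}$ and the reflective subcategory is $\cat{\Cat}$: condition~(ii) of that proposition now holds, so its condition~(iii) (taken with $D = \mathcal{C}$) yields that both canonical morphisms $\Nv{\xHom{\tau_1 X}{\mathcal{C}}} \to \xHom{\Nv{\tau_1 X}}{\Nv{\mathcal{C}}}$ and $\xHom{\Nv{\tau_1 X}}{\Nv{\mathcal{C}}} \to \xHom{X}{\Nv{\mathcal{C}}}$ are isomorphisms, hence so is their composite; this is precisely the isomorphism asserted in part~(iii). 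Part~(ii) is then the special case $X = \Delta^n \boxtimes \Delta^m$, since $\tau_1 \parens{\Delta^n \boxtimes \Delta^m} \cong \bracket{n} \times \mbfI \bracket{m}$. The one genuinely non-formal step, and the place I expect the main difficulty, is the lemma on $\mbfI$: a left adjoint need not preserve products, so one must exploit the terminal objects of the ordinals to force the localisations to be chaotic, and everything else is a manipulation of adjunctions and colimits.
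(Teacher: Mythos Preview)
Your proof of (i) is the same as the paper's. For (ii) and (iii), your argument is correct but proceeds in the reverse order from the paper. The paper establishes (ii) first by a direct end calculation showing
\[
\Hom[\SSSet]{\parens{\Delta^k \boxtimes \Delta^l} \times \parens{\Delta^n \boxtimes \Delta^m}}{\Nv{\mathcal{C}}} \cong \Hom[\Cat]{\bracket{k} \times \mbfI \bracket{l} \times \bracket{n} \times \mbfI \bracket{m}}{\mathcal{C}},
\]
then uses this to see that $\xHom{X}{\Nv{\mathcal{C}}}$ lies in the essential image of $\Nv$ for every $X$, so that $\Nv$ exhibits $\cat{\Cat}$ as a reflective exponential ideal; \autoref{prop:reflective.exponential.ideals} then yields both product preservation of $\tau_1$ and the isomorphism in (iii). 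You instead prove product preservation of $\tau_1$ directly, via the lemma that $\mbfI$ sends categories with terminal object to chaotic groupoids (hence preserves products of finite ordinals), feed that into the implication (ii) $\Rightarrow$ (iii) of \autoref{prop:reflective.exponential.ideals}, and recover (ii) as the special case $X = \Delta^n \boxtimes \Delta^m$. Your route replaces the end computation by a short combinatorial observation about the free-groupoid functor, which is arguably more transparent; the paper's route has the virtue of making the comparison isomorphism in (ii) completely explicit at the level of hom-sets, without needing the auxiliary colimit formula $\tau_1 \parens{\nv{A} \boxtimes \nv{B}} \cong A \times \mbfI B$.
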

\begin{proof}
(i). We have the following natural bijections:
\begin{align*}
\nv{\iso \xHom{\bracket{n}}{\mathcal{C}}}_m
& = \Hom[\Cat]{\bracket{m}}{\iso \xHom{\bracket{n}}{\mathcal{C}}} \\
& \cong \Hom[\Cat]{\mbfI \bracket{m}}{\xHom{\bracket{n}}{\mathcal{C}}} \\
& \cong \Hom[\Cat]{\bracket{n} \times \mbfI \bracket{m}}{\mathcal{C}} \\
& = \Nv{\mathcal{C}}_{n,m}
\end{align*}
Thus, we have the desired natural isomorphism of simplicial sets.

\bigskip\noindent
(ii). To prove the claim, it suffices to show that we have a natural bijection of the form below:
\[
\Hom[\SSSet]{\parens{\Delta^k \boxtimes \Delta^l} \times \parens{\Delta^n \boxtimes \Delta^m}}{\Nv{\mathcal{C}}} \cong \Hom[\Cat]{\bracket{k} \times \mbfI \bracket{l} \times \bracket{n} \times \mbfI \bracket{m}}{\mathcal{C}}
\]
We now use the calculus of ends to establish such a natural bijection:
\begin{align*}
\mathrlap{\Hom[\SSSet]{\parens{\Delta^k \boxtimes \Delta^l} \times \parens{\Delta^n \boxtimes \Delta^m}}{\Nv{\mathcal{C}}}}\hspace{6.0em} \\
& \cong \int_{\bracket{p} : \cat{\Simplex}} \int_{\bracket{q} : \cat{\Simplex}} \Hom[\Set]{\Delta^k_p \times \Delta^l_q \times \Delta^n_p \times \Delta^m_q}{\Nv{\mathcal{C}}_{p,q}} \\
& \cong \int_{\bracket{p} : \cat{\Simplex}} \Hom[\Set]{\Delta^k_p \times \Delta^n_p}{\int_{\bracket{q} : \cat{\Simplex}} \Hom[\Set]{\Delta^l_q \times \Delta^m_q}{\Nv{\mathcal{C}}_{p,q}}} \\
& \cong \int_{\bracket{p} : \cat{\Simplex}} \Hom[\Set]{\Delta^k_p \times \Delta^n_p}{\Hom[\SSet]{\Delta^l \times \Delta^m}{\Nv{\mathcal{C}}_{p,\bullet}}} \\
& \cong \int_{\bracket{p} : \cat{\Simplex}} \Hom[\Set]{\Delta^k_p \times \Delta^n_p}{\Hom[\SSet]{\Delta^l \times \Delta^m}{\nv{\iso \Func{\bracket{p}}{\mathcal{C}}}}} \\
& \cong \int_{\bracket{p} : \cat{\Simplex}} \Hom[\Set]{\Delta^k_p \times \Delta^n_p}{\Hom[\Cat]{\bracket{l} \times \bracket{m}}{\iso \Func{\bracket{p}}{\mathcal{C}}}} \\
& \cong \int_{\bracket{p} : \cat{\Simplex}} \Hom[\Set]{\Delta^k_p \times \Delta^n_p}{\Hom[\Cat]{\mbfI \bracket{l} \times \mbfI \bracket{m}}{\Func{\bracket{p}}{\mathcal{C}}}} \\
& \cong \int_{\bracket{p} : \cat{\Simplex}} \Hom[\Set]{\Delta^k_p \times \Delta^n_p}{\Hom[\Cat]{\bracket{p}}{\Func{\mbfI \bracket{l} \times \mbfI \bracket{m}}{\mathcal{C}}}} \\
& \cong \Hom[\SSet]{\Delta^k \times \Delta^n}{\nv{\Func{\mbfI \bracket{l} \times \mbfI \bracket{m}}{\mathcal{C}}}} \\
& \cong \Hom[\Cat]{\bracket{k} \times \bracket{n}}{\Func{\mbfI \bracket{l} \times \mbfI \bracket{m}}{\mathcal{C}}} \\
& \cong \Hom[\Cat]{\bracket{k} \times \mbfI \bracket{l} \times \bracket{n} \times \mbfI \bracket{m}}{\mathcal{C}}
\end{align*}

\bigskip\noindent
(iii). We first show that $\xHom{X}{\Nv{\mathcal{C}}}$ is isomorphic to the classifying diagram of \emph{some} category. Indeed, every bisimplicial set is canonically a colimit of a canonical small diagram of representable bisimplicial sets, and the functor $\xHom{\blank}{\Nv{\mathcal{C}}} : \op{\cat{\SSSet}} \to \cat{\SSSet}$ is a right adjoint, so we deduce that $\xHom{X}{\Nv{\mathcal{C}}}$ is the limit of a small diagram of bisimplicial sets of the form $\xHom{\Delta^n \boxtimes \Delta^m}{\Nv{\mathcal{C}}}$, which by claim (ii) is naturally isomorphic to $\Nv{\Func{\bracket{n} \times \mbfI \bracket{m}}{\mathcal{C}}}$. Since $\cat{\Cat}$ is a complete category and $\Nv : \cat{\Cat} \to \cat{\SSSet}$ is a right adjoint, it follows that $\xHom{X}{\Nv{\mathcal{C}}}$ is isomorphic to the classifying diagram of a small category.

Thus, $\Nv$ embeds $\cat{\Cat}$ as a reflective exponential ideal in $\cat{\SSSet}$, and we may deduce the main claims by applying \autoref{prop:reflective.exponential.ideals}.
\end{proof}

\makenumpar
The category $\tau_1 X$ admits a simple explicit description when $X$ is a Segal space. First, for each tuple $\tuple{x_0, \ldots, x_n}$ of vertices in $X_{0, \bullet}$, let $X \argp{x_0, \ldots, x_n}$ be the simplicial set defined by the pullback diagram in $\cat{\SSet}$ shown below,
\[
\begin{tikzcd}
X \argp{x_0, \ldots, x_n} \dar \rar &
X_{n, \bullet} \dar \\
\Delta^0 \rar[swap]{\prodtuple{x_0, \ldots, x_n}} &
X_{0, \bullet} \times \cdots \times X_{0, \bullet}
\end{tikzcd}
\]
where the morphism $X_{n, \bullet} \to X_{0, \bullet} \times \cdots \times X_{0, \bullet}$ is the one induced by the obvious inclusion of vertices $\Delta^0 \amalg \cdots \amalg \Delta^0 \to \Delta^n$. Note that there is a unique morphism $X \argp{x_0, \ldots, x_n} \to X \argp{x_0, x_1} \times \cdots \times X \argp{x_{n-1}, x_n}$ making the diagram below a pullback square in $\cat{\SSet}$,
\[
\begin{tikzcd}
X \argp{x_0, \ldots, x_n} \dar[hookrightarrow] \rar &
X \argp{x_0, x_1} \times \cdots \times X \argp{x_{n-1}, x_n} \dar[hookrightarrow] \\
\upM_\upv \argp{\Delta^n, X} \rar &
\upM_\upv \argp{G^n, X}
\end{tikzcd}
\]
and since the bottom arrow in the diagram is a trivial Kan fibration, so too is the top arrow. We should think of $X \argp{x_0, x_1}$ as the space of morphisms $x_0 \to x_1$; the term `space' is reasonable because $X \argp{x_0, x_1}$ is a Kan complex. Consider the following category $\Ho X$:
\begin{itemize}
\item The objects are the vertices of the 0th column $X_{0, \bullet}$.

\item The morphisms $x_0 \to x_1$ are the connected components of $X \argp{x_0, x_1}$.

\item The identity morphism $x \to x$ is the connected component of the vertex $s_{0, \bullet} \argp{x}$ in $X \argp{x, x}$, where $s_{0, \bullet}$ is the degeneracy operator $X_{0, \bullet} \to X_{1, \bullet}$.

\item Composition is induced by the face operator $d_{1, \bullet} : X_{2, \bullet} \to X_{1, \bullet}$. More precisely, $d_{1, \bullet}$ induces a morphism $X \argp{x_0, x_1, x_2} \to X \argp{x_0, x_2}$, and we have a weak homotopy equivalence $X \argp{x_0, x_1, x_2} \to X \argp{x_0, x_1} \times X \argp{x_1, x_2}$, so by applying the connected components functor $\pi_0 : \cat{\SSet} \to \cat{\Set}$, we obtain a well-defined map $\Hom[\Ho X]{x_1}{x_2} \times \Hom[\Ho X]{x_0}{x_1} \to \Hom[\Ho X]{x_0}{x_2}$.
\end{itemize} 

\begin{lem}
The above construction is indeed a category.
\end{lem}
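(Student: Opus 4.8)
The plan is to check the three category axioms by transporting the simplicial structure on the columns $X_{n, \bullet}$ across the connected-components functor $\pi_0 : \cat{\SSet} \to \cat{\Set}$, using only that $\pi_0$ preserves finite products and carries weak homotopy equivalences to bijections. Throughout write $d_i$ and $s_i$ for the face and degeneracy operators $X_{n, \bullet} \to X_{n-1, \bullet}$ and $X_{n, \bullet} \to X_{n+1, \bullet}$ in the first simplicial direction, so that $d_1$ is the operator $d_{1, \bullet}$ appearing above. Each of the Segal maps $X \argp{x_0, \ldots, x_n} \to X \argp{x_0, x_1} \times \cdots \times X \argp{x_{n-1}, x_n}$ is a trivial Kan fibration (the top arrows of the pullback squares constructed above), hence a weak homotopy equivalence, so applying $\pi_0$ yields a bijection
\[
s_n : \pi_0 X \argp{x_0, \ldots, x_n} \xrightarrow{\cong} \Hom[\Ho X]{x_0}{x_1} \times \cdots \times \Hom[\Ho X]{x_{n-1}}{x_n}.
\]
Composition is by definition $\pi_0 \argp{d_1} \circ s_2^{-1}$; being a composite of honest functions it is well-defined and choice-free, which settles the first axiom.

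For the unit axioms I would use the degeneracies $s_0, s_1 : X_{1, \bullet} \to X_{2, \bullet}$. If $\phi$ is a vertex of $X \argp{x_0, x_1}$ representing $f : x_0 \to x_1$, then inspecting vertex labels shows that $s_0 \phi$ lies in $X \argp{x_0, x_0, x_1}$ with principal edge $\{0,1\}$ degenerate and principal edge $\{1,2\}$ equal to $\phi$, so $s_2$ sends its class to $\argp{\id_{x_0}, f}$; symmetrically $s_2$ sends the class of $s_1 \phi \in X \argp{x_0, x_1, x_1}$ to $\argp{f, \id_{x_1}}$. The simplicial identities $d_1 s_0 = \id = d_1 s_1$ then give $f \circ \id_{x_0} = f$ and $\id_{x_1} \circ f = f$.

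The real content is associativity, which I would extract from the $3$-simplices. Given $f : x_0 \to x_1$, $g : x_1 \to x_2$, $h : x_2 \to x_3$, choose a class $\xi \in \pi_0 X \argp{x_0, x_1, x_2, x_3}$ with $s_3 \argp{\xi} = \argp{f, g, h}$, and consider the images $d_1 \xi$ and $d_2 \xi$ under the inner face operators. To evaluate $s_2$ on these I must compute the classes of the two non-principal edges $\{0,2\}$ and $\{1,3\}$ of $\xi$: applying $d_3$ and $d_0$ sends $\xi$ to classes in $\pi_0 X \argp{x_0, x_1, x_2}$ and $\pi_0 X \argp{x_1, x_2, x_3}$, on which $s_2$ reads off $\argp{f, g}$ and $\argp{g, h}$, and composing these via $d_1$ identifies the $\{0,2\}$-edge class as $g \circ f$ and the $\{1,3\}$-edge class as $h \circ g$. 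Hence $s_2 \argp{d_2 \xi} = \argp{f, h \circ g}$ and $s_2 \argp{d_1 \xi} = \argp{g \circ f, h}$, so that $d_1 d_2 \xi = \argp{h \circ g} \circ f$ while $d_1 d_1 \xi = h \circ \argp{g \circ f}$. Associativity is then exactly the simplicial identity $d_1 d_1 = d_1 d_2$ (the instance $i < j$ of $d_i d_j = d_{j-1} d_i$).

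I expect the only genuine obstacle to be the edge-bookkeeping of the last paragraph, namely verifying that the first-direction face operators are compatible with the Segal bijections $s_n$ — concretely, that $s_2$ applied to a face of $\xi$ returns the classes of the principal edges of that face, which are specific edges of $\xi$. Once this compatibility is in hand, every identification above is forced, and the three axioms follow mechanically from $\pi_0$-functoriality together with the simplicial identities; the fact that each $X \argp{x_0, x_1}$ is a Kan complex is not even needed beyond ensuring that $\pi_0$ behaves as genuine connected components.
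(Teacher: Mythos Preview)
Your argument is correct: the unit laws come from the degeneracy identities $d_1 s_0 = \id = d_1 s_1$, and associativity from $d_1 d_2 = d_1 d_1$, once one knows that the Segal maps become bijections after applying $\pi_0$. This is exactly the standard verification; the paper itself does not spell it out but simply cites Proposition~5.4 of Rezk's original paper, where the same proof appears. One small presentational point: you use the symbol $s_n$ both for the degeneracy operators $X_{1,\bullet} \to X_{2,\bullet}$ and for the $\pi_0$-level Segal bijections, which makes the unit-axiom paragraph momentarily hard to parse; renaming the latter (say $\sigma_n$) would remove the ambiguity. The ``edge-bookkeeping'' you flag as the only genuine obstacle is indeed routine: it amounts to the observation that the face maps $d_i : X_{n,\bullet} \to X_{n-1,\bullet}$ restrict to maps $X(x_0,\ldots,x_n) \to X(x_0,\ldots,\widehat{x_i},\ldots,x_n)$ compatible with the Segal projections, which follows directly from the simplicial identities among the vertex inclusions.
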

\begin{proof} \openproof
See Proposition 5.4 in \citep{Rezk:2001}.
\end{proof}

\begin{lem}
\label{lem:classifying.diagram.properties}
Let $\mathcal{C}$ be a small category and let $X = \Nv{\mathcal{C}}$.
\begin{enumerate}[(i)]
\item The columns of $X$ are Kan complexes.

\item The vertices of the 0th column $X_{0, \bullet}$ can be canonically identified with objects in $\mathcal{C}$.

\item The vertices of $X \argp{x_0, x_1}$ can be canonically identified with morphisms $x_0 \to x_1$ in $\mathcal{C}$, and $X \argp{x_0, x_1}$ is moreover discrete as a simplicial set.
\end{enumerate}
\end{lem}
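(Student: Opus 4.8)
The plan is to prove all three claims by directly unwinding the definitions of the classifying diagram $\Nv{\mathcal{C}}$ and the various simplicial sets $X\argp{x_0,\ldots,x_n}$, using the computation of the columns established in \autoref{prop:classifying.diagram.functor.properties}. The key ingredient is the natural isomorphism $\nv{\iso \xHom{\bracket{n}}{\mathcal{C}}} \cong \Nv{\mathcal{C}}_{n, \bullet}$ from part (i) of that proposition, which identifies the $n$-th column of $X$ with the nerve of a groupoid.

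\textbf{Claim (i).} First I would observe that for any small groupoid $\mathcal{G}$, the nerve $\nv{\mathcal{G}}$ is a Kan complex, since horn-filling in a nerve is governed by composability and the groupoid axioms supply the required inverses. Applying the isomorphism $\Nv{\mathcal{C}}_{n, \bullet} \cong \nv{\iso \xHom{\bracket{n}}{\mathcal{C}}}$, and noting that $\iso \xHom{\bracket{n}}{\mathcal{C}}$ is by construction a groupoid (it is the maximal subgroupoid of a functor category), the $n$-th column of $X$ is the nerve of a groupoid and hence a Kan complex.

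\textbf{Claim (ii).} This is immediate from part (ii) of \autoref{thm:classifying.diagram.functor.homotopical.properties} together with the defining formula: the vertices of $X_{0,\bullet}$ are the elements of $X_{0,0} = \Hom[\Cat]{\bracket{0} \times \mbfI\bracket{0}}{\mathcal{C}} = \Hom[\Cat]{\bracket{0}}{\mathcal{C}}$, which are exactly the objects of $\mathcal{C}$.

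\textbf{Claim (iii).} The plan is to compute $X\argp{x_0,x_1}$ explicitly from its defining pullback. I would identify $X_{1,\bullet} \cong \nv{\iso\xHom{\bracket{1}}{\mathcal{C}}}$, whose groupoid of objects is the groupoid of morphisms of $\mathcal{C}$ with invertible natural transformations between them; the map $X_{1,\bullet}\to X_{0,\bullet}\times X_{0,\bullet}$ is induced by restriction along the two endpoint inclusions $\bracket{0}\to\bracket{1}$, i.e.\ it records the source and target of a morphism. Pulling back along a chosen pair of objects $\tuple{x_0,x_1}$ therefore selects the full subgroupoid of $\iso\xHom{\bracket{1}}{\mathcal{C}}$ on those arrows $f:x_0\to x_1$ whose source is exactly $x_0$ and target exactly $x_1$. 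The crucial point is that an invertible natural transformation between two such fixed-endpoint morphisms must have identity components at both $0$ and $1$, forcing the two morphisms to be equal; hence this subgroupoid is discrete, its nerve is a discrete simplicial set, and its vertices are precisely the morphisms $x_0\to x_1$ in $\mathcal{C}$.

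\textbf{Main obstacle.} I expect the only subtle point to be the discreteness assertion in (iii): one must check carefully that the pullback is computed correctly at the level of simplicial sets (so that the pullback of Kan complexes presenting nerves of groupoids is again a nerve of a groupoid — here of an appropriate \emph{comma-type} subgroupoid) and that fixing both endpoints genuinely kills all higher simplices. Everything else reduces to the bookkeeping already done in \autoref{prop:classifying.diagram.functor.properties}.
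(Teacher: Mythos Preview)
Your argument is correct and matches the paper's approach: parts (i) and (ii) are handled identically, and for (iii) the paper likewise unwinds the pullback defining $X\argp{x_0,x_1}$ explicitly, phrasing its $m$-simplices as commutative ladder diagrams in $\mathcal{C}$ whose horizontal arrows are forced to be identities, hence all degenerate for $m\ge 1$. One caution on your wording in (iii): the pullback is the \emph{fibre} of $\iso\xHom{\bracket{1}}{\mathcal{C}} \to \iso\mathcal{C}\times\iso\mathcal{C}$ over $\tuple{x_0,x_1}$, not the full subgroupoid on arrows $x_0\to x_1$ --- the latter can have nontrivial morphisms whenever $x_0$ or $x_1$ has nontrivial automorphisms in $\mathcal{C}$; your very next clause (that the components of the natural isomorphism at $0$ and $1$ must be identities) is precisely the fibre condition rather than a consequence of fullness, so the conclusion stands once this phrasing is corrected.
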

\begin{proof}
(i). It was shown in \autoref{prop:classifying.diagram.functor.properties} that the columns of $\Nv{\mathcal{C}}$ are isomorphic to nerves of groupoids, and it is well-known that the nerve of any groupoid is a Kan complex.

\bigskip\noindent
(ii). By definition, $\Nv{\mathcal{C}}_{0,0} = \Hom[\Cat]{\bracket{0} \times \mbfI \bracket{0}}{\mathcal{C}}$, and there is a natural bijection between the latter and the set of objects in $\mathcal{C}$.

\bigskip\noindent
(iii). The $n$-simplices of $\Hom[X]{x_0}{x_1}$ correspond to commutative diagrams of the following form in $\mathcal{C}$,
\[
\begin{tikzcd}
x_0 \dar \rar{\id} &
x_0 \dar \rar{\id} &
\cdots \rar{\id} &
x_0 \dar \rar{\id} &
x_0 \dar \\
x_1 \rar[swap]{\id} &
x_1 \rar[swap]{\id} &
\cdots \rar[swap]{\id} &
x_1 \rar[swap]{\id} &
x_1
\end{tikzcd}
\]
where the number of columns of horizontal arrows is $n$. Thus, there is a canonical identification of vertices of $\Hom[X]{x_0}{x_1}$ with morphisms $x_0 \to x_1$ in $\mathcal{C}$, and it is clear that all the $n$-simplices of $\Hom[X]{x_0}{x_1}$ are degenerate for $n \ge 1$.
\end{proof}

\begin{lem}
\label{lem:connections.in.Segal.spaces}
Let $X$ be a Segal space. For each edge $p$ of $X_{0, \bullet}$, there exists an edge $\sigma$ of $X_{1, \bullet}$ satisfying the following equations:
\begin{align*}
d_{1, \bullet} \argp{\sigma} & = p &
d_{0, \bullet} \argp{\sigma} & = s_{\bullet, 0} \argp{d_{\bullet, 0} \argp{p}} &
d_{\bullet, 0} \argp{\sigma} & = s_{0, \bullet} \argp{d_{\bullet, 0} \argp{p}}
\end{align*}
Moreover, the connected component of $X_{1, \bullet}$ containing the vertex $d_{\bullet, 1} \argp{\sigma}$ depends only on $p$, and this defines a functor $j_X : \tau_1 \argp{X_{0, \bullet}} \to \Ho X$.
\end{lem}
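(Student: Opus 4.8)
The plan is to let $j_X$ be the identity on objects and to send a generating morphism---an edge $p : a \to b$ of $X_{0, \bullet}$---to $j_X \argp{p} = [d_{\bullet, 1} \sigma] \in \pi_0 X \argp{a, b}$ for any filler $\sigma$ as in the statement, and then to verify functoriality via the adjunction $\tau_1 \dashv \nv$. The starting point is that $\argp{d_{1, \bullet}, d_{0, \bullet}} : X_{1, \bullet} \to X_{0, \bullet} \times X_{0, \bullet}$ is a Kan fibration: this is \autoref{lem:vertically.Reedy-fibrant.bisimplicial.sets} applied to $\partial \Delta^1 \embedinto \Delta^1$, since $\upM_\upv \argp{\Delta^1, X} = X_{1, \bullet}$ and $\upM_\upv \argp{\partial \Delta^1, X} \cong X_{0, \bullet} \times X_{0, \bullet}$.

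First I would treat existence and well-definedness together. Conditions (a)--(c) say precisely that $\sigma$ is a lift, through this Kan fibration, of the edge $\argp{p, s_{\bullet, 0} b}$ of $X_{0, \bullet} \times X_{0, \bullet}$ whose terminal vertex is pinned at $s_{0, \bullet} b$. The space $L$ of all such lifts is the fibre, over the relevant vertex, of the Leibniz cotensor of the anodyne inclusion $\{1\} = \Lambda^1_1 \embedinto \Delta^1$ with the Kan fibration; by the standard pushout--product properties of the Kan--Quillen model structure this cotensor is a trivial Kan fibration, so $L$ is a contractible Kan complex. Non-emptiness gives existence; connectedness gives well-definedness, for an edge of $L$ joining two fillers $\sigma_0, \sigma_1$ restricts over the initial vertex to an edge of $X_{1, \bullet}$ from $d_{\bullet, 1} \sigma_0$ to $d_{\bullet, 1} \sigma_1$ which projects to the constant edge at $\argp{a, b}$ and therefore lies in $X \argp{a, b}$. (A short identity computation confirms $d_{\bullet, 1} \sigma \in X \argp{a, b}$ in the first place.)

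For functoriality, a functor $\tau_1 \argp{X_{0, \bullet}} \to \Ho X$ is the same as a simplicial map $X_{0, \bullet} \to \nv \argp{\Ho X}$, so it is enough to check that $j_X$ sends degenerate edges to identities and respects the composites witnessed by $2$-simplices of $X_{0, \bullet}$. Identities are immediate: for $p = s_{\bullet, 0} a$ the totally degenerate bisimplex is a filler whose output is $s_{0, \bullet} a$, representing $\id_a$. For composition, given $\tau \in X_{0, 2}$ with $d_2 \tau = p_1$, $d_0 \tau = p_2$, $d_1 \tau = p_3$, I would choose connection squares $\sigma_2, \sigma_3$ for $p_2, p_3$ and build $\Theta \in X_{2, 1}$ by lifting $\{1\} \embedinto \Delta^1$ through the Kan fibration $X_{2, \bullet} \to \upM_\upv \argp{\Lambda^2_2, X}$ (again \autoref{lem:vertically.Reedy-fibrant.bisimplicial.sets}), with horizontal faces $d_{0, \bullet} \Theta = \sigma_2$, $d_{1, \bullet} \Theta = \sigma_3$ and terminal vertex the totally degenerate $2$-simplex at $c$. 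The bottom $W = d_{\bullet, 1} \Theta$ then lands in $X \argp{a, b, c}$; writing $m$ for its principal edge $d_{2, \bullet} W$ (an arrow $a \to b$), its remaining data are the principal edge $d_{0, \bullet} W = d_{\bullet, 1} \sigma_2$, which represents $j_X \argp{p_2}$, and the long edge $d_{1, \bullet} W = d_{\bullet, 1} \sigma_3$, which represents $j_X \argp{p_3}$. By the definition of composition in $\Ho X$ through the trivial fibration $X \argp{a, b, c} \to X \argp{a, b} \times X \argp{b, c}$ (\autoref{rem:Segal.operators.are.trivial.Kan.fibrations}), $W$ thus exhibits $j_X \argp{p_3} = j_X \argp{p_2} \circ [m]$, and everything reduces to proving $[m] = j_X \argp{p_1}$.

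That last identification is the crux, and the only place where $\tau$ itself (not merely its edges) is used: the square $S_{01} = d_{2, \bullet} \Theta$ has horizontal faces $p_3$ and $p_2$, so it is not a connection for $p_1$ on the nose. I would compare the two by lifting the outer horn $\Lambda^2_2 \embedinto \Delta^2$ (now in the vertical direction) through the Kan fibration $\argp{d_{1, \bullet}, d_{0, \bullet}} : X_{1, \bullet} \to X_{0, \bullet} \times X_{0, \bullet}$---here one uses that a Kan fibration fills \emph{all} horns, inner and outer alike---with base $2$-simplex $\argp{\tau, s_{\bullet, 0} p_2}$ and horn faces $d_{\bullet, 0} \Psi = s_{0, \bullet} p_2$ and $d_{\bullet, 1} \Psi = S_{01}$, producing $\Psi \in X_{1, 2}$. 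A routine computation with the simplicial identities shows the remaining face $d_{\bullet, 2} \Psi$ is a connection square for $p_1$, whence $d_{\bullet, 1} \argp{d_{\bullet, 2} \Psi}$ represents $j_X \argp{p_1}$; but $d_{\bullet, 1} d_{\bullet, 2} = d_{\bullet, 1} d_{\bullet, 1}$ forces $d_{\bullet, 1} \argp{d_{\bullet, 2} \Psi} = d_{\bullet, 1} S_{01} = m$, giving $[m] = j_X \argp{p_1}$ and hence composition. The main obstacle is precisely this step: keeping the horizontal/vertical face bookkeeping straight and arranging the prescribed lifting data (checking compatibility on overlaps) so that the unused face of $\Psi$ becomes a genuine $p_1$-connection while literally sharing its output with $S_{01}$. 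Everything else is formal; the Segal condition enters only through the trivial fibration of \autoref{rem:Segal.operators.are.trivial.Kan.fibrations}.
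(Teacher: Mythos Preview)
Your argument is correct. The existence and well-definedness of $\sigma$ are handled exactly as in the paper: your Leibniz-cotensor formulation of the lifting problem is precisely the adjoint transpose of the paper's observation that the inclusion
\[
\parens{\partial \Delta^1 \boxtimes \Delta^1} \cup \parens{\Delta^1 \boxtimes \Lambda^1_1} \hookrightarrow \Delta^1 \boxtimes \Delta^1
\]
is a vertical Reedy trivial cofibration, so there is no real difference there.

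For functoriality you take a genuinely different route. The paper proves composition by a single extension problem: it identifies one subcomplex of $\Delta^2 \boxtimes \Delta^2$ (containing the data $\tau$, the degenerate endpoints, and pieces of the connection squares), asserts that its inclusion into $\Delta^2 \boxtimes \Delta^2$ is a vertical Reedy trivial cofibration, and then reads off the desired relation in $\Ho X$ from any filler. Your argument instead decomposes the problem into two successive horn-fills against matching-map Kan fibrations: first building $\Theta \in X_{2,1}$ to obtain a $2$-simplex $W$ in $X \argp{a, b, c}$ witnessing $j_X \argp{p_3} = j_X \argp{p_2} \circ [m]$, and then building $\Psi \in X_{1,2}$ to identify $[m]$ with $j_X \argp{p_1}$. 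The trade-off is that the paper's version is shorter but leaves the verification that its displayed inclusion is a trivial cofibration to the reader, whereas your version is longer but entirely explicit---every compatibility check is a one-line simplicial identity, and the only model-categorical input is that outer horn inclusions are anodyne and that the matching maps of a Reedy-fibrant bisimplicial set are Kan fibrations. Both approaches ultimately rest on Reedy fibrancy; neither uses the Segal condition beyond \autoref{rem:Segal.operators.are.trivial.Kan.fibrations} in the definition of composition in $\Ho X$.
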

\begin{proof}
Let $U = \parens{\partial \Delta^1 \boxtimes \Delta^1} \cup \parens{\Delta^1 \boxtimes \Lambda^1_1} \subseteq \Delta^1 \boxtimes \Delta^1$. It is not hard to see that the inclusion $U \embedinto \Delta^1 \boxtimes \Delta^1$ is a trivial cofibration with respect to the vertical Reedy model structure: the columns of $U$ and $\Delta^1 \boxtimes \Delta^1$ are disjoint unions of contractible simplicial sets, and the inclusion $U \embedinto \Delta^1 \boxtimes \Delta^1$ induces a bijection of connected components in each column. The existence of $\sigma$ amounts to saying that morphisms $U \to X$ of a particular form can be extended along the inclusion $U \embedinto \Delta^1 \boxtimes \Delta^1$ in a homotopically unique way, and this is a straightforward application of axiom SM7 for simplicial model categories. Similarly, the claim that this construction defines a functor $\tau_1 X_{0, \bullet} \to \Ho X$ follows from the fact that the inclusion
\[
\parens{\set{0} \boxtimes \Delta^2} \cup \parens{\set{2} \boxtimes \Delta^2} \cup \parens{\Delta^2 \boxtimes \set{2}} \cup \parens{\Lambda^2_1 \boxtimes \Lambda^2_1} \hookrightarrow \Delta^2 \boxtimes \Delta^2 
\]
is a trivial cofibration with respect to the vertical Reedy model structure.
\end{proof}

\begin{remark}
In the case where $X = \Nv{\mathcal{C}}$ for a small category $\mathcal{C}$, the lemma says that, for every isomorphism $p : x_0 \to x_1$ in $\mathcal{C}$, there exists a morphism $f : x_0 \to x_1$ in $\mathcal{C}$ making the diagram below commute,
\[
\begin{tikzcd}
x_0 \dar[swap]{f} \rar{p} &
x_1 \dar{\id} \\
x_1 \rar[swap]{\id} &
x_1
\end{tikzcd}
\]
and this $f$ is moreover unique up to isomorphism. Indeed, in this situation, we must have $f = p$. The functoriality of the map $p \mapsto f$ reduces to the fact that we can paste commutative squares together as in the diagram below:
\[
\begin{tikzcd}
x_0 \dar[swap]{f_1} \rar{p_1} &
x_1 \dar[swap]{\id} \rar{p_2} &
x_2 \dar{\id} \\
x_1 \dar[swap]{f_2} \rar{\id} &
x_1 \dar[swap]{f_2} \rar{p_2} &
x_2 \dar{\id} \\
x_2 \rar[swap]{\id} &
x_2 \rar[swap]{\id} &
x_2
\end{tikzcd}
\]
\end{remark}

\begin{prop}
\label{prop:truncation.as.homotopy.category}
Let $X$ be a Segal space. For each small category $\mathcal{C}$, there is a bijection
\[
\Hom[\SSSet]{X}{\Nv{\mathcal{C}}} \cong \Hom[\Cat]{\Ho X}{\mathcal{C}}
\]
and it is natural in $\mathcal{C}$. In particular, $\Ho X$ is isomorphic to $\tau_1 X$.
\end{prop}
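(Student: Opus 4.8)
The plan is to realise the bijection as the obvious restriction map $\Phi$, to build an explicit inverse $\Psi$ out of the functor $j_X$ of \autoref{lem:connections.in.Segal.spaces}, and to deduce the final identification $\Ho X \cong \tau_1 X$ from the adjunction $\tau_1 \dashv \Nv$ by the Yoneda lemma. First I would observe that the assignment $X \mapsto \Ho X$ is functorial on morphisms of Segal spaces: a map $f : X \to Y$ sends vertices of $X_{0, \bullet}$ to vertices of $Y_{0, \bullet}$ and restricts to maps $X \argp{x_0, x_1} \to Y \argp{f x_0, f x_1}$, inducing $\Ho f : \Ho X \to \Ho Y$ after applying $\pi_0$; identities and composition are preserved because they are defined from $s_{0, \bullet}$ and $d_{1, \bullet}$, with which $f$ is compatible. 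By \autoref{lem:classifying.diagram.properties} the hom-spaces $\Nv{\mathcal{C}} \argp{x_0, x_1}$ are discrete with vertex set $\Hom[\mathcal{C}]{x_0}{x_1}$, whence a natural isomorphism $\Ho \Nv{\mathcal{C}} \cong \mathcal{C}$. I then set $\Phi$ to be the composite of $\Ho f$ with this isomorphism; naturality in $\mathcal{C}$ is immediate, since $\Ho \Nv{g}$ corresponds to $g$ under $\Ho \Nv{\mathcal{C}} \cong \mathcal{C}$.

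The substance lies in constructing the inverse $\Psi : \Hom[\Cat]{\Ho X}{\mathcal{C}} \to \Hom[\SSSet]{X}{\Nv{\mathcal{C}}}$. Given a functor $G : \Ho X \to \mathcal{C}$ and a bisimplex $\xi \in X_{n,m}$, I must produce a functor $\bracket{n} \times \mbfI \bracket{m} \to \mathcal{C}$, that is, an element of $\Nv{\mathcal{C}}_{n,m}$. On objects, $\tuple{i,j}$ is sent to $G$ of the vertex of $\xi$ at $\tuple{i,j}$. For a horizontal generating edge $\tuple{i,j} \to \tuple{i+1,j}$ I restrict $\xi$ to the relevant copy of $\Delta^1 \boxtimes \Delta^0$, obtaining a vertex of a hom-space of $X$ whose class in $\Ho X$ is carried into $\mathcal{C}$ by $G$. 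For a vertical generating edge $\tuple{i,j} \to \tuple{i,j+1}$ I restrict $\xi$ to a copy of $\Delta^0 \boxtimes \Delta^1$, obtaining an edge of the column $X_{0, \bullet}$; since $X_{0, \bullet}$ is a Kan complex by \autoref{lem:vertically.Reedy-fibrant.bisimplicial.sets}, the category $\tau_1 \argp{X_{0, \bullet}}$ is a groupoid, and the functor $j_X : \tau_1 \argp{X_{0, \bullet}} \to \Ho X$ of \autoref{lem:connections.in.Segal.spaces} turns this edge into an isomorphism of $\Ho X$, which $G$ sends to an isomorphism of $\mathcal{C}$. The vertical data assemble into a functor $\mbfI \bracket{m} \to \mathcal{C}$ because $j_X$ is already defined on the whole groupoid $\tau_1 \argp{X_{0, \bullet}}$ and the restriction of $\xi$ to $\set{i} \times \bracket{m}$ is a simplex of $X_{0, \bullet}$.

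The hard part will be checking that the horizontal and vertical data are compatible, that is, that the interchange squares of $\bracket{n} \times \mbfI \bracket{m}$ are respected. For each such square I would restrict $\xi$ to the subobject indexed by $\set{i,i+1} \times \set{j,j+1}$: the resulting element of $X_{1,1}$ is precisely a witness that the corresponding square commutes in $\Ho X$, once one unwinds the definition of composition in $\Ho X$ (through the Segal condition and $d_{1, \bullet}$) and of $j_X$ (through the connection $2$-simplices of \autoref{lem:connections.in.Segal.spaces}). Granting this, $\Psi(G)(\xi)$ is a well-defined functor and is visibly simplicial in $\tuple{n,m}$, so $\Psi(G)$ is a morphism of bisimplicial sets. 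That $\Phi$ and $\Psi$ are mutually inverse is then checked on objects and generating edges: $\Phi \Psi(G) = G$ holds by construction, and $\Psi \Phi(f) = f$ because a functor out of $\bracket{n} \times \mbfI \bracket{m}$ is determined by its values on objects and generating morphisms, which for $f(\xi)$ are computed from the low-dimensional faces of $\xi$ and hence agree with those of $\Psi \Phi(f)(\xi)$; discreteness of the hom-spaces of $\Nv{\mathcal{C}}$ ensures well-definedness on connected components throughout.

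Finally, combining the natural bijection just established with the adjunction bijection $\Hom[\SSSet]{X}{\Nv{\mathcal{C}}} \cong \Hom[\Cat]{\tau_1 X}{\mathcal{C}}$ from \autoref{thm:classifying.diagram.functor.homotopical.properties}, I obtain a natural isomorphism $\Hom[\Cat]{\tau_1 X}{\mathcal{C}} \cong \Hom[\Cat]{\Ho X}{\mathcal{C}}$ of functors of $\mathcal{C}$; the Yoneda lemma then yields the claimed isomorphism $\Ho X \cong \tau_1 X$.
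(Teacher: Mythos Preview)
Your approach is correct and shares the same forward map and the same crucial ingredient~$j_X$, but the construction of the inverse is organised differently from the paper's.

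You build~$\Psi(G)$ pointwise: for each bisimplex~$\xi \in X_{n,m}$ you assemble a functor $\bracket{n} \times \mbfI\bracket{m} \to \mathcal{C}$ from its restrictions to generating edges, using~$G$ on horizontal edges and~$G \circ j_X$ on vertical ones, and you correctly isolate the interchange square in~$X_{1,1}$ as the key verification. The paper instead constructs a single universal morphism $Q : X \to \Nv{\Ho X}$ and then sets the inverse to be $G \mapsto \Nv{G} \circ Q$. To build~$Q$, it transposes across the columnwise adjunction $\tau_1 \dashv \nv$: writing $\mathcal{A}_n = \tau_1\argp{X_{n,\bullet}}$ and $\mathcal{B}_n = \iso\Func{\bracket{n}}{\Ho X}$, it observes that~$\mathcal{B}_\bullet$ satisfies the strict Segal condition and is therefore $2$-coskeletal as a simplicial groupoid, so that a simplicial functor $\mathcal{A}_\bullet \to \mathcal{B}_\bullet$ is determined by its values in degrees~$\le 2$. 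The degree-$0$ component is~$j_X$, and the compatibility needed in degree~$1$ is exactly the equation
\[
\bracket{d_{\bullet, 0}\argp{\sigma}} \circ j_X\argp{\bracket{d_{1, \bullet}\argp{\sigma}}} = j_X\argp{\bracket{d_{0, \bullet}\argp{\sigma}}} \circ \bracket{d_{\bullet, 1}\argp{\sigma}}
\]
for~$\sigma \in X_{1,1}$, which is your interchange-square check stated explicitly. Your approach has the virtue of being concrete and self-contained; the paper's buys a cleaner bookkeeping (naturality in~$\mathcal{C}$ and bisimpliciality of the inverse are automatic once~$Q$ exists, and the $2$-coskeletality argument replaces your separate checks of horizontal functoriality and bisimplicial compatibility). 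One small point: to make $\Psi\Phi(f) = f$ go through on vertical generators you are implicitly using naturality of~$j_X$ in~$X$ together with the identification of~$j_{\Nv{\mathcal{C}}}$ with the inclusion $\iso\mathcal{C} \embedinto \mathcal{C}$, which is the content of the remark after \autoref{lem:connections.in.Segal.spaces}; it would be worth saying so.
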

\begin{proof}
Suppose we are given $F : X \to \Nv{\mathcal{C}}$. We define the functor $\bar{F} : \Ho X \to \mathcal{C}$ on objects by the map $F_{0,0} : X_{0,0} \to \Nv{\mathcal{C}}_{0,0}$, and \autoref{lem:classifying.diagram.properties} implies $F_{1, \bullet} : X_{1, \bullet} \to \Nv{\mathcal{C}}_{1, \bullet}$ induces well-defined maps $\Hom[\Ho X]{x_0}{x_1} \to \Hom[\mathcal{C}]{\bar{F} x_0}{\bar{F} x_1}$ that preserve the identity and composition, as required for a functor. The map $F \mapsto \bar{F}$ is clearly natural in $\mathcal{C}$, so to prove the proposition it is enough to construct an inverse for $F \mapsto \bar{F}$.

Let $\mathcal{A}_{\bullet}$ and $\mathcal{B}_{\bullet}$ be the simplicial objects in $\cat{\Cat}$ defined below:
\begin{align*}
\mathcal{A}_n & = \tau_1 \argp{X_{n, \bullet}} &
\mathcal{B}_n & = \iso \Func{\bracket{n}}{\Ho X}
\end{align*}
Observe that \autoref{lem:vertically.Reedy-fibrant.bisimplicial.sets} implies that the columns $X_{n, \bullet}$ are all Kan complexes, so the categories $\mathcal{A}_n$ are all groupoids. Thus $\mathcal{A}_{\bullet}$ and $\mathcal{B}_{\bullet}$ are in fact simplicial objects in $\cat{\Grpd}$. We previously showed in \autoref{prop:classifying.diagram.functor.properties} that the degreewise nerve of $\mathcal{B}_{\bullet}$ is isomorphic to $\Nv{\Ho X}$, so the adjunction $\tau_1 \dashv \nv$ yields a bijection between morphisms $X \to \Nv{\Ho X}$ and simplicial functors $\mathcal{A}_{\bullet} \to \mathcal{B}_{\bullet}$. It is not hard to see that $\mathcal{B}_{\bullet}$ satisfies the strict Segal condition and is therefore 2-coskeletal as a simplicial object in $\cat{\Grpd}$; thus simplicial functors $\mathcal{A}_{\bullet} \to \mathcal{B}_{\bullet}$ are freely and uniquely determined by their restrictions to degrees $\le 2$.

We define $P_0 : \mathcal{A}_0 \to \mathcal{B}_0$ by factoring the functor $j_X : \tau_1 \argp{X_{0, \bullet}} \to \Ho X$ of \autoref{lem:connections.in.Segal.spaces} using the universal property of $\mathcal{B}_0$ as the maximal subgroupoid of $\mathcal{C}$. There is then a unique functor $P_1 : \mathcal{A}_1 \to \mathcal{B}_1$ satisfying these conditions:
\begin{itemize}
\item For each vertex $f$ of $X \argp{x_0, x_1}$, $P_1$ sends the object $f$ in $\mathcal{A}_1$ to the object $\bracket{f}$ in $\mathcal{B}_1$, where $\bracket{f}$ denotes the connected component of $f$ in $X \argp{x_0, x_1}$.

\item We have $d_1 P_1 = P_0 d_1$ and $d_0 P_1 = P_0 d_0$ as functors.
\end{itemize}
This is because for any edge $\sigma$ of $X_{1, \bullet}$, we have
\[
\bracket{d_{\bullet, 0} \argp{\sigma}} \circ j_X \argp{\bracket{d_{1, \bullet} \argp{\sigma}}} = j_X \argp{\bracket{d_{0, \bullet} \argp{\sigma}}} \circ \bracket{d_{\bullet, 1} \argp{\sigma}}
\]
as morphisms in $\Ho X$. Since $\mathcal{B}_{\bullet}$ satisfies the strict Segal condition, there is at most one functor $P_2 : \mathcal{A}_2 \to \mathcal{B}_2$ compatible with $P_0$, $P_1$, and the 2-truncated simplicial structures of $\mathcal{A}_{\bullet}$ and $\mathcal{B}_{\bullet}$; such a $P_2$ \emph{does} exist by definition of composition in $\Ho X$. We therefore obtain a simplicial functor $P_{\bullet} : \mathcal{A}_{\bullet} \to \mathcal{B}_{\bullet}$, and hence a morphism $Q : X \to \Nv{\Ho X}$ by transposing degreewise across the adjunction $\tau_1 \dashv \nv$.

It is clear that the functor $\bar{Q} : \Ho X \to \Ho X$ induced by the morphism $Q : X \to \Nv{\Ho X}$ is $\id : \Ho X \to \Ho X$. To complete the proof of the proposition, we must show that we have $F = \Nv{\bar{F}} \circ Q$ for all small categories $\mathcal{C}$ and all morphisms $F : X \to \Nv{\mathcal{C}}$. Since the columns of $\Nv{\mathcal{C}}$ are nerves of groupoids, we can transpose across the adjunction $\tau_1 \dashv \nv$ and check the equation in the category of simplicial objects in $\cat{\Grpd}$; this is a straightforward matter of applying the methods employed in the previous paragraph to construct $P_{\bullet} : \mathcal{A}_{\bullet} \to \mathcal{B}_{\bullet}$. 
\end{proof}

\section{Complete Segal spaces}

Although the theory of Segal spaces looks very much like a homotopy-coherent theory of categories internal to Kan complexes, it is not as well-behaved as one would hope: the obvious notion of equivalence of Segal spaces (namely, Dwyer--Kan equivalences) does not agree with the weak equivalences obtained by taking the left Bousfield localisation of the vertical Reedy model structure with respect to Segal spaces. To remedy this, Rezk introduced a notion of `completeness' that connects the categorical structure of a Segal space $X$ with the homotopical structure of the Kan complex $X_{0, \bullet}$.

\begin{dfn}
A \strong{Dwyer--Kan equivalence} is a morphism $F : X \to Y$ between Segal spaces with the following properties:
\begin{itemize}
\item The induced functor $\tau_1 F : \tau_1 X \to \tau_1 Y$ is a categorical equivalence.

\item For each pair $\tuple{x_0, x_1}$ of vertices of $X_{0, \bullet}$, the hom-space morphisms $X \argp{x_0, x_1} \to Y \argp{F x_0, F x_1}$ induced by $F : X \to Y$ are weak homotopy equivalences in $\cat{\SSet}$.
\end{itemize}
\end{dfn}

In \autoref{lem:connections.in.Segal.spaces}, we saw that paths in the 0th column of a Segal space $X$ can be mapped functorially to isomorphisms in $\tau_1 X$. Informally speaking, a complete Segal space is a Segal space where this correspondence is reversible; the purpose of the following paragraphs is to make this precise.

\begin{dfn}
An \strong{equivalence} in a Segal space $X$ is a vertex $f$ of $X_{1, \bullet}$ such that $\bracket{f}$ is an isomorphism in $\tau_1 X$. We write $X_\mathrm{eq}$ for the maximal simplicial subset of $X_{1, \bullet}$ whose vertices are precisely the equivalences in $X$.
\end{dfn}

\begin{prop}
Let $X$ be a Segal space.
\begin{enumerate}[(i)]
\item If $f_0$ and $f_1$ are two vertices in the same connected component of $X_{1, \bullet}$, then $f_0$ is an equivalence if and only if $f_1$ is an equivalence.

\item $X_\mathrm{eq}$ is a union of connected components of $X_{1, \bullet}$.

\item The degeneracy operator $s_{0, \bullet} : X_{0, \bullet} \to X_{1, \bullet}$ factors through the inclusion $X_\mathrm{eq} \embedinto X_{1, \bullet}$.
\end{enumerate}
\end{prop}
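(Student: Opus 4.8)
The plan is to derive (ii) and (iii) formally from (i), and to prove (i) by pushing the whole question forward along the canonical comparison $\eta : X \to \Nv{\Ho X}$, where the condition ``$f$ is an equivalence'' turns into the patently homotopy-invariant condition ``$\bracket{f}$ is invertible in a groupoid''.

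For (i), I would start from the canonical morphism $\eta : X \to \Nv{\Ho X}$ supplied by \autoref{prop:truncation.as.homotopy.category} --- namely the one transposing to $\id : \Ho X \to \Ho X$ --- whose induced functor $\Ho X \to \Ho X$ is the identity. Tracing the construction, $\eta$ sends a vertex $f$ of $X_{1, \bullet}$ with source $x_0$ and target $x_1$ to the morphism $\bracket{f} : x_0 \to x_1$ of $\Ho X$, and it sends an edge $a$ of $X_{0, \bullet}$ to the isomorphism $j_X \argp{a}$ of \autoref{lem:connections.in.Segal.spaces} (the comparison on the $0$th column being induced by $j_X$, by construction). The essential point is the shape of the target column: by \autoref{prop:classifying.diagram.functor.properties} and \autoref{lem:classifying.diagram.properties}, $\Nv{\Ho X}_{1, \bullet}$ is the nerve of the groupoid $\iso \Func{\bracket{1}}{\Ho X}$, whose objects are the morphisms of $\Ho X$ and whose morphisms are the commutative squares in $\Ho X$ with invertible vertical edges. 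Since the connected components of the nerve of a groupoid are its isomorphism classes of objects, if $f_0$ and $f_1$ lie in the same connected component of $X_{1, \bullet}$ then $\bracket{f_0}$ and $\bracket{f_1}$ become isomorphic in $\iso \Func{\bracket{1}}{\Ho X}$; concretely, there are isomorphisms $u$ and $v$ in $\Ho X$ with $\bracket{f_1} \circ u = v \circ \bracket{f_0}$. As $u$ and $v$ are invertible, $\bracket{f_0}$ is an isomorphism if and only if $\bracket{f_1}$ is, which is exactly the assertion of (i).

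Claim (ii) then follows formally: by (i) the property of being an equivalence is constant on connected components of $X_{1, \bullet}$, so the equivalences form the vertex-set of a union $C$ of connected components; any simplicial subset whose vertices are the equivalences is contained in $C$ (each of its simplices lies in one component, which must be one of those making up $C$), whence the maximal such subset $X_\mathrm{eq}$ equals $C$. For (iii), for each vertex $x$ of $X_{0, \bullet}$ the degenerate edge $s_{0, \bullet} \argp{x}$ represents $\id_x$ in $\Ho X$ and is therefore an equivalence; so $s_{0, \bullet}$ sends every vertex of $X_{0, \bullet}$ into $X_\mathrm{eq}$. Since $X_\mathrm{eq}$ is a union of connected components of $X_{1, \bullet}$ and a simplicial map carries each connected component of its source into a single component of its target, the morphism $s_{0, \bullet}$ factors through $X_\mathrm{eq} \embedinto X_{1, \bullet}$.

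The step I expect to require the most care --- and the true content of (i) --- is the identification of $\eta$ on vertices and edges: that it realises $f \mapsto \bracket{f}$ and $a \mapsto j_X \argp{a}$, so that an edge of $X_{1, \bullet}$ from $f_0$ to $f_1$ is carried to a commuting square whose vertical legs are the connection isomorphisms. This is precisely what keeps the differing sources and targets of $f_0$ and $f_1$ under control, and it is delivered by the explicit description of $\eta$ in the proof of \autoref{prop:truncation.as.homotopy.category}, built degreewise from $j_X$ and from the connected-components map on hom-spaces. Once this is granted, the remainder is purely formal.
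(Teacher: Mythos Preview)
Your argument is correct, and in fact supplies a proof where the paper gives none: the paper simply cites Lemma~5.8 of \citep{Rezk:2001}. Your route via the unit $\eta : X \to \Nv{\Ho X}$ from \autoref{prop:truncation.as.homotopy.category} is a nice use of the machinery the paper has already set up, and it avoids re-deriving Rezk's internal analysis of the space of equivalences. There is no circularity: \autoref{prop:truncation.as.homotopy.category} precedes the present proposition and does not invoke it.

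One small simplification: for (i) you do not actually need the edge-level description of $\eta$ (the identification of $\eta_{0,\bullet}$ with $j_X$, or the explicit form of the commuting square). All you need is that $\eta_{1,\bullet}$ is a simplicial map sending a vertex $f$ to the object $\bracket{f}$ of $\iso \Func{\bracket{1}}{\Ho X}$ --- which is immediate from the construction of $P_1$ in the proof of \autoref{prop:truncation.as.homotopy.category} --- together with the observation that connected components of the nerve of a groupoid are its isomorphism classes. The fact that isomorphic arrows in $\Func{\bracket{1}}{\Ho X}$ are simultaneously invertible is then pure category theory. Your final paragraph is therefore more cautious than necessary; the ``true content'' is already contained in the vertex-level statement $\eta_{1,\bullet}\argp{f} = \bracket{f}$.
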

\begin{proof} \openproof
See Lemma 5.8 in \citep{Rezk:2001}.
\end{proof}

\begin{dfn}
A \strong{complete Segal space} is a Segal space $X$ such that the morphism $X_{0, \bullet} \to X_\mathrm{eq}$ is a weak homotopy equivalence in $\cat{\SSet}$. A \strong{weak CSS equivalence} is a morphism $F : X \to Y$ in $\cat{\SSSet}$ such that the induced morphisms
\[
\xHom{F}{Z}_{0, \bullet} : \xHom{Y}{Z}_{0, \bullet} \to \xHom{X}{Z}_{0, \bullet}
\]
are weak homotopy equivalences in $\cat{\SSet}$ for all complete Segal spaces $Z$.
\end{dfn}

\begin{lem}
\label{lem:completeness.and.univalence}
If $X$ is a complete Segal space, then there exists a commutative diagram in $\cat{\SSet}$ of the form below,
\[
\begin{tikzcd}
X_{0, \bullet} \dar \rar &
\xHom{\Delta^1}{X_{0, \bullet}} \dar \\
X_\mathrm{eq} \urar[dashed] \rar &
X_{0, \bullet} \times X_{0, \bullet}
\end{tikzcd}
\]
where the indicated arrow $X_\mathrm{eq} \to \xHom{\Delta^1}{X_{0, \bullet}}$ is a weak homotopy equivalence.
\end{lem}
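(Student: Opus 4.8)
The plan is to exhibit the dashed arrow as the solution of a single lifting problem in the Kan--Quillen model structure on $\cat{\SSet}$, the point being that the completeness hypothesis is exactly what makes the left-hand vertical arrow a trivial cofibration. First I would pin down the four solid arrows of the square. The top arrow $X_{0, \bullet} \to \xHom{\Delta^1}{X_{0, \bullet}}$ is the constant-path map induced by the unique map $\Delta^1 \to \Delta^0$; the left arrow is the degeneracy $s_{0, \bullet} : X_{0, \bullet} \to X_{1, \bullet}$, which factors through $X_\mathrm{eq}$ as recorded above; the right arrow is the endpoint evaluation $\xHom{\Delta^1}{X_{0, \bullet}} \to \xHom{\partial \Delta^1}{X_{0, \bullet}} \cong X_{0, \bullet} \times X_{0, \bullet}$ induced by $\partial \Delta^1 \embedinto \Delta^1$; and the bottom arrow is the source--target map $\tuple{d_{1, \bullet}, d_{0, \bullet}} : X_\mathrm{eq} \to X_{0, \bullet} \times X_{0, \bullet}$. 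Commutativity of the square is immediate, since both composites equal the diagonal: a constant path has equal endpoints, and $d_{0, \bullet} s_{0, \bullet} = d_{1, \bullet} s_{0, \bullet} = \id$.

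Next I would verify the two hypotheses needed for the lift. By \autoref{lem:vertically.Reedy-fibrant.bisimplicial.sets} the column $X_{0, \bullet}$ is a Kan complex, so the endpoint evaluation on the right is a Kan fibration (axiom SM7 applied to the cofibration $\partial \Delta^1 \embedinto \Delta^1$ and the fibrant object $X_{0, \bullet}$). The degeneracy $s_{0, \bullet}$ on the left is a split monomorphism, hence a cofibration, and it is a weak homotopy equivalence by the defining property of a complete Segal space; thus it is a trivial cofibration. The lifting property then produces the dashed arrow $\phi : X_\mathrm{eq} \to \xHom{\Delta^1}{X_{0, \bullet}}$ making both triangles commute, which is precisely the commutative diagram asserted.

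It then remains to check that $\phi$ is a weak homotopy equivalence, and here I would avoid any fibrewise comparison and simply apply $2$-out-of-$3$ to the upper triangle $\phi \circ s_{0, \bullet} = c$, where $c$ denotes the constant-path map. The map $c$ is a weak homotopy equivalence because it cotensors the homotopy equivalence $\Delta^1 \to \Delta^0$ into a Kan complex (equivalently, $c$ is a section of the trivial fibration $\xHom{\Delta^1}{X_{0, \bullet}} \to X_{0, \bullet}$ given by evaluation at a vertex), and $s_{0, \bullet}$ is a weak homotopy equivalence by completeness. Since both $c$ and $s_{0, \bullet}$ are weak equivalences, $2$-out-of-$3$ forces $\phi$ to be one as well.

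I do not anticipate a genuine obstacle: the argument is one lifting problem followed by a $2$-out-of-$3$ step. The only thing demanding care is the recognition that the completeness condition is what turns $s_{0, \bullet} : X_{0, \bullet} \to X_\mathrm{eq}$ into a trivial cofibration, and this single fact both produces the lift and powers the $2$-out-of-$3$ argument; everything else is formal bookkeeping with the Kan--Quillen model structure.
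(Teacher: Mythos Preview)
Your proposal is correct and follows essentially the same route as the paper: identify $s_{0,\bullet} : X_{0,\bullet} \to X_\mathrm{eq}$ as a trivial cofibration via completeness, lift against the Kan fibration $\xHom{\Delta^1}{X_{0,\bullet}} \to X_{0,\bullet} \times X_{0,\bullet}$, then invoke 2-out-of-3. Your write-up is simply more explicit about which triangle is used and why the constant-path map is a weak equivalence.
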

\begin{proof}
For any Segal space $X$, the morphism $X_{0, \bullet} \to X_\mathrm{eq}$ is a split monomorphism, so $X$ is a complete Segal space if and only if $X_{0, \bullet} \to X_\mathrm{eq}$ is a trivial cofibration with respect to the Kan--Quillen model structure. Since the morphism $\xHom{\Delta^1}{X_{0, \bullet}} \to \xHom{\partial \Delta^1}{X_{0, \bullet}} \cong X_{0, \bullet} \times X_{0, \bullet}$ induced by the boundary inclusion $\partial \Delta^1 \embedinto \Delta^1$ is a Kan fibration, the lifting property yields the desired morphism $X_\mathrm{eq} \to \xHom{\Delta^1}{X_{0, \bullet}}$, and it is a weak homotopy equivalence by the 2-out-of-3 property.
\end{proof}

\begin{thm}[Rezk]
The following data define a cartesian model structure on $\cat{\SSSet}$:
\begin{itemize}
\item The weak equivalences are the weak CSS equivalences.

\item The cofibrations are the monomorphisms.
\end{itemize}
We refer to this as the \strong{Rezk model structure for complete Segal spaces}, and the fibrant objects in this model structure are the complete Segal spaces.
\end{thm}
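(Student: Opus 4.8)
The plan is to obtain this model structure as a left Bousfield localization of the vertical Reedy model structure. That structure is cofibrantly generated (being a Reedy structure on a presheaf category over $\cat{\Simplex} \times \cat{\Simplex}$) and left proper (every object is cofibrant, since the cofibrations are the monomorphisms), and it is a simplicial model structure with respect to the vertical enrichment of $\cat{\SSSet}$; these are exactly the hypotheses of the standard existence theorem for left Bousfield localizations. Hence, for any set $S$ of morphisms there is a model structure on $\cat{\SSSet}$ whose cofibrations are again the monomorphisms, whose fibrant objects are the vertically Reedy-fibrant objects that are $S$-local, and whose weak equivalences are the $S$-local equivalences; here locality is tested by the vertical mapping space $\xHom{\blank}{\blank}_{0, \bullet}$.

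First I would choose $S$ so that its local objects are precisely the complete Segal spaces. Take $S$ to consist of the spine inclusions $G^n \boxtimes \Delta^0 \embedinto \Delta^n \boxtimes \Delta^0$ for $n \ge 1$, together with a single \emph{completeness} map $j$ detecting the invertibility of equivalences. A vertically Reedy-fibrant object $Z$ is local with respect to a spine inclusion precisely when $\xHom{\Delta^n \boxtimes \Delta^0}{Z}_{0, \bullet} \to \xHom{G^n \boxtimes \Delta^0}{Z}_{0, \bullet}$ is a weak homotopy equivalence; by \autoref{prop:bisimplicial.set.properties} this is exactly the map $\upM_\upv \argp{\Delta^n, Z} \to \upM_\upv \argp{G^n, Z}$, so locality against all the spine inclusions is the Segal condition. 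One then checks, using the description of $X_\mathrm{eq}$ and \autoref{lem:completeness.and.univalence}, that a Segal space is local with respect to $j$ if and only if $X_{0, \bullet} \to X_\mathrm{eq}$ is a weak homotopy equivalence, that is, exactly when $Z$ is complete. Granting this, the fibrant objects of the localization are precisely the complete Segal spaces, as the statement requires.

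With this choice of $S$, the identification of the weak equivalences is essentially definitional. An $S$-local equivalence is a morphism $F$ such that $\xHom{F}{Z}_{0, \bullet}$ is a weak homotopy equivalence for every fibrant $S$-local object $Z$; by the previous paragraph these are exactly the complete Segal spaces, and this is verbatim the definition of weak CSS equivalence given above. It remains to upgrade the localized structure to a cartesian one. The axiom that the terminal object be cofibrant is automatic, since every object is cofibrant. The compatibility axiom relating fibrations, cofibrations and the internal hom is the crux, and I expect it to be the main obstacle: one must show that the localization remains compatible with the cartesian closed structure inherited from the (already cartesian) vertical Reedy model structure.

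The cleanest route to this compatibility is to prove that the complete Segal spaces form an exponential ideal, \ie that $\xHom{X}{Z}$ is again a complete Segal space whenever $Z$ is a complete Segal space and $X$ is an arbitrary bisimplicial set. This is the standard criterion ensuring that a left Bousfield localization of a cartesian model structure is again cartesian, and concretely it reduces to checking that the product of each generating map in $S$ with an arbitrary representable $\Delta^n \boxtimes \Delta^m$ remains an $S$-local equivalence. The Segal half of this closure is a manipulation of $\upM_\upv \argp{\blank, \blank}$ against the spine inclusions, while the completeness half rests on \autoref{lem:completeness.and.univalence}. Once the exponential-ideal property is in hand, the pushout-product and pullback-hom axiom for the localized structure follows formally from the corresponding axiom for the vertical Reedy model structure, completing the verification.
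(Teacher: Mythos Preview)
The paper does not actually prove this theorem; it simply cites Theorem~7.2 of Rezk's paper, as the open-square QED symbol indicates. Your sketch is, in outline, precisely the argument Rezk gives there: left Bousfield localisation of the vertical Reedy structure at the spine inclusions together with a single completeness map, followed by a verification that complete Segal spaces form an exponential ideal in order to obtain cartesianness. So you have reconstructed the intended proof rather than found an alternative.

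Two places in your sketch are where the genuine work hides, and you have glossed over both. First, you never say what the completeness map $j$ is. Rezk's choice is (up to inessential variation) the unique morphism $\upJ^1 \boxtimes \Delta^0 \to \Delta^0 \boxtimes \Delta^0$, and showing that $j$-locality of a Segal space $X$ is equivalent to $X_{0,\bullet} \to X_\mathrm{eq}$ being a weak homotopy equivalence requires an explicit identification of the relevant mapping space with $X_\mathrm{eq}$; this is not a consequence of \autoref{lem:completeness.and.univalence}, which already \emph{assumes} completeness and so cannot serve as the characterisation you need. Second, the exponential-ideal claim --- equivalently, that crossing each map in $S$ with an arbitrary bisimplicial set remains an $S$-local equivalence --- is the entire content of the cartesianness assertion and is not a formal manipulation: Rezk devotes a separate argument to it, resting on the fact that $\upJ^1 \boxtimes \Delta^0$ behaves as an interval object. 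Your plan is correct in shape, but these two verifications are the theorem, not afterthoughts.
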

\begin{proof} \openproof
See Theorem 7.2 in \citep{Rezk:2001}.
\end{proof}

\begin{remark}
It follows from the definition of weak CSS equivalence and axiom SM7 for simplicial model structures that every vertical weak homotopy equivalence in $\cat{\SSSet}$ is also a weak CSS equivalence. Thus the Rezk model structure for complete Segal spaces is a left Bousfield localisation of the vertical Reedy model structure.
\end{remark}

\begin{lem}
\label{lem:equivalences.and.homotopies}
Let $X$ be a complete Segal space, and let $x_0$ and $x_1$ be vertices of $X_{0, \bullet}$. The following are equivalent:
\begin{enumerate}[(i)]
\item $x_0$ and $x_1$ are isomorphic as objects in $\tau_1 X$.

\item $x_0$ and $x_1$ are in the same connected component of $X_{0, \bullet}$.

\item $x_0$ and $x_1$ are left homotopic as morphisms $\Delta^0 \boxtimes \Delta^0 \to X$ in the Rezk model structure for complete Segal spaces.
\end{enumerate}
\end{lem}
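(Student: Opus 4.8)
The plan is to treat condition~(ii) as the hub and to establish (i)\,$\Leftrightarrow$\,(ii) and (ii)\,$\Leftrightarrow$\,(iii) separately, since the two equivalences rest on quite different facts: completeness for the former, and a cylinder computation for the latter.

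First I would dispose of (ii)\,$\Leftrightarrow$\,(iii). Note that $\Delta^0 \boxtimes \Delta^0$ is the terminal bisimplicial set, so by the adjunction of \autoref{prop:bisimplicial.set.properties}(iii) a morphism $\Delta^0 \boxtimes \Delta^0 \to X$ is exactly a vertex of $X_{0, \bullet}$, and since the cofibrations are the monomorphisms this object is cofibrant; moreover $X$ is fibrant in the Rezk model structure because it is a complete Segal space. The key observation is that $\Delta^0 \boxtimes \Delta^1$ is a cylinder object for $\Delta^0 \boxtimes \Delta^0$: the inclusion $\Delta^0 \boxtimes \partial \Delta^1 \embedinto \Delta^0 \boxtimes \Delta^1$ is a monomorphism, and the projection $\Delta^0 \boxtimes \Delta^1 \to \Delta^0 \boxtimes \Delta^0$ is a vertical weak homotopy equivalence, hence a weak CSS equivalence. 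Because left homotopy between maps out of a cofibrant object into a fibrant object is an equivalence relation independent of the choice of cylinder, I may compute it using this cylinder. The same adjunction identifies morphisms $\Delta^0 \boxtimes \Delta^1 \to X$ with edges of the simplicial set $X_{0, \bullet}$, with the two restrictions along $\Delta^0 \boxtimes \partial \Delta^1$ corresponding to the two endpoints. Thus $x_0$ and $x_1$ are left homotopic if and only if they are joined by an edge of $X_{0, \bullet}$; since $X_{0, \bullet}$ is a Kan complex (\autoref{lem:vertically.Reedy-fibrant.bisimplicial.sets}), this is exactly the condition that they lie in a common connected component, namely~(ii).

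Next, (ii)\,$\Rightarrow$\,(i) is the easy half of the remaining equivalence. As $X_{0, \bullet}$ is a Kan complex, the category $\tau_1 \argp{X_{0, \bullet}}$ is a groupoid in which two objects are isomorphic precisely when they lie in the same connected component. The functor $j_X : \tau_1 \argp{X_{0, \bullet}} \to \Ho X$ of \autoref{lem:connections.in.Segal.spaces} is the identity on objects, so it carries an isomorphism $x_0 \cong x_1$ in $\tau_1 \argp{X_{0, \bullet}}$ to an isomorphism $x_0 \cong x_1$ in $\Ho X \cong \tau_1 X$, which is~(i).

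The main obstacle is the converse (i)\,$\Rightarrow$\,(ii), which is the only place completeness is genuinely needed. Suppose $x_0 \cong x_1$ in $\tau_1 X$; by the definition of equivalence, this isomorphism is represented by a vertex $f$ of $X_\mathrm{eq}$ with $d_{1, \bullet} \argp{f} = x_0$ and $d_{0, \bullet} \argp{f} = x_1$. Completeness says that $X_{0, \bullet} \to X_\mathrm{eq}$ is a weak homotopy equivalence, in particular a bijection on connected components, and its composite with $X_\mathrm{eq} \embedinto X_{1, \bullet}$ is the degeneracy $s_{0, \bullet}$. Hence $f$ lies in the same connected component of $X_\mathrm{eq}$ as some degenerate edge $s_{0, \bullet} \argp{x}$. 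Choosing a path from $f$ to $s_{0, \bullet} \argp{x}$ inside $X_\mathrm{eq} \subseteq X_{1, \bullet}$ and applying the face operators $d_{1, \bullet}$ and $d_{0, \bullet}$, I obtain paths in $X_{0, \bullet}$ from $x_0$ to $x$ and from $x_1$ to $x$ respectively, since both face operators send $s_{0, \bullet} \argp{x}$ to $x$. Therefore $x_0$ and $x_1$ lie in the same connected component of $X_{0, \bullet}$, which is~(ii). The crux is the realisation that completeness lets one replace an arbitrary equivalence by a degenerate edge up to a path, thereby transporting the ``isomorphism'' information carried on $X_{1, \bullet}$ back to a ``connected component'' statement on $X_{0, \bullet}$.
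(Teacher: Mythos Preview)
Your proof is correct. The treatment of (ii)\,$\Leftrightarrow$\,(iii) matches the paper's almost verbatim: both identify $\Delta^0 \boxtimes \Delta^1$ as a cylinder object for the terminal bisimplicial set and use that $X$ is fibrant and $X_{0,\bullet}$ is a Kan complex to reduce left homotopy to the existence of an edge.

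Where you diverge is in (i)\,$\Leftrightarrow$\,(ii). The paper dispatches this in one line as a corollary of \autoref{lem:completeness.and.univalence}, which produces an actual morphism $X_\mathrm{eq} \to \xHom{\Delta^1}{X_{0,\bullet}}$ over $X_{0,\bullet} \times X_{0,\bullet}$ that is a weak homotopy equivalence; the equivalence of (i) and (ii) then follows by reading off what happens on vertices over a fixed pair $(x_0,x_1)$. Your argument avoids constructing that lift entirely: for (ii)\,$\Rightarrow$\,(i) you invoke the functor $j_X$ of \autoref{lem:connections.in.Segal.spaces}, and for (i)\,$\Rightarrow$\,(ii) you use only the $\pi_0$-bijection coming from completeness, then push a path in $X_\mathrm{eq}$ down through the face operators $d_{0,\bullet}$ and $d_{1,\bullet}$. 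This is more hands-on but also more self-contained, since it does not require the lifting argument of \autoref{lem:completeness.and.univalence}. One small point worth making explicit: $X_\mathrm{eq}$ is a union of connected components of the Kan complex $X_{1,\bullet}$, hence itself a Kan complex, so your ``path'' can be taken to be a single edge; but even a zigzag of edges would suffice for the face-map argument.
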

\begin{proof}
(i) \iff (ii). This is a corollary of \autoref{lem:completeness.and.univalence}.

\bigskip\noindent
(ii) \implies (iii). The unique morphism $\Delta^0 \boxtimes \Delta^1 \to \Delta^0 \boxtimes \Delta^0$ is a vertical weak homotopy equivalence, so it is a weak CSS equivalence \emph{a fortiori}. This shows that $\Delta^0 \boxtimes \Delta^1$ (with the evident inclusions and projection) is a cylinder object for $\Delta^0 \boxtimes \Delta^0$ in the Rezk model structure for complete Segal spaces. Since $X_{0, \bullet}$ is a Kan complex, $x_0$ and $x_1$ are in the same connected component if and only if there exists an edge $p : x_0 \to x_1$ in $X_{0, \bullet}$. Thus, we obtain a morphism $\Delta^0 \boxtimes \Delta^1 \to X$ in $\cat{\SSSet}$, \ie a left homotopy from $x_0$ to $x_1$ considered as morphisms $\Delta^0 \boxtimes \Delta^0 \to X$.

\bigskip\noindent
(iii) \implies (ii). We have already seen that $\Delta^0 \boxtimes \Delta^1$ is a cylinder object for $\Delta^0 \boxtimes \Delta^0$; but $X$ is fibrant in the Rezk model structure for complete Segal spaces, so two morphisms $\Delta^0 \boxtimes \Delta^0 \to X$ are left homotopic if and only if they are left homotopic with respect to the cylinder object $\Delta^0 \boxtimes \Delta^1$. (This is a general fact about model categories: see \eg Corollary 1.2.6 in \citep{Hovey:1999}.)
\end{proof}

\begin{thm}[Joyal--Tierney]
\label{thm:Joyal-Tierney.totalisation}
Let $\upJ^m = \nv{\mbfI \bracket{m}}$ and let $t^! : \cat{\SSSet} \to \cat{\SSet}$ be the functor defined by the formula below:
\[
\parens{t^! X}_{n,m} = \Hom[\SSet]{\Delta^n \times \upJ^m}{X}
\]
\begin{enumerate}[(i)]
\item $t^!$ has a left adjoint, $t_! : \cat{\SSSet} \to \cat{\SSet}$, which is the unique (up to unique isomorphism) colimit-preserving functor that sends $\Delta^n \boxtimes \Delta^m$ to the simplicial set $\Delta^n \times \upJ^m$.

\item $\Nv : \cat{\Cat} \to \cat{\SSSet}$ is isomorphic to the composite $t^! \nv$.

\item The adjunction
\[
t_! \dashv t^! : \cat{\SSet} \to \cat{\SSSet}
\]
is a Quillen equivalence with respect to the Joyal model structure for quasicategories and the Rezk model structure for complete Segal spaces.
\end{enumerate}
\end{thm}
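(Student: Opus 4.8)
The plan is to dispatch (i) and (ii) formally and to reduce (iii) to the two homotopical facts at the heart of the Joyal--Tierney comparison. For (i), note that $t^!$ is the nerve functor associated with the functor $r : \cat{\Simplex} \times \cat{\Simplex} \to \cat{\SSet}$ sending $\tuple{\bracket{n}, \bracket{m}}$ to $\Delta^n \times \upJ^m$, in the sense that $\parens{t^! X}_{n,m} = \Hom[\SSet]{\Delta^n \times \upJ^m}{X}$. Every bisimplicial set is canonically a colimit of the representables $\Delta^n \boxtimes \Delta^m$, so there is at most one colimit-preserving functor carrying $\Delta^n \boxtimes \Delta^m$ to $\Delta^n \times \upJ^m$; taking $t_!$ to be the left Kan extension of $r$ along the Yoneda embedding $\cat{\Simplex} \times \cat{\Simplex} \to \cat{\SSSet}$ produces such a functor, and the usual coend computation exhibits it as left adjoint to $t^!$. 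This is the standard nerve--realisation paradigm.

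For (ii) we compute directly. Since $\Delta^n \cong \nv{\bracket{n}}$ and $\upJ^m = \nv{\mbfI \bracket{m}}$, and since the nerve $\nv : \cat{\Cat} \to \cat{\SSet}$ is fully faithful and preserves products, there are natural bijections
\begin{align*}
\parens{t^! \nv{\mathcal{C}}}_{n,m}
&= \Hom[\SSet]{\Delta^n \times \upJ^m}{\nv{\mathcal{C}}}
\cong \Hom[\SSet]{\nv{\bracket{n} \times \mbfI \bracket{m}}}{\nv{\mathcal{C}}} \\
&\cong \Hom[\Cat]{\bracket{n} \times \mbfI \bracket{m}}{\mathcal{C}}
= \Nv{\mathcal{C}}_{n,m}
\end{align*}
compatible with the bisimplicial operators, whence $t^! \nv \cong \Nv$.

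We turn to (iii). The cofibrations in both the Joyal and the Rezk model structures are the monomorphisms, so to see that $t_! \dashv t^!$ is a Quillen pair it is enough to show that $t_!$ preserves monomorphisms and carries trivial cofibrations to weak categorical equivalences. Preservation of monomorphisms is checked on the generating cofibrations $\partial \parens{\Delta^n \boxtimes \Delta^m} \embedinto \Delta^n \boxtimes \Delta^m$ by a skeletal analysis, using that $t_!$ preserves colimits. For the trivial cofibrations I would route through the vertical Reedy structure: as the Rezk structure is a left Bousfield localisation of the vertical Reedy structure, it suffices to show that $t_!$ is left Quillen from the vertical Reedy structure to the Joyal structure and that it carries the localising maps---those enforcing the Segal condition, built from the spine inclusions $G^n \boxtimes \Delta^0 \embedinto \Delta^n \boxtimes \Delta^0$, together with the completeness map---to weak categorical equivalences. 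The essential content here is that $t^!$ sends quasicategories to complete Segal spaces, and the two computations that drive this are that the $0$th row $\parens{t^! X}_{\bullet, 0}$ is $X$ itself (because $\upJ^0 \cong \Delta^0$) and that the $0$th column $\parens{t^! X}_{0, \bullet}$ is the core $\Hom[\SSet]{\upJ^\bullet}{X}$, so that the completeness of $t^! X$ reduces to the standard identification of the space of isomorphisms of a quasicategory with its core.

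To promote the Quillen pair to a Quillen equivalence I would invoke the standard criterion (see \citep{Hovey:1999}): it suffices that $t^!$ reflect weak equivalences between fibrant objects and that, for every bisimplicial set $A$ (all objects being cofibrant), the derived unit $A \to t^! \parens{R\, t_! A}$ be a weak CSS equivalence, where $R$ denotes a quasicategory replacement in the Joyal structure. Reflection is the more tractable half: because the $0$th row of $t^! Z$ recovers $Z$, a map of quasicategories $Z \to W$ that $t^!$ sends to a weak CSS equivalence restricts to a weak categorical equivalence on $0$th rows, and together with the column computation and \autoref{lem:equivalences.and.homotopies} this forces $Z \to W$ to be a weak categorical equivalence. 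The derived unit is the crux, and I expect it to be the principal obstacle, since it encodes the genuine homotopical comparison between the two theories rather than any formal manipulation. Following \citet{Joyal-Tierney:2007}, I would first treat the representables $A = \Delta^n \boxtimes \Delta^m$, where $t_! A = \Delta^n \times \upJ^m$ and the unit can be analysed by hand using the contractibility of the $\upJ^m$ and completeness, and then propagate to arbitrary $A$ by a skeletal induction, exploiting that weak CSS equivalences are stable under the relevant homotopy colimits. The delicate thread throughout is the interaction of $t_!$ with completeness---the reconciliation of the space of objects $A_{0, \bullet}$ of a complete Segal space with the core of the associated quasicategory $t_! A$.
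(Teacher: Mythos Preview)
Your arguments for (i) and (ii) are essentially the paper's own: both invoke the nerve--realisation paradigm (left Kan extension along Yoneda) for $t_!$, and both reduce (ii) to the natural isomorphism $\Delta^n \times \upJ^m \cong \nv{\bracket{n} \times \mbfI\bracket{m}}$ together with full faithfulness of $\nv$.

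For (iii) the paper does not argue at all: it simply cites Theorem~4.12 of \citet{Joyal-Tierney:2007}. Your sketch therefore goes beyond the paper, which is reasonable, but your reflection step contains a genuine gap. You write that because the $0$th row of $t^! Z$ recovers $Z$, a map $f : Z \to W$ with $t^! f$ a weak CSS equivalence ``restricts to a weak categorical equivalence on $0$th rows'', and then conclude that $f$ is a weak categorical equivalence. But the $0$th row of $t^! f$ \emph{is} $f$, so that sentence is asserting exactly what is to be proved. What you actually know is that $t^! f$ is a \emph{columnwise} weak homotopy equivalence (weak equivalences between fibrant objects in a Bousfield localisation agree with those in the unlocalised structure); this says nothing directly about rows. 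The column information can be leveraged---$(t^! Z)_{n, \bullet} \cong \Hom[\SSet]{\upJ^\bullet}{\xHom{\Delta^n}{Z}}$ is a model for the core of $\xHom{\Delta^n}{Z}$, and knowing these cores are weakly equivalent for all $n$ does force $f$ to be a weak categorical equivalence---but that deduction requires an actual argument (essential surjectivity from $n = 0$, full faithfulness from the fibres over $n = 1$), not the row observation. Joyal and Tierney themselves route the comparison through the companion adjunction $p_1^* \dashv i_1^*$ and the identity $i_1^* t^! \cong \id$, which packages the same content differently; either way, the step you labelled ``more tractable'' is where the real work sits.
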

\begin{proof} \openproof
(i). As with \autoref{thm:classifying.diagram.functor.homotopical.properties}, use either the accessible adjoint functor theorem or the theory of Kan extensions to construct the left adjoint, and then apply the Yoneda lemma to deduce its action on representable bisimplicial sets.

\bigskip\noindent
(ii). This is an immediate consequence of the fact that $\Delta^n \times \upJ^m$ is naturally isomorphic to $\nv{\bracket{n} \times \mbfI \bracket{m}}$.

\bigskip\noindent
(iii). See Theorem 4.12 in \citep{Joyal-Tierney:2007}.
\end{proof}

\begin{cor}
The adjunction
\[
\tau_1 \dashv {\Nv} : \cat{\Cat} \to \cat{\SSSet}
\]
is a Quillen adjunction with respect to the natural model structure on $\cat{\Cat}$ and the Rezk model structure for complete Segal spaces on $\cat{\SSSet}$.
\end{cor}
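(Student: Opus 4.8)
The plan is to deduce this from the factorisation of the classifying-diagram functor through the nerve, together with the Joyal--Tierney totalisation. By \autoref{thm:Joyal-Tierney.totalisation}(ii) there is a natural isomorphism $\Nv \cong t^! \nv$, so it suffices to show that the right adjoint $\Nv$ is a right Quillen functor for the natural model structure on $\cat{\Cat}$ and the Rezk model structure on $\cat{\SSSet}$; being a Quillen adjunction is a condition on the right adjoint alone (preservation of fibrations and trivial fibrations), so once this is established the conclusion is immediate.

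First I would recall that $\nv : \cat{\Cat} \to \cat{\SSet}$ is right Quillen for the natural model structure on $\cat{\Cat}$ and the Joyal model structure on $\cat{\SSet}$: this is precisely \autoref{prop:truncation-nerve.adjunction.for.quasicategories}. Next, $t^! : \cat{\SSet} \to \cat{\SSSet}$ is right Quillen for the Joyal model structure and the Rezk model structure for complete Segal spaces, since $t_! \dashv t^!$ is a Quillen equivalence by \autoref{thm:Joyal-Tierney.totalisation}(iii) and the right adjoint of a Quillen equivalence is in particular right Quillen. A composite of right Quillen functors is again right Quillen, so $t^! \nv : \cat{\Cat} \to \cat{\SSSet}$ is right Quillen; and because fibrations and trivial fibrations are stable under isomorphism, the right-Quillen property transfers along the natural isomorphism $\Nv \cong t^! \nv$, so $\Nv$ itself is right Quillen. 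Since $\tau_1$ is by construction the left adjoint of $\Nv$ (\autoref{thm:classifying.diagram.functor.homotopical.properties}(iii)), the displayed adjunction $\tau_1 \dashv \Nv$ is therefore a Quillen adjunction.

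I do not expect a genuine obstacle here: the whole argument reduces to transitivity of the right-Quillen property along the factorisation $\Nv \cong t^! \nv$, with both factors already known to be right Quillen from results in hand. The one point that merits care is bookkeeping rather than mathematics, namely that we never need to analyse the behaviour of $\tau_1 : \cat{\SSSet} \to \cat{\Cat}$ directly, so the potential confusion between the bisimplicial $\tau_1$ and the simplicial truncation functor $\tau_1$ implicit in the definition of $t^!$ is immaterial---the Quillen condition is detected entirely on the right adjoints $\nv$ and $t^!$.
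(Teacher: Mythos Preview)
Your proof is correct and follows essentially the same approach as the paper: both arguments compose the Quillen adjunction $\tau_1 \dashv \nv$ of \autoref{prop:truncation-nerve.adjunction.for.quasicategories} with the Quillen equivalence $t_! \dashv t^!$ of \autoref{thm:Joyal-Tierney.totalisation}, using the identification $\Nv \cong t^! \nv$. The paper's proof is a single sentence to this effect, while you have spelled out the bookkeeping more carefully.
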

\begin{proof}
Quillen adjunctions can be composed, so combine \autoref{prop:truncation-nerve.adjunction.for.quasicategories} and \autoref{thm:Joyal-Tierney.totalisation}.
\end{proof}

\begin{cor}
\label{cor:truncation.and.totalisation}
For all quasicategories $X$, the functor $\tau_1 t_! t^! X \to \tau_1 X$ induced by the adjunction counit $t_! t^! \hoto \id_{\cat{\SSet}}$ is an isomorphism of categories.
\end{cor}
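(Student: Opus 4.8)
The plan is to prove that the comparison functor, which is $\tau_1 \epsilon_X \colon \tau_1 t_! t^! X \to \tau_1 X$ for $\epsilon_X \colon t_! t^! X \to X$ the counit of $t_! \dashv t^!$, is simultaneously a categorical equivalence and a bijection on objects. A fully faithful functor that is bijective on objects has a strict inverse, so these two facts together yield an isomorphism of categories. The point worth emphasising is that the Quillen equivalence by itself only delivers a categorical equivalence (fully faithful and essentially surjective); the extra work is entirely at the level of objects.

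To see that $\tau_1 \epsilon_X$ is a categorical equivalence, first observe that $X$, being a quasicategory, is fibrant in the Joyal model structure, so $t^! X$ is fibrant in the Rezk model structure for complete Segal spaces, $t^!$ being right Quillen; thus $t^! X$ is a complete Segal space, and it is cofibrant as well since every bisimplicial set is. The transpose of $\epsilon_X$ across $t_! \dashv t^!$ is $\id_{t^! X}$ by the triangle identity, and since $t_! \dashv t^!$ is a Quillen equivalence by \autoref{thm:Joyal-Tierney.totalisation}, the counit $\epsilon_X$ is therefore a weak categorical equivalence. By \autoref{prop:truncation-nerve.adjunction.for.quasicategories} the functor $\tau_1 \colon \cat{\SSet} \to \cat{\Cat}$ is left Quillen, and as all objects of $\cat{\SSet}$ are cofibrant in the Joyal model structure, Ken Brown's lemma shows that it preserves weak equivalences; hence $\tau_1 \epsilon_X$ is a categorical equivalence, and in particular fully faithful.

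For the action on objects, recall that the objects of $\tau_1 X$ are exactly the vertices $X_0$. On the other side, \autoref{thm:Joyal-Tierney.totalisation} supplies the isomorphism $\Nv \cong t^! \nv$; passing to left adjoints shows that the bisimplicial functor $\tau_1 \colon \cat{\SSSet} \to \cat{\Cat}$ is naturally isomorphic to $\tau_1 t_!$, where the outer $\tau_1$ is now the simplicial one. Evaluating at $t^! X$ yields an isomorphism of categories between $\tau_1 t_! t^! X$ and the value of the bisimplicial $\tau_1$ on $t^! X$; because $t^! X$ is a Segal space, \autoref{prop:truncation.as.homotopy.category} identifies the latter with $\Ho \parens{t^! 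X}$, whose objects are the vertices of $\parens{t^! X}_{0, \bullet}$, that is, $\parens{t^! X}_{0,0} \cong X_0$. Tracing a vertex $x \in X_0$ through these identifications, the functor $\tau_1 \epsilon_X$ sends it to the vertex $\epsilon_X \circ t_! \parens{x}$ obtained by applying $t_!$ to the corresponding map $\Delta^0 \boxtimes \Delta^0 \to t^! X$ and composing with the counit; the triangle identity recognises this as the transpose of $x$, namely $x$ itself.

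The main obstacle is precisely this last identification: one must check that the two categories genuinely have $X_0$ as their common object set and that $\tau_1 \epsilon_X$ acts as the identity there, rather than as a mere bijection-up-to-isomorphism. Once this is in hand, $\tau_1 \epsilon_X$ is fully faithful and bijective on objects, hence an isomorphism of categories, as required.
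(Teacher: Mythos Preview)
Your argument is correct. The first half, establishing that $\tau_1 \epsilon_X$ is a categorical equivalence, is exactly the paper's argument: the counit at a fibrant object is a weak categorical equivalence because $t_! \dashv t^!$ is a Quillen equivalence and every bisimplicial set is cofibrant, and then $\tau_1$ carries this to a categorical equivalence by Ken Brown.

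Where you diverge from the paper is in the verification that $\tau_1 \epsilon_X$ is bijective on objects. The paper simply asserts that the counit $\epsilon_X : t_! t^! X \to X$ is a bijection on \emph{vertices of simplicial sets}, which one checks directly: the functor $W \mapsto (t_! W)_0$ is colimit-preserving and agrees with $W \mapsto W_{0,0}$ on representables (since $(\Delta^n \times \upJ^m)_0 = \Delta^n_0 \times \Delta^m_0$), hence $(t_! W)_0 \cong W_{0,0}$ naturally, and $(t^! X)_{0,0} = X_0$ by definition. Since the objects of $\tau_1$ of any simplicial set are its vertices, this immediately gives the bijection on objects. Your route instead passes through the natural isomorphism $\tau_1 t_! \cong \tau_1^{\cat{\SSSet}}$ obtained from $\Nv \cong t^! \nv$, then invokes \autoref{prop:truncation.as.homotopy.category} to identify $\tau_1^{\cat{\SSSet}} (t^! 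X)$ with $\Ho(t^! X)$, and finally reads off the object set as $(t^! X)_{0,0} \cong X_0$. This is valid and has the virtue of using the structural results already developed in the paper, but it is heavier machinery for what is ultimately a one-line vertex calculation; the paper's direct check avoids the need to trace the object map through a chain of natural isomorphisms.
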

\begin{proof}
The adjunction $t_! \dashv t^!$ is a Quillen equivalence, $X$ is fibrant with respect to the Joyal model structure for quasicategories, and all bisimplicial sets are cofibrant with respect to the Rezk model structure for complete Segal spaces, so the counit component $t_! t^! X \to X$ must be a weak categorical equivalence. Ken Brown's lemma then implies that the induced functor $\tau_1 t_! t^! X \to \tau_1 X$ is a categorical equivalence. However, it is not hard to check that $t_! t^! X \to X$ is always a bijection on vertices, so $\tau_1 t_! t^! X \to \tau_1 X$ must in fact be an isomorphism of categories.
\end{proof}

\begin{prop}
Let $\cat{\CSS}$ be the full subcategory of $\cat{\SSSet}$ spanned by the complete Segal spaces.
\begin{enumerate}[(i)]
\item $\cat{\CSS}$ is closed under small products and exponential objects in $\cat{\SSSet}$; in particular, $\cat{\CSS}$ is a cartesian closed category.

\item Let $\Ho \cat{\CSS}$ be the localisation of $\cat{\CSS}$ at the weak categorical equivalences. The localisation functor $\cat{\CSS} \to \Ho \cat{\CSS}$ preserves small products and exponential objects.

\item If $X$ and $Y$ are complete Segal spaces, then the categorical equivalence class of $\tau_1 \xHom{X}{Y}$ depends only on the weak CSS equivalence classes of $X$ and $Y$.
\end{enumerate}
\end{prop}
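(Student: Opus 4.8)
The plan is to recognise this as the exact analogue of the corresponding corollary for quasicategories and to deduce all three claims from \autoref{prop:Hurewicz.homotopy.category}, just as there. Taking $\mathcal{M} = \cat{\SSSet}$ equipped with the Rezk model structure for complete Segal spaces, the hypotheses of that proposition are readily verified: the Rezk model structure is a cartesian model structure (Rezk's theorem, recorded above), and all of its fibrant objects are cofibrant, since the cofibrations are precisely the monomorphisms and hence every bisimplicial set is cofibrant. As the fibrant objects are exactly the complete Segal spaces, $\cat{\CSS}$ is the full subcategory $\mathcal{M}_\mathrm{f}$, and $\Ho \cat{\CSS}$ its localisation at the weak CSS equivalences. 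Claims (i) and (ii) are then immediate: part (i) of \autoref{prop:Hurewicz.homotopy.category} gives the cartesian closedness of $\cat{\CSS}$ together with its closure under exponentials, and parts (ii)--(iii) show that the localisation functor $\cat{\CSS} \to \Ho \cat{\CSS}$ preserves small products and exponential objects.

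For claim (iii), I would argue as follows. By part (i), $\xHom{X}{Y}$ is again a complete Segal space whenever $X$ and $Y$ are, and by part (ii) its isomorphism class in $\Ho \cat{\CSS}$---equivalently, its weak CSS equivalence class---depends only on those of $X$ and $Y$. It therefore remains to check that the categorical equivalence class of $\tau_1 Z$ is an invariant of the weak CSS equivalence class of a complete Segal space $Z$. This is where the classifying-diagram adjunction enters: by \autoref{thm:classifying.diagram.functor.homotopical.properties} and the corollary following the Joyal--Tierney totalisation theorem, $\tau_1 \dashv \Nv$ is a Quillen adjunction with respect to the natural model structure on $\cat{\Cat}$ and the Rezk model structure on $\cat{\SSSet}$; since $\tau_1$ is left Quillen and every bisimplicial set is cofibrant, Ken Brown's lemma shows that $\tau_1$ sends every weak CSS equivalence to a categorical equivalence. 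Chaining these observations yields claim (iii).

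I do not expect any genuine obstacle, as the whole argument parallels the quasicategorical corollary and relies only on machinery already established. The points needing the most care are merely bookkeeping: identifying $\cat{\CSS}$ with the fibrant subcategory $\mathcal{M}_\mathrm{f}$ of the Rezk structure so that \autoref{prop:Hurewicz.homotopy.category} applies, and reading the localisation in (ii) as being taken at the weak CSS equivalences. Once these identifications are in place, the proof reduces to a single appeal to \autoref{prop:Hurewicz.homotopy.category}, supplemented for (iii) by the fact that $\tau_1$ preserves weak equivalences.
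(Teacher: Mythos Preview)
Your proposal is correct and follows the same approach as the paper, whose entire proof reads ``Apply \autoref{prop:Hurewicz.homotopy.category}.'' You have simply unpacked that one line, correctly identifying $\cat{\CSS}$ with $\mathcal{M}_\mathrm{f}$ for the Rezk model structure and supplying, for claim (iii), the additional observation (left implicit in the paper) that $\tau_1$ preserves weak equivalences via Ken Brown's lemma.
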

\begin{proof}
Apply \autoref{prop:Hurewicz.homotopy.category}.
\end{proof}

\begin{thm}[Rezk]
\label{thm:DK.equivalences.and.weak.CSS.equivalences}
Let $F : X \to Y$ be a morphism between Segal spaces. $F$ is a Dwyer--Kan equivalence between Segal spaces if and only if $F$ is a weak CSS equivalence.
\end{thm}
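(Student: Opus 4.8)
The plan is to reduce everything to the case of complete Segal spaces, where the two notions of equivalence can be compared directly, and to bridge the general case using Rezk's completion. Recall that the weak CSS equivalences are precisely the weak equivalences of the Rezk model structure and that the complete Segal spaces are its fibrant objects. Given a Segal space $X$, let $\eta_X : X \to \hat{X}$ be a fibrant replacement in the Rezk model structure, so that $\hat{X}$ is a complete Segal space and $\eta_X$ is a trivial cofibration; by naturality any $F : X \to Y$ fits into a commuting square with a map $\hat{F} : \hat{X} \to \hat{Y}$ together with $\eta_X$ and $\eta_Y$. The strategy has three ingredients: (A) for a map between complete Segal spaces, Dwyer--Kan equivalence, vertical weak homotopy equivalence, and weak CSS equivalence all coincide; (B) each completion map $\eta_X$ is simultaneously a Dwyer--Kan equivalence and a weak CSS equivalence; and (C) both classes of maps satisfy the two-out-of-three property. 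Granting these, $F$ is a Dwyer--Kan equivalence iff $\hat{F}$ is (by (B) and (C)), iff $\hat{F}$ is a weak CSS equivalence (by (A)), iff $F$ is one (again by (B) and (C)).

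For ingredient (A), first note that since the Rezk model structure is a left Bousfield localisation of the vertical Reedy model structure, a map between complete (hence Rezk-fibrant) Segal spaces is a weak CSS equivalence if and only if it is a vertical weak homotopy equivalence. A vertical weak homotopy equivalence $F$ is a Dwyer--Kan equivalence: the induced $\tau_1 F$ is a categorical equivalence because $\tau_1$ is left Quillen and so preserves vertical weak homotopy equivalences (Ken Brown's lemma, all objects being cofibrant), while the square exhibiting $X \argp{x_0, x_1}$ as a pullback of $X_{1, \bullet} \to X_{0, \bullet} \times X_{0, \bullet}$ — a Kan fibration by vertical Reedy fibrancy (\autoref{lem:vertically.Reedy-fibrant.bisimplicial.sets}) — is a homotopy pullback, so a levelwise equivalence induces weak equivalences on hom-spaces. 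The reverse implication, that a Dwyer--Kan equivalence between complete Segal spaces is a vertical weak homotopy equivalence, is the delicate part of (A) and is where completeness is used essentially. I would argue column by column. For the $0$th column, \autoref{lem:equivalences.and.homotopies} identifies $\pi_0 X_{0, \bullet}$ with the set of isomorphism classes of objects of $\tau_1 X$, so a Dwyer--Kan equivalence is bijective on $\pi_0$; for the higher homotopy groups I would use completeness in the form $X_{0, \bullet} \xrightarrow{\sim} X_\mathrm{eq}$ (equivalently \autoref{lem:completeness.and.univalence}) together with the long exact sequence of the Kan fibration $X_\mathrm{eq} \to X_{0, \bullet} \times X_{0, \bullet}$ to identify $\pi_{k+1}\argp{X_{0, \bullet}}$ with the homotopy of the space of self-equivalences sitting inside $X \argp{x, x}$, which a Dwyer--Kan equivalence preserves. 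Once $X_{0, \bullet} \to Y_{0, \bullet}$ is known to be a weak equivalence, the Segal condition $X \argp{x_0, \ldots, x_n} \simeq X \argp{x_0, x_1} \times \cdots \times X \argp{x_{n-1}, x_n}$ lets me bootstrap to the higher columns, viewing each $X_{n, \bullet}$ as the total space of a fibration over $X_{0, \bullet}^{\times(n+1)}$ with the hom-spaces as fibres.

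Ingredient (C) is immediate for weak CSS equivalences, as they are the weak equivalences of a model category. For Dwyer--Kan equivalences it requires a little care, since completion is not surjective on objects: the essential point is that the weak homotopy type of $X \argp{x_0, x_1}$ depends only on the isomorphism classes of $x_0$ and $x_1$ in $\tau_1 X$ (composition with an equivalence inducing a weak equivalence of hom-spaces, in the spirit of \autoref{lem:connections.in.Segal.spaces}), so that essential surjectivity of $\tau_1 F$ suffices to propagate hom-space equivalences to all pairs of objects.

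The main obstacle is ingredient (B), and specifically the claim that $\eta_X$ induces weak equivalences on hom-spaces. That $\eta_X$ is a weak CSS equivalence is automatic, and that $\tau_1 \eta_X$ is a categorical equivalence follows from the Quillen adjunction $\tau_1 \dashv \Nv$ for the Rezk model structure established above, since a left Quillen functor sends the trivial cofibration $\eta_X$ to a trivial cofibration in $\cat{\Cat}$, hence to a categorical equivalence. What remains — that completion preserves the hom-spaces up to weak equivalence — is the genuine content of the theorem; I would prove it by analysing the explicit completion functor of \citep{Rezk:2001}, or alternatively by transporting the computation through the quasicategorical model via the Joyal--Tierney equivalence $t_! \dashv t^!$ of \autoref{thm:Joyal-Tierney.totalisation} and comparing mapping spaces there. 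This is the step I expect to consume most of the work, and I would lean on Rezk's detailed verification for it.
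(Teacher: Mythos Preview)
The paper does not prove this theorem: the proof is marked with the open \textrmup{QED} symbol (\openqedsymbol) and consists of a bare reference to Theorem~7.7 of \citep{Rezk:2001}. Your proposal therefore does strictly more than the paper does, and what you have written is essentially a faithful outline of Rezk's original argument: reduce to complete Segal spaces via a fibrant replacement (Rezk's completion), show that for complete Segal spaces the three notions of equivalence coincide, and use two-out-of-three.

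Your outline is correct in its broad strokes, and you have correctly located the real content. A few remarks. Ingredient (A) is fine; the argument you sketch for the $0$th column via $X_{0,\bullet} \simeq X_\mathrm{eq}$ and the long exact sequence of the fibration $X_\mathrm{eq} \to X_{0,\bullet} \times X_{0,\bullet}$ is exactly how it goes, and the bootstrap to higher columns via the Segal condition and homotopy pullbacks is standard. Ingredient (C) is handled correctly; your observation that one needs the hom-spaces to be invariant under isomorphism in $\tau_1 X$ in order to get two-out-of-three for Dwyer--Kan equivalences is a genuine point that is sometimes glossed over. For ingredient (B), you are right that showing $\eta_X$ induces weak equivalences on hom-spaces is where the work is; note only that in Rezk's paper this is proved \emph{en route} to Theorem~7.7 itself, so appealing to ``Rezk's detailed verification'' here is, in effect, appealing to the very theorem you are proving. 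That is not a mathematical error --- the paper under review does literally the same thing by citing Rezk wholesale --- but if your goal were an independent proof you would need to actually carry out the analysis of the explicit completion (Rezk's \S 14), or else make the Joyal--Tierney detour you mention precise.
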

\begin{proof} \openproof
See Theorem 7.7 in \citep{Rezk:2001}.
\end{proof}

\begin{cor}
\label{cor:equivalences.in.CSS}
Let $F : X \to Y$ be a morphism between complete Segal spaces. The following are equivalent:
\begin{enumerate}[(i)]
\item $F$ is a Dwyer--Kan equivalence.

\item $F$ is a weak CSS equivalence.

\item $F$ is an equivalence in the 2-category $\tau_1 \argb{\ul{\cat{\CSS}}}$.
\end{enumerate}
\end{cor}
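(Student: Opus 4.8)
The plan is to obtain the corollary as the complete-Segal-space counterpart of the quasicategorical development of Section 2. The equivalence (i) \iff (ii) needs no work beyond \autoref{thm:DK.equivalences.and.weak.CSS.equivalences}: a complete Segal space is in particular a Segal space, so a morphism between complete Segal spaces is a Dwyer--Kan equivalence precisely when it is a weak CSS equivalence. The substance of the corollary is therefore the equivalence (ii) \iff (iii), which is the exact analogue of \autoref{lem:weak.categorical.equivalences} and the remark following it, with the Rezk model structure replacing the Joyal one.

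The key observation that makes the quasicategorical argument transcribe is that the Rezk model structure is a cartesian model structure in which every object is cofibrant, its cofibrations being exactly the monomorphisms. Consequently both \autoref{prop:Hurewicz.homotopy.category} and \autoref{prop:Quillen.homotopy.category.as.a.reflective.subcategory} apply with $\mathcal{M} = \cat{\SSSet}$ and $\mathcal{M}_\mathrm{f} = \cat{\CSS}$; and since $\tau_1 : \cat{\SSSet} \to \cat{\Cat}$ preserves finite products (\autoref{prop:classifying.diagram.functor.properties}) and $\cat{\CSS}$ is cartesian closed, the 2-category $\tau_1 \argb{\ul{\cat{\CSS}}}$ is a well-defined transport of the cartesian closed structure of $\cat{\CSS}$, just as $\bicat{\Qcat} = \tau_1 \argb{\ul{\cat{\Qcat}}}$ was.

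With this in place the argument is formal. By \autoref{prop:Quillen.homotopy.category.as.a.reflective.subcategory}(ii), the morphism $F$ is a weak CSS equivalence if and only if the maps $\tau_0 \xHom{F}{Z} : \tau_0 \xHom{Y}{Z} \to \tau_0 \xHom{X}{Z}$ are bijections for every complete Segal space $Z$. On the other hand, a 1-morphism of a 2-category is an equivalence precisely when it becomes invertible in the category obtained from it by passing to isomorphism classes of 1-morphisms; for $\tau_1 \argb{\ul{\cat{\CSS}}}$ this category is $\tau_0 \argb{\ul{\cat{\CSS}}}$, which \autoref{prop:Hurewicz.homotopy.category} identifies with $\Ho \cat{\CSS}$. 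By the Yoneda lemma, $F$ becomes invertible in $\tau_0 \argb{\ul{\cat{\CSS}}}$ if and only if $\tau_0 \xHom{F}{Z}$ is a bijection for all complete Segal spaces $Z$. Combining the three biconditionals gives (ii) \iff (iii).

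The step I expect to require the most care --- and which is the only genuinely non-formal point --- is the identification just used of the underlying 1-category of the 2-category $\tau_1 \argb{\ul{\cat{\CSS}}}$, built from the classifying-diagram functor $\tau_1 : \cat{\SSSet} \to \cat{\Cat}$, with the model-theoretic $\tau_0 \argb{\ul{\cat{\CSS}}}$ of Section 1. On objects both hom-sets are the set of connected components of the relevant mapping space: \autoref{prop:truncation.as.homotopy.category} identifies $\tau_1 W$ with the Rezk homotopy category $\Ho W$ of a Segal space $W$, and \autoref{lem:equivalences.and.homotopies} shows that isomorphism in $\tau_1 W$, lying in a common connected component of $W_{0, \bullet}$, and the left-homotopy relation on maps $\Delta^0 \boxtimes \Delta^0 \to W$ all coincide; the last of these is exactly what computes the model-theoretic $\tau_0 W$. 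Checking that this bijection is compatible with composition then makes the two 1-categories coincide, and the remainder of the proof is a transcription of \autoref{lem:weak.categorical.equivalences}.
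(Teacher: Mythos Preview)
Your proposal is correct and follows essentially the same route as the paper. The only organisational difference is that the paper proves (ii) $\Rightarrow$ (iii) directly (cartesian model structure gives $\xHom{F}{Z}$ a weak CSS equivalence, then Ken Brown and the left-Quillen property of $\tau_1$ make $\tau_1 \xHom{F}{Z}$ a categorical equivalence), reserving the identification $\Ho \cat{\CSS} \cong \tau_0 \argb{\tau_1 \argb{\ul{\cat{\CSS}}}}$ via \autoref{prop:Hurewicz.homotopy.category} and \autoref{lem:equivalences.and.homotopies} for the converse (iii) $\Rightarrow$ (ii); you instead run both directions through \autoref{prop:Quillen.homotopy.category.as.a.reflective.subcategory}(ii) and that same identification, which is equally valid and arguably tidier. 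Your final paragraph correctly isolates the one genuinely non-formal step and points to exactly the lemmas the paper invokes for it.
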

\begin{proof}
(i) \iff (ii). This is a special case of \autoref{thm:DK.equivalences.and.weak.CSS.equivalences}.

\bigskip\noindent
(ii) \implies (iii). If $F : X \to Y$ is a weak CSS equivalence, then the morphism $\xHom{F}{Z} : \xHom{Y}{Z} \to \xHom{X}{Z}$ is a weak CSS equivalence for all complete Segal spaces $Z$. Ken Brown's lemma then implies $\tau_1 \xHom{F}{Z} : \tau_1 \xHom{Y}{Z} \to \tau_1 \xHom{X}{Z}$ is a categorical equivalence for all complete Segal spaces $Z$. Hence, $F$ is an equivalence in $\tau_1 \argb{\ul{\cat{\CSS}}}$.

\bigskip\noindent
(iii) \implies (ii). Let $\tau_0 : \cat{\Cat} \to \cat{\Set}$ be the functor that sends a small category $\mathcal{C}$ to the set of isomorphism classes of objects in $\mathcal{C}$. \Autoref{prop:Hurewicz.homotopy.category} and  \autoref{lem:equivalences.and.homotopies} together imply that $\Ho \cat{\CSS} \cong \tau_0 \argb{\tau_1 \argb{\ul{\cat{\CSS}}}}$, so a morphism $F : X \to Y$ in $\cat{\CSS}$ becomes an equivalence in $\tau_1 \argb{\ul{\cat{\CSS}}}$ if and only if $F$ becomes an isomorphism in $\Ho \cat{\CSS}$, \ie if and only if $F$ is a weak CSS equivalence.
\end{proof}

Finally, we are able to prove the claim announced in the introduction:

\begin{thm}
\label{thm:bicategorical.comparison.for.CSS}
\needspace{2.5\baselineskip}
Let $\bicat{\CSS}$ be the 2-category defined by $\tau_1 \argb{\ul{\cat{\CSS}}}$.
\begin{enumerate}[(i)]
\item The functor $t^! : \cat{\SSet} \to \cat{\SSSet}$ restricts to a functor $t^! : \cat{\Qcat} \to \cat{\CSS}$ that preserves small products and weak equivalences.

\item For all quasicategories $X$ and $Y$, the canonical morphism
\[
t^! \xHom{X}{Y} \to \xHom{t^! X}{t^! Y}
\]
is a weak CSS equivalence.

\item Let $\ul{\mathcal{K}}$ be the $\cat{\CSS}$-enriched category $t^! \argb{\ul{\cat{\Qcat}}}$ and let $\ul{t^!} : \ul{\mathcal{K}} \to \ul{\cat{\CSS}}$ be the canonical $\cat{\CSS}$-enrichment of $t^!$. The induced 2-functor
\[
\tau_1 \argb{\ul{t^!}} : \tau_1 \argb{\ul{\mathcal{K}}} \to \tau_1 \argb{\ul{\cat{\CSS}}}
\]
is a bicategorical equivalence.

\item The 2-functor $\tau_1 \argb{\ul{\mathcal{K}}} \to \bicat{\Qcat}$ induced by the adjunction counit $t_! t^! \hoto \id_{\cat{\SSet}}$ is an isomorphism of 2-categories.

\item The functor $t^! : \cat{\Qcat} \to \cat{\CSS}$ underlies a bicategorical equivalence $\bicat{\Qcat} \to \bicat{\CSS}$, and moreover $t^! \nv \cong \Nv$.
\end{enumerate}
\end{thm}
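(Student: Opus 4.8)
The plan is to treat (i) and (ii) as the substantive input and to derive (iii)--(v) formally from them, with (ii) being the sole genuine obstacle. Claim (i) is immediate from \autoref{thm:Joyal-Tierney.totalisation}: as the right adjoint of the Quillen equivalence $t_! \dashv t^!$, the functor $t^!$ preserves all small limits (hence small products) and all fibrant objects; since the fibrant objects of the Joyal and Rezk model structures are precisely the quasicategories and the complete Segal spaces, $t^!$ restricts to a functor $\cat{\Qcat} \to \cat{\CSS}$, and Ken Brown's lemma shows it carries weak categorical equivalences between quasicategories (weak equivalences between fibrant objects) to weak CSS equivalences.

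For (ii), I first observe that both $t^! \xHom{X}{Y}$ and $\xHom{t^! X}{t^! Y}$ are complete Segal spaces: the former by (i), since $\xHom{X}{Y}$ is a quasicategory, and the latter because $\cat{\CSS}$ is closed under exponentials in $\cat{\SSSet}$. It therefore suffices to show that the canonical comparison $c \colon t^! \xHom{X}{Y} \to \xHom{t^! X}{t^! Y}$ is inverted in $\Ho \cat{\CSS}$, and I would establish this by a Yoneda comparison in the homotopy category. For an arbitrary complete Segal space $A$, the Quillen adjunction and the cartesian closed structure identify the maps $A \to t^! \xHom{X}{Y}$ in $\Ho \cat{\CSS}$ with the maps $\parens{t_! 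A} \times X \to Y$ in $\Ho \cat{\Qcat}$; on the other side, the maps $A \to \xHom{t^! X}{t^! Y}$ are identified with the \emph{same} set, using the derived counit $t_! t^! X \simeq X$ of the Quillen equivalence together with the fact that $t_!$ preserves finite products up to weak equivalence. Naturality in $A$ then forces $c$ to be invertible in $\Ho \cat{\CSS}$. \textbf{The main obstacle is exactly this product-preservation property of $t_!$}, which is the one substantive homotopy-theoretic ingredient and is not formal. As a consistency check, if $X = \nv{\mathcal{C}}$ and $Y = \nv{\mathcal{D}}$ then $c$ is even an isomorphism, since $t^! \nv \cong \Nv$ by \autoref{thm:Joyal-Tierney.totalisation}, $\Nv$ is cartesian closed by \autoref{thm:classifying.diagram.functor.homotopical.properties}, and the nerve is cartesian closed.

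Granting (ii), claim (iii) becomes formal. The hom-category component of $\tau_1 \argb{\ul{t^!}}$ from $X$ to $Y$ is $\tau_1 \argp{c} \colon \tau_1 t^! \xHom{X}{Y} \to \tau_1 \xHom{t^! X}{t^! Y}$; since $\tau_1$ is left Quillen for the Rezk structure and every bisimplicial set is cofibrant, $\tau_1$ sends weak CSS equivalences to categorical equivalences, so by (ii) each such component is a categorical equivalence and $\tau_1 \argb{\ul{t^!}}$ is a local equivalence. For essential surjectivity, given any complete Segal space $Z$ let $X$ be a quasicategorical (Joyal-fibrant) replacement of $t_! Z$; the derived unit $Z \to t^! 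X$ is then a weak CSS equivalence, so $Z$ is equivalent to $t^! X$ in $\bicat{\CSS}$ by \autoref{cor:equivalences.in.CSS}. A 2-functor that is both a local equivalence and essentially surjective on objects is a bicategorical equivalence.

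Claim (iv) rests on the identity $\tau_1 \cong \tau_1 \circ t_!$ (both functors preserve colimits and agree on the representables $\Delta^n \boxtimes \Delta^m$, using $\tau_1 \nv \cong \id$ and that $\tau_1$ preserves finite products). Under this identity the hom-category $\tau_1 t^! \xHom{X}{Y}$ of $\tau_1 \argb{\ul{\mathcal{K}}}$ becomes $\tau_1 t_! t^! \xHom{X}{Y}$, and the counit induces $\tau_1 t_! t^! \xHom{X}{Y} \to \tau_1 \xHom{X}{Y}$, which is an \emph{isomorphism} of categories by \autoref{cor:truncation.and.totalisation} applied to the quasicategory $\xHom{X}{Y}$. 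The induced 2-functor is thus the identity on objects and an isomorphism on every hom-category; compatibility with composition follows from the naturality of the counit together with the product-preservation of $\tau_1$ and $t^!$, so it is an isomorphism of 2-categories. Finally, (v) is obtained by composing the inverse of the isomorphism of (iv) with the bicategorical equivalence of (iii): the result is a bicategorical equivalence $\bicat{\Qcat} \to \bicat{\CSS}$ acting as $X \mapsto t^! X$ on objects and with underlying functor $t^!$, while the remaining identity $t^! \nv \cong \Nv$ is precisely part (ii) of \autoref{thm:Joyal-Tierney.totalisation}.
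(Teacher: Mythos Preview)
Your argument is correct and agrees with the paper for (i), (iii), (iv), and (v); the only real divergence is in (ii). The paper argues more directly: since $t_! \dashv t^!$ is a Quillen equivalence, the induced functor $\Ho t^! : \Ho \cat{\Qcat} \to \Ho \cat{\CSS}$ is an equivalence of categories, and since both homotopy categories are cartesian closed with exponentials computed by the point-set exponential on fibrant objects (\autoref{prop:Hurewicz.homotopy.category}), any equivalence between them automatically preserves exponentials. Hence the image of $c$ in $\Ho \cat{\CSS}$ is an isomorphism, and $c$ is a weak CSS equivalence.

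Your Yoneda argument reaches the same conclusion but routes it through the left adjoint, and you flag the product-preservation of the derived $t_!$ as ``the one substantive homotopy-theoretic ingredient'' that ``is not formal''. In fact it \emph{is} formal: $\Ho t_!$ is quasi-inverse to the product-preserving equivalence $\Ho t^!$, and any equivalence of categories preserves whatever products exist. So your obstacle dissolves and what remains is essentially a longer restatement of the paper's one-line argument, together with an extra naturality check (that the bijection you build is really the one induced by $c$). The paper's phrasing is shorter because it stays on the right-adjoint side and appeals directly to the cartesian closed structure of the homotopy categories; your version has the minor virtue of making explicit the identification $\tau_1 \cong \tau_1 \circ t_!$ that the paper uses implicitly in (iv).
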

\begin{proof}
(i). Since $t^! : \cat{\SSet} \to \cat{\SSSet}$ is a right Quillen functor with respect to the Joyal model structure for quasicategories and the Rezk model structure for complete Segal spaces, $t^!$ must map quasicategories to complete Segal spaces. The preservation of small products is automatic because $t^!$ is a right adjoint and $\cat{\Qcat}$ is closed in $\cat{\SSet}$ under small products, and Ken Brown's lemma implies $t^!$ must preserve weak equivalences.

\bigskip\noindent
(ii). Since $\Ho t^! : \Ho \cat{\Qcat} \to \Ho \cat{\CSS}$ is a categorical equivalence, the canonical morphism $t^! \xHom{X}{Y} \to \xHom{t^! X}{t^! Y}$ must be an isomorphism in $\Ho \cat{\CSS}$, and therefore the corresponding morphism in $\cat{\CSS}$ must be a weak CSS equivalence.

\bigskip\noindent
(iii). We already know that $\tau_1 \argb{\ul{t^!}}$ induces equivalences of hom-categories, so it suffices to establish that the underlying $\parens{\Ho \cat{\Cat}}$-enriched functor is essentially surjective on objects. This in turn is a straightforward consequence of \autoref{cor:equivalences.in.CSS} and the fact that $t^! : \cat{\SSet} \to \cat{\SSSet}$ is half of a Quillen equivalence. 

\bigskip\noindent
(iv). By \autoref{cor:truncation.and.totalisation}, the functors $\tau_1 t_! t^! \xHom{X}{Y} \to \tau_1 \xHom{X}{Y}$ are isomorphisms for all quasicategories $X$ and $Y$, so the induced 2-functor $\tau_1 \argb{\ul{\mathcal{K}}} \to \bicat{\Qcat}$ is an isomorphism of 2-categories.

\bigskip\noindent
(v). This follows from claims (iii) and (iv).
\end{proof}

To conclude, we make note of a small generalisation of the above argument. Recall the following result of Toën:

\begin{thm}
\label{thm:Toen}
Let $\mathcal{M}$ be a combinatorial simplicial cartesian model category in which all objects are cofibrant and let $C^{\bullet}$ be a cosimplicial object in $\mathcal{M}$. If $\tuple{\mathcal{M}, C^{\bullet}}$ is a theory of $\tuple{\infty, 1}$-categories in the sense of Toën, then there exists a Quillen equivalence of the form below,
\[
R \dashv S : \mathcal{M} \to \cat{\SSSet}
\]
where $\cat{\SSSet}$ is equipped with the Rezk model structure for complete Segal spaces and $S$ is defined by the formula below:
\[
\parens{S M}_{n, \bullet} = \ulHom[\mathcal{M}]{C^n}{M}
\]
\end{thm}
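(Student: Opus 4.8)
The plan is to recognise $S$ as the nerve associated to the cosimplicial object $C^{\bullet}$, to produce its left adjoint $R$ by cocontinuous extension, and then to check in turn that $R \dashv S$ is Quillen for the vertical Reedy structure, that $S$ lands in complete Segal spaces, and finally --- the substantial step --- that the resulting Quillen adjunction into the Rezk structure is an equivalence.

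First I would construct $R$. Because $\mathcal{M}$ is combinatorial it is cocomplete, and $\cat{\SSSet}$ is the presheaf category on $\cat{\Simplex} \times \cat{\Simplex}$, so a cocontinuous functor out of $\cat{\SSSet}$ is determined up to isomorphism by its values on representables. Unwinding the simplicial enrichment gives $\parens{S M}_{n, m} = \Hom[\mathcal{M}]{C^n \otimes \Delta^m}{M}$, where $\otimes$ is the tensoring of $\mathcal{M}$ over $\cat{\SSet}$; by the Yoneda lemma this forces $R \parens{\Delta^n \boxtimes \Delta^m} \cong C^n \otimes \Delta^m$, and left Kan extension along the Yoneda embedding supplies a cocontinuous $R$ with $R \dashv S$.

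Next I would view $S M$ as the simplicial space $\bracket{n} \mapsto \ulHom[\mathcal{M}]{C^n}{M}$, so that $S$ is exactly the nerve determined by the cosimplicial resolution $C^{\bullet}$. Provided $C^{\bullet}$ is Reedy cofibrant --- which may be arranged by a Reedy-cofibrant replacement without changing the derived functors --- the standard theory of cosimplicial resolutions shows that $R \dashv S$ is a Quillen adjunction with respect to the vertical Reedy model structure, where I use crucially that $\mathcal{M}$ is simplicial and that all its objects are cofibrant. I would then check that $S$ carries fibrant objects of $\mathcal{M}$ to complete Segal spaces: the Segal condition on $S M$ translates into the requirement that the spine $C^1 \cup_{C^0} \cdots \cup_{C^0} C^1$ maps to $C^n$ by a weak equivalence, and completeness of $S M$ translates into the completeness (interval) axiom on $C^{\bullet}$; both are precisely the conditions packaged into Toën's definition. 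Since the Rezk structure is the left Bousfield localisation of the vertical Reedy structure at the complete Segal spaces, the universal property of left Bousfield localisations then upgrades $R \dashv S$ to a Quillen adjunction into the Rezk model structure.

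The hard part is the last step: showing this Quillen adjunction is a \emph{Quillen equivalence}, which is the genuine content of Toën's comparison theorem. I would argue, on the one hand, that $S$ reflects weak equivalences between fibrant objects, and, on the other, that the derived unit $X \to \mathbf{R} S\, \mathbf{L} R\, X$ is a weak equivalence --- first on each generating object $\Delta^n \boxtimes \Delta^m$, where it reduces to a computation with $C^{\bullet}$, and then on all objects by a homotopy-colimit argument, since every bisimplicial set is canonically built from the representables. Both assertions rest on the axiom that $C^{\bullet}$ generates $\Ho \mathcal{M}$ as the homotopy theory of $\tuple{\infty, 1}$-categories, so that $\mathbf{L} R$ and $\mathbf{R} S$ become mutually inverse equivalences; this is the point at which the full strength of the axioms is consumed. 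Because the verification depends delicately on their exact formulation, I would at this stage cite the comparison theorem of \citep{Toen:2005} rather than reproduce its proof.
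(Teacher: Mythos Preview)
Your proposal is reasonable and indeed goes further than the paper itself: the paper's proof of this theorem consists solely of the citation ``See Th\'eor\`eme 5.1 in \citep{Toen:2005}'' with no sketch whatsoever. Your outline of the construction of $R$ by left Kan extension, the verification that $R \dashv S$ is Reedy--Quillen via cosimplicial resolutions, the passage to the Rezk localisation, and the deferral of the equivalence step to To\"en's comparison theorem is a faithful summary of how that argument actually runs, and is more informative than what the paper provides. Since you ultimately cite To\"en for the substantive step just as the paper does, there is no discrepancy in approach --- only in level of detail, and in your favour.
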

\begin{proof} \openproof
See Théorème 5.1 in \citet{Toen:2005}.
\end{proof}

\begin{prop}
Let $\mathcal{M}$ be a cartesian model category in which all fibrant objects are cofibrant, let $\mathcal{M}_\mathrm{f}$ be the full subcategory of fibrant objects, and let
\[
R \dashv S : \mathcal{M} \to \cat{\SSSet}
\]
be a Quillen equivalence between $\mathcal{M}$ and the Rezk model structure for complete Segal spaces.
\begin{enumerate}[(i)]
\item The functor $S : \mathcal{M} \to \cat{\SSSet}$ restricts to a functor $S : \mathcal{M}_\mathrm{f} \to \cat{\CSS}$ that preserves small products and weak equivalences.

\item For all fibrant objects $X$ and $Y$ in $\mathcal{M}$, the canonical morphism
\[
S \xHom{X}{Y} \to \xHom{S X}{S Y}
\]
is a weak CSS equivalence.

\item Let $\ul{\mathcal{K}}$ be the $\cat{\CSS}$-enriched category $S \argb{\ul{\mathcal{M}_\mathrm{f}}}$ and let $\ul{S} : \ul{\mathcal{K}} \to \ul{\cat{\CSS}}$ be the canonical $\cat{\CSS}$-enrichment of $S$. The induced 2-functor
\[
\tau_1 \argb{\ul{S}} : \tau_1 \argb{\ul{\mathcal{K}}} \to \tau_1 \argb{\ul{\cat{\CSS}}}
\]
is a bicategorical equivalence.
\end{enumerate}
\end{prop}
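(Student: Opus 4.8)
The plan is to follow the proof of \autoref{thm:bicategorical.comparison.for.CSS} essentially verbatim, with the right Quillen functor $S$ playing the role of $t^!$ and the full subcategory $\mathcal{M}_\mathrm{f}$ playing the role of $\cat{\Qcat}$. The hypothesis that every fibrant object of $\mathcal{M}$ is cofibrant is precisely what legitimises this substitution: it forces $\mathcal{M}_\mathrm{f}$ to consist of cofibrant--fibrant objects, so that \autoref{prop:Hurewicz.homotopy.category} applies and guarantees that $\mathcal{M}_\mathrm{f}$ is a cartesian closed category whose homotopy category is obtained by the transport-of-structure construction, exactly as was the case for $\cat{\Qcat}$. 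In particular $\xHom{X}{Y}$ is again fibrant whenever $X$ and $Y$ are, so all the exponentials appearing below live in $\mathcal{M}_\mathrm{f}$.

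For claim (i), I would argue as for part (i) of \autoref{thm:bicategorical.comparison.for.CSS}: as the right adjoint of a Quillen equivalence, $S$ is a right Quillen functor, hence carries fibrant objects to fibrant objects, \ie complete Segal spaces to complete Segal spaces; being a right adjoint it preserves all small products, and since $\cat{\CSS}$ is closed under small products in $\cat{\SSSet}$ these products remain complete Segal spaces; and Ken Brown's lemma shows $S$ preserves weak equivalences between cofibrant--fibrant objects, hence between all objects of $\mathcal{M}_\mathrm{f}$.

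For claim (ii), the key input is that $\Ho S : \Ho \mathcal{M}_\mathrm{f} \to \Ho \cat{\CSS}$ is a categorical equivalence, which holds because $R \dashv S$ is a Quillen equivalence and $\Ho \mathcal{M}_\mathrm{f} \cong \Ho \mathcal{M}$. Both homotopy categories are cartesian closed, and an equivalence between cartesian closed categories preserves exponential objects; consequently the canonical comparison becomes an isomorphism in $\Ho \cat{\CSS}$. Since $S \xHom{X}{Y}$ and $\xHom{S X}{S Y}$ are both complete Segal spaces (the former by claim (i), the latter since $\cat{\CSS}$ is closed under exponential objects), a morphism between them that is inverted in $\Ho \cat{\CSS}$ must be a weak CSS equivalence. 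I expect the main obstacle here to be the bookkeeping needed to identify the concrete, model-level morphism $S \xHom{X}{Y} \to \xHom{S X}{S Y}$ with the abstract comparison isomorphism furnished by $\Ho S$; this is where one must invoke that $\gamma \xHom{X}{Y} \cong \xHom{\gamma X}{\gamma Y}$ for cofibrant--fibrant $X$ and $Y$ to see that the two agree up to the relevant weak equivalences.

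Finally, for claim (iii), the 2-functor $\tau_1 \argb{\ul{S}}$ acts on the hom-category from $X$ to $Y$ as $\tau_1$ applied to the canonical morphism $S \xHom{X}{Y} \to \xHom{S X}{S Y}$; by claim (ii) this is a weak CSS equivalence, and since all bisimplicial sets are cofibrant in the Rezk model structure, Ken Brown's lemma shows that $\tau_1$ sends it to a categorical equivalence. Hence $\tau_1 \argb{\ul{S}}$ induces equivalences of hom-categories, and it only remains to check that the underlying $\parens{\Ho \cat{\Cat}}$-enriched functor is essentially surjective on objects. This follows as in \autoref{thm:bicategorical.comparison.for.CSS}: because $S$ is half of a Quillen equivalence, every complete Segal space is weakly CSS equivalent to some $S X$ with $X$ in $\mathcal{M}_\mathrm{f}$, and by \autoref{cor:equivalences.in.CSS} such a weak CSS equivalence is an equivalence in the 2-category $\tau_1 \argb{\ul{\cat{\CSS}}}$.
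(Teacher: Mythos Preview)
Your proposal is correct and follows exactly the approach the paper takes: the paper's own proof consists of the single sentence ``The arguments in the proof of \autoref{thm:bicategorical.comparison.for.CSS} also work here,'' and you have faithfully unfolded that cross-reference with $S$ in place of $t^!$ and $\mathcal{M}_\mathrm{f}$ in place of $\cat{\Qcat}$. One tiny remark: in part (i), Ken Brown's lemma for a right Quillen functor already gives preservation of weak equivalences between \emph{fibrant} objects, so cofibrancy is not actually needed at that step (it is needed elsewhere, e.g.\ to ensure $\xHom{X}{Y}$ is fibrant and that \autoref{prop:Hurewicz.homotopy.category} applies).
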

\begin{proof}
The arguments in the proof of \autoref{thm:bicategorical.comparison.for.CSS} also work here.
\end{proof}

\begin{remark}
The underlying ordinary category of $\tau_1 \argb{\ul{\mathcal{K}}}$ need not be isomorphic to $\mathcal{M}_\mathrm{f}$; for this, it is necessary and sufficient that $S$ satisfy the hypotheses of \autoref{lem:enrichment.via.transport}. If we think of a morphism $1 \to X$ in $\mathcal{M}$ as being an object in the $\tuple{\infty, 1}$-category presented by $X$, this essentially amounts to demanding that $S$ preserve the set of objects.
\end{remark}

\begin{remark}
We can replace $\cat{\SSSet}$ (\resp $\cat{\CSS}$, \etc) in the proposition with $\cat{\SSet}$ (\resp $\cat{\Qcat}$, \etc). However, \citet{Joyal-Tierney:2007} have shown that there are Quillen equivalences between the two model categories in both directions, so it is also possible to formally reduce one version to the other by composing $R \dashv S$ with a suitable Quillen equivalence.
\end{remark}

\ifdraftdoc

\else
  \printbibliography
\fi

\end{document}